\newtheorem{thm}{Theorem}
\newtheorem{cor}[thm]{Corollary}
\newtheorem{lem}[thm]{Lemma}
\newtheorem{prop}[thm]{Proposition}
\newtheorem{rem}[thm]{Remark}
\theoremstyle{definition}
\newtheorem{algo}[thm]{Algorithm}
\numberwithin{equation}{section}
\newcommand{\ADGM}[1][]{\textsf{ADGM}#1\xspace}
\newcommand{\AFEM}[1][]{\textsf{AFEM}#1\xspace}
\newcommand{\Ao}{\ensuremath{\mathcal{A}}}
\newcommand{\Aon}[1][n]{\ensuremath{\Ao_{n}}}
\newcommand{\abs}[1]{\ensuremath{\left|#1\right|}}
\newcommand{\bi}[1][\grid]{\mathfrak{B}_{#1}}
\newcommand{\bilin}[3][\grid]{\ensuremath{\bi[#1][#2,\,#3]}}
\newcommand{\Cleq}{\ensuremath{\lesssim}}
\newcommand{\definedas}{\mathrel{:=}}
\newcommand{\dual}[2]{\ensuremath{\left\langle #1,\,#2\right\rangle}}
\newcommand{\dx}{\ensuremath{\,\mathrm{d}x\xspace}}
\newcommand{\ds}{\ensuremath{\,\mathrm{d}s\xspace}}
\DeclareMathOperator{\divo}{div}
\newcommand{\DGFEM}[1][]{\textsf{DGFEM}#1\xspace}
\newcommand{\elm}{\ensuremath{E}\xspace}
\newcommand{\enorm}[2][\grid]{\left|\negthinspace\left|\negthinspace\left|{#2}%
                      \right|\negthinspace\right|\negthinspace\right|_{#1}}
\newcommand{\est}{\mathcal{E}}
\newcommand{\ESTIMATE}{\textsf{ESTIMATE}}
\newcommand{\grids}{\ensuremath{\mathbb{G}}\xspace}
\newcommand{\grid}{\mathcal{G}}
\newcommand{\gridk}[1][k]{\grid_{#1}}
\newcommand{\helm}[1][\ell]{\ensuremath{h_{\elm}}}
\newcommand{\hk}[1][k]{\ensuremath{h_{#1}}}
\newcommand{\hG}[1][\grid]{\ensuremath{h_{#1}}}
\newcommand{\jump}[1]{\left[\negthinspace\left[{#1}\right]\negthinspace\right]}
\DeclareMathAlphabet{\lf}{OT1}{pzc}{m}{it}
\newcommand{\liftG}[1][\grid]{\ensuremath{L_{#1}}}
\newcommand{\liftS}[1][\grid]{\ensuremath{L_{#1}^\side}}
\newcommand{\LDG}{\textsf{LDG}\xspace}
\newcommand{\mean}[1]{\{ \kern -1.6mm \{#1\} \kern -1.6mm \}}
\newcommand{\marked}{\mathcal{M}}
\newcommand{\markedk}[1][k]{\marked_{#1}}
\newcommand{\MARK}{\textsf{MARK}\xspace}
\newcommand{\nablaG}[1][\grid]{\ensuremath{\nabla_{\!\texttt{pw}}}}
\newcommand{\nablak}[1][k]{\ensuremath{\nabla_{k}}}
\newcommand{\N}{\ensuremath{\mathbb{N}}}
\newcommand{\neigh}{\ensuremath{{N}}}
\newcommand{\neighk}[1][k]{\ensuremath{{N}_{#1}}}
\newcommand{\neighG}[1][\grid]{\ensuremath{{N}_{#1}}}
\newcommand{\nodes}{\ensuremath{\mathcal{N}}}
\newcommand{\normal}{\ensuremath{\vec{n}}}
\newcommand{\norm}[2][\Omega]{\ensuremath{\left\|#2\right\|_{#1}}}
\newcommand{\NIPG}{\textsf{NIPG}\xspace}
\DeclareMathOperator{\osc}{osc}
\newcommand{\omegaG}[1][\grid]{\ensuremath{\omega_{#1}}}
\newcommand{\omegak}[1][k]{\ensuremath{\omega_{#1}}}
\renewcommand{\P}{\ensuremath{\mathbb{P}}}
\renewcommand{\paragraph}[1]{\noindent\raisebox{0pt}[10pt][0pt]{\textbf{#1.}}}
\newcommand{\REFINE}{\textsf{REFINE}\xspace}
\newcommand{\R}{\ensuremath{\mathbb{R}}}
\newcommand{\riftG}[1][\grid]{\ensuremath{R_{#1}}}
\newcommand{\riftS}[1][\grid]{\ensuremath{R_{#1}^\side}}
\newcommand{\scp}[3][\Omega]{\ensuremath{\left\langle #2,\,#3\right\rangle}_{#1}}
\newcommand{\side}{\ensuremath{S}\xspace}
\newcommand{\sides}{\mathcal{S}}
\newcommand{\SIPG}{\textsf{SIPG}\xspace}
\newcommand{\SOLVE}{\textsf{SOLVE}}
\DeclareMathOperator{\supp}{supp}
\newcommand{\ud}{\mathrm{d}}
\newcommand{\uG}[1][\grid]{\ensuremath{u_{#1}}}
\newcommand{\uk}[1][k]{\ensuremath{u_{#1}}}
\renewcommand{\vec}[1]{\ensuremath{\boldsymbol{#1}}}
\newcommand{\V}{\ensuremath{\mathbb{V}}}
\newcommand{\Vk}[1][k]{\V_{#1}}
\newcommand{\VG}[1][\grid]{\V(#1)}
\begin{document}



\title[Convergence of \ADGM]{Convergence of adaptive \\discontinuous
  Galerkin methods\\{\mdseries\scriptsize (corrected version of [Math. Comp. 87 (2018), no.~314,~2611--2640])}}

\author[Ch.~Kreuzer]{Christian Kreuzer}
\address{Christian Kreuzer,
 Fakult\"at f\"ur Mathematik,
 Ruhr-Universit\"at Bochum,
 Universit\"atsstrasse 150, D-44801 Bochum, Germany
 }%
\urladdr{http://www.ruhr-uni-bochum.de/ffm/Lehrstuehle/Kreuzer/index.html}
\email{christan.kreuzer@rub.de}

\thanks{The research of Christian Kreuzer was supported by the DFG research grant KR 3984/5-1.}

\author[E.\,H.~Georgoulis]{Emmanuil H.~Georgoulis}
\address{Emmanuil H.~Georgoulis, Department of Mathematics,
University of Leicester,
University Road, 
Leicester, LE1 7RH,
United Kingdom and Department of Mathematics, School of Applied Mathematical and Physical Sciences, National Technical University of Athens, Zografou 157 80, Greece 
 }%
\urladdr{http://www.le.ac.uk/people/eg64}
\email{Emmanuil.Georgoulis@le.ac.uk}

\thanks{Emmanuil H.~Georgoulis acknowledges support by the Leverhulme Trust.}

\keywords{Adaptive discontinuous Galerkin methods, convergence, elliptic problems}

\subjclass[2010]{Primary 65N30, 65N12, 65N50, 65N15}

\begin{abstract} 
  We develop a general convergence theory for adaptive discontinuous
  Galerkin methods for elliptic PDEs covering the popular SIPG, NIPG
  and LDG schemes as
  well as all practically relevant marking strategies. Another key feature of the
  presented result is, that it holds for penalty parameters only necessary for
  the standard analysis of the respective scheme. The analysis is
  based on a quasi interpolation into a newly developed limit space
  of the adaptively created non-conforming discrete spaces, 
  which enables to generalise the basic convergence result for
  conforming adaptive finite element methods by Morin, Siebert, and
  Veeser [A basic convergence result for conforming adaptive finite
  elements, Math. Models Methods Appl. Sci., 2008, 18(5),
  707--737].
\end{abstract}

\date{\small\today}

\maketitle

\section{Introduction\label{s:introduction}}


 Discontinuous Galerkin finite element methods (\DGFEM) have 
 enjoyed considerable attention during the last two decades,
 especially in the context of adaptive algorithms (\ADGM[s]): the absence of any
 conformity requirements across element interfaces characterizing
 \DGFEM approximations allows for extremely general adaptive meshes
 and/or an easy implementation of variable local polynomial degrees in the
 finite element spaces.  There has been a substantial activity in
 recent years for 
the derivation of a posteriori bounds for discontinuous Galerkin
methods for elliptic problems 
\cite{KarakashianPascal:03,BeckerHansboLarson:2003,Ainsworth:07,HoustonSchoetzauWihler:07,CarstensenGudiJensen:09,ErnVohralik:09,ErnStephansenVohralik:10,ZhuGianiHouston:11,DiPietroErn:12}. 
Such a posteriori estimates are an essential building block in the
  context of adaptive algorithms, which typically consist of a loop 
\label{eq:SEMR}
\begin{equation}\label{SEMR}
  \textsf{SOLVE $\rightarrow$ ESTIMATE $\rightarrow$
    MARK  $\rightarrow$ REFINE}.
\end{equation}
The convergence theory, however, for the `extreme' non-conformity
case of \ADGM[s] had been a particularly challenging problem due to the
presence of a negative power of the mesh-size $h$ stemming from the
discontinuity-penalization term. As a consequence, the error is not necessarily
monotone under refinement. 
Indeed, consulting the unprecedented developments of convergence and
optimality theory of conforming adaptive finite element methods (\AFEM[s]) during
the last two decades, the strict reduction of some error quantity appears
to be fundamental for most of the results. 
In fact, D\"orfler's marking 
strategy typically ensures that the error is uniformly reduced in
each iteration~\cite{Doerfler:96,MoNoSi:00,MoNoSi:02}
and leads to optimal convergence
rates~\cite{Stevenson:07,CaKrNoSi:08,KreuzerSiebert:11,DieningKreuzer:08,BelenkiDieningKreuzer:12}; 
compare also with the 
monographs \cite{NoSiVe:09,CarstensenFeischlPraetorius:2014} and the 
references therein. Showing that 
the error reduction is proportional to the estimator on the refined
elements, instance
optimality of an adaptive finite element method was shown recently for
an \AFEM with modified marking strategy in
\cite{DieningKreuzerStevenson:2016,KreuzerSchedensack:2016}. 
A different approach was, however,  taken in \cite{MorinSiebertVeeser:08,Siebert:11}, where 
convergence of the \AFEM is proved, exploiting that the approximations
converge to a solution in the closure of the
adaptively created finite element spaces in the trial
space together with standard properties of the a posteriori bounds. 
The result covers a large class of inf-sup stable PDEs and all
practically relevant marking strategies without yielding convergence rates though.

Karakashian and Pascal \cite{KarakashianPascal:07} gave the first
proof of convergence for an adaptive \DGFEM based on a symmetric
interior penalty scheme (\SIPG) with D\"orfler marking for Poisson's problem. Their
proof 
addresses the
challenge of negative power of $h$ in the estimator, by showing that
the discontinuity-penalization term can be controlled by the element
and jump residuals only, provided that the \DGFEM
discontinuity-penalisation parameter, henceforth denoted by $\sigma$,
is chosen to be \emph{sufficiently large}; the element and jump
residuals involve only positive powers of $h$ and, therefore, 
can be controlled similarly as for conforming methods. The optimality
of the adaptive \SIPG was shown in \cite{BonitoNochetto:10}; see
also \cite{HoppeKanschatWarburton:09}. 

The standard error analysis of the \SIPG requires
that $\sigma$ is sufficiently large for the respective bilinear from
to be coercive with respect to an energy-like norm. It is not known in
general, however, whether the choice of $\sigma$ required for
coercivity of the interior penalty \DGFEM bilinear form is large enough
to ensure that the discontinuity-penalization term can be controlled
by the element and jump residuals only. Therefore, the convergence of
\SIPG is still open for values of $\sigma$ large
enough for coercivity but, perhaps, \emph{not large enough} for the
crucial result from \cite{KarakashianPascal:07} to hold. To the best
of our knowledge, the only result in this direction is the proof of
convergence of a weakly overpenalized \ADGM for linear elements
\cite{GudiGuzman:14}, utilizing the intimate relation between this
method and the lowest order Crouzeix-Raviart elements.

This work is concerned with proving that the \ADGM converges for
\emph{all} values of $\sigma$ for which the method is coercive,
thereby settling the above discrepancy between the magnitude of
$\sigma$ required for coercivity and the, typically much larger,
values required for proof of convergence of \ADGM. Apart from
settling this open problem theoretically, this new result has some
important consequences in practical computations: it is well known
that as $\sigma$ grows, the condition number of the respective
stiffness matrix also grows. Therefore, the magnitude of the
discontinuity-penalization parameter $\sigma$ affects the performance
of iterative linear solvers, whose complexity is also typically
included in algorithmic optimality discussions of adaptive finite
elements. In addition, the theory presented here includes a large
class of practically
relevant marking strategies and covers popular 
discontinuous Galerkin methods like the local discontinuous Galerkin
method (LDG) and even the \emph{nonsymmetric} interior penalty method
(\NIPG), which are coercive for any $\sigma>0$. Moreover, we
  expect that it can be generalised to non-conforming
  discretisations for a number of other problems
  like the Stokes equations or fourth order elliptic problems.
  However, as for the
  conforming counterpart~\cite{MorinSiebertVeeser:08}, no
  convergence rates are guaranteed.

The proof of convergence of the \ADGM, discussed
below, is motivated by the basic convergence for the conforming adaptive
finite element framework of Morin, Siebert and Veeser
\cite{MorinSiebertVeeser:08}. More specifically, we extend
considerably the ideas from  \cite{MorinSiebertVeeser:08} and 
\cite{Gudi:10} to be able
to address the crucial challenge that the limits of \DGFEM solutions,
constructed by the adaptive algorithm, do not necessarily belong to
the energy space of the boundary value problem as well as to conclude
convergence from a perturbed best approximation result. 

To highlight the key
theoretical developments without the need to resort to complicated
notation, we prefer to focus on the simple setting of the Poisson
problem with essential homogeneous boundary conditions and conforming
shape regular triangulations. We believe,
however, that the results presented below are valid for general
elliptic PDEs including convection and reaction phenomena as well as
for some classes of non conforming meshes; compare with \cite{BonitoNochetto:10}.  

The remainder of this work is structured as follows. In
Section~\ref{sec:prelim} we shall introduce the \ADGM 
framework for Poisson's equation and state the main result, which is
then proved in Section~\ref{sec:convergence-est} after 
some auxiliary results,
needed to generalise \cite{MorinSiebertVeeser:08},
are provided in Sections~\ref{sec:limit} and~\ref{sec:uinfty}. 
In particular, in Section~\ref{sec:limit} a space is presented,
which is generated from limits of discrete discontinuous functions in
the sequence of discontinuous Galerkin spaces constructed by
\ADGM. Section~\ref{sec:uinfty} is then concerned with proving that  
the sequence of discontinuous Galerkin solutions produced by \ADGM
converges indeed to a generalised Galerkin solution in this limit space.
This follows from an (almost) best-approximation property,
generalising the ideas in \cite{Gudi:10}.

\section{The \ADGM and the main result}
\label{sec:prelim}
Let a measurable set
$\omega$ and a $m\in\N$. We consider the Lebesgue
space $L^2(\omega;\R^m)$ of square integrable
functions over $\omega$ with values in $\R^m$, with inner product  $\scp[\omega]{\cdot}{\cdot}$ 
and associated norm $\norm[\omega]{\cdot}$. We also set $L^2(\omega):=L^2(\omega;\R)$.
The
Sobolev space $H^1(\omega)$ is the space of all functions in
$L^2(\omega)$ whose weak gradient is in $L^2(\omega;\R^d)$, for $d\in \N$. 
Thanks to the Poincar\'e-Friedrichs' inequality, the closure
$H_0^1(\omega)$ of $C_0^\infty(\omega)$ in $H^1(\omega)$ 
is a Hilbert space with inner product
$\scp[\omega]{\nabla \cdot}{\nabla \cdot}$ and norm
$\norm[\omega]{\nabla \cdot}$.  
Also, we denote
the dual space $H^{-1}(\omega)$ of $H_0^1(\omega)$, with the norm
$\norm[H^{-1}(\omega)]{\lf{v}}:=\sup_{w\in
  H_0^1(\omega)}\frac{\dual{\lf{v}}{w}}{\norm[\omega]{\nabla w}}$, $\lf{v}\in H^{-1}(\omega)$, with
dual brackets defined by $\dual{\lf{v}}{w}:=\lf{v}(w)$, for
$w\in H^1_0(\omega)$.

Let $\Omega \subset \mathbb{R}^d$,
$d=2,3$, be a bounded polygonal ($d=2$) or polyhedral ($d=3$)
Lipschitz domain. We consider the Poisson problem
\begin{equation}\label{eq:elliptic}
  -\Delta u=f\quad\text{in}\quad\Omega,
  \qquad
  u=0\quad\text{on}\quad\partial\Omega,
\end{equation}
with $f\in L^2(\Omega)$. The weak formulation of \eqref{eq:elliptic} reads:
find $u\in H^1_0(\Omega)$, such that 
\begin{align}\label{eq:weak}
  \scp[\Omega]{\nabla u}{\nabla v}=\scp[\Omega]{f}{v}\qquad \text{for
    all}~v\in H_0^1(\Omega).
\end{align}
From the Riesz representation theorem, it follows that the solution
$u$ exists and is unique.

\subsection{Discontinuous Galerkin method}
  \label{secdgfem}

Let $\grid$ be a conforming (that is, not containing any hanging nodes) 
subdivision of $\Omega$ into disjoint
closed simplicial 
elements $\elm$  so that $\bar{\Omega}=\bigcup\{\elm:\elm\in\grid\}$
and set $h_{\elm}:=|{\elm}|^{1/d}$.  
Let $\sides=\sides(\grid)$ be the set of $(d-1)$-dimensional element faces
$\side$ associated with the subdivision $\grid$ including $\partial
\Omega$, and let
$\mathring\sides=\mathring\sides(\grid)\subset\sides$ by the subset of 
interior faces only. 
We also introduce the
\emph{mesh size} function $\hG:\Omega\to \mathbb{R}$, defined by $\hG(x):=
h_\elm$, if $x\in \elm\backslash\partial\elm$ and $\hG(x)=h_\side:=
|S|^{1/(d-1)} $, if $x\in S\in\sides$
and set
  $\Gamma=\Gamma(\grid)=\bigcup\{S:S\in\sides\}$ and $\mathring\Gamma=\mathring\Gamma(\grid)=\bigcup\{S:S\in\mathring\sides\}$.
We assume that $\grid$ is derived by iterative or recursive newest vertex bisection of
an initial conforming mesh $\grid_0$; see \cite{Baensch:91,Kossaczky:94,Maubach:95,Traxler:97}.
We denote  by $\grids$ the family of shape regular triangulations consisting of such
subdivisions of $\grid_0$.

Let
$\mathcal{P}_r({\elm})$ denote the the space of all polynomials on ${\elm}$
of  degree at most $r\in\N$, we define the discontinuous finite element space
\begin{equation}\label{eq:FEM-spc}
  \VG  :=\prod_{\elm\in\grid}\P_r(\elm)\subset \prod_{\elm\in\grid}W^{1,p}(\elm)=:
  W^{1,p}(\grid), \quad 1\le p\le\infty,
\end{equation}
and $H^1(\grid):=W^{1,2}(\grid)$.
Let $\nodes=\nodes(\grid)$ be the set of Lagrange nodes of $\VG$ and
define the neighbourhood of a node $z\in\nodes(\grid)$ by
$\neighG(z):=\{\elm'\in\grid:z\in\elm'\}$, 
and the union of its elements by
$\omegaG(z)=\bigcup\{\elm'\in\grid:z\in\elm'\}$. We also define the corresponding neighbourhoods for all elements $\elm\in \grid$ by
$\neighG(\elm):=\{\elm'\in\grid:\elm\cap\elm'\neq\emptyset\}$
and
$\omegaG(\elm)=\bigcup\{\elm'\in\grid:\elm'\cap\elm\neq\emptyset\}=\bigcup\{\omegaG(z):z\in
\nodes(\elm)\cap\elm\}$, respectively, and set  $\omegaG(S):=\bigcup\{\elm\in\grid:S\subset\elm\}$; compare with Figure~\ref{fig:neigh}.
 The numbers of neighbours
$\#\neighG(z)$ and $\#\neighG(\elm)$ are uniformly bounded for all
$z\in\nodes$, respectively
$\elm\in\grid$, depending on the shape regularity of $\grid$ and, thus,
on $\grid_0$.

\begin{figure}[h]
  \label{fig:prichn2b}
  \centering
  \begin{tikzpicture}[scale=2.5]
    \draw [thick]
    (-0.2,0.2) coordinate (A) -- 
    (1,0) coordinate (B) -- 
    (0,1) coordinate (C) -- cycle;
    \draw [thick]
    (B) -- 
    (1.5,0.5) coordinate (D) --
    (C);
    \draw [thick]
    (D) -- 
    (0.6,1.3) coordinate (E) --
    (C);
    \draw [thick]
    (C) -- 
    (E) --
    (-0.5,1.25) coordinate (F);
    \draw [thick]
    (C) -- 
    (F) --
    (A);
    \draw [thick]
    (A) -- 
    (F) --
    (-0.8,-0.4) coordinate (G);
    \draw [thick]
    (A) -- 
    (G) --
    (B);
    \draw [thick]
    (G) -- 
    (1.2,-0.5) coordinate (H)--
    (B);
    \draw [thick]
    (H) -- 
    (D);
    \fill (0.25,0.4) coordinate (bot) node
    {$\elm$};
  \end{tikzpicture}
  \caption{The neighbourhood $\neighG(\elm)$ of some $\elm\in\grid$.\label{fig:neigh}}
\end{figure}

Let $\elm^+$, $\elm^-$ be two generic elements sharing a face
$S:=\elm^+\cap\elm^-\in\mathring\sides$ and let $\normal^+$ and
$\normal^-$ the outward normal vectors of $\elm^+$ respectively
$\elm^-$ on $S$. For
$q:\Omega\to\mathbb{R}$ and $\vec{\phi}:\Omega\to\mathbb{R}^d$, let
$q^{\pm}:=q|_{S\cap\partial\elm^{\pm}}$ and
$\vec{\phi}^{\pm}:=\vec{\phi}|_{S\cap\partial\elm^{\pm}}$, and set
\begin{alignat*}{2}
\mean{q}|_\side&:=\frac12(q^+ + q^-),\ &\qquad
\mean{\vec{\phi}}|_\side &:=\frac12(\vec{\phi}^+ + \vec{\phi}^-),
\\
\jump{q}|_\side &:=q^+\normal^++q^-\normal^-,\ &\qquad
\jump{\vec{\phi}}|_\side &:=\vec{\phi}^+\cdot \normal^++\vec{\phi}^-\cdot
\normal^-;
\end{alignat*}
if $S\subset \partial\elm\cap\partial\Omega$, we set
$\mean{\vec{\phi}}|_\side :=\vec{\phi}^+$  and
$\jump{q}|_\side :=q^+\normal^+$.

In order to define the discontinuous Galerkin schemes, we
  introduce the following local lifting operators. For $S\in\sides$,
  we define  $\riftS:L^2(\side)^d\to
  \prod_{\elm\in\grid}\P_\ell(\elm)^d$ and $\liftS:L^2(\side)\to
  \prod_{\elm\in\grid}\P_\ell(\elm)^d$ by 
  \begin{subequations}\label{eq:liftG}
    \begin{align}
      \int_\Omega \riftS(\vec{\phi})\cdot
      \vec{\tau}\dx&=\int_\side \vec{\phi}\cdot\mean{\vec{\tau}}\ds\qquad\forall
                     \vec{\tau}\in \prod_{\elm\in\grid}\P_\ell(\elm)^d
                     \intertext{and}
                     \int_\Omega \liftS(q)\cdot
                     \vec{\tau}\dx&=\int_\side q\jump{\vec{\tau}}\ds\qquad\forall
                                    \vec{\tau}\in \prod_{\elm\in\grid}\P_\ell(\elm)^d,
    \end{align}
    with $\ell\in\{r,r+1\}$.
    Note that $\liftS(q)$ and $\riftS(\vec{\phi})$ vanish outside 
    $\omegaG(S)$. Moreover, using the local definition and the
    boundedness of the lifting operators in a reference situation
    together with standard scaling arguments, we have 
    for $\vec{\phi}\in\P_r(\side)^d$ and
    $q\in\P_r(\side)$ that
    \begin{align}\label{eq:liftGstab}
      \norm{\liftS(\vec{\phi})}\Cleq
      \norm[\side]{\hG^{-1/2}\vec{\phi}}\quad\text{and}\quad 
       \norm{\riftS(q)}\Cleq
      \norm[\side]{\hG^{-1/2}q};
    \end{align}
    \end{subequations}
    compare with \cite{ArnBreCocMar:02}. 
    Also, here and below we write $a \lesssim b$
    when $a \le Cb$ for a constant $C$ not depending on the local mesh size of $\grid$ or other essential quantities
    for the arguments presented below. 
    Observing that the sets $\omegaG(S)$, $\side\in\sides$ do
    overlap  at most $d+1$ times, we have for the 
    global lifting operators $\riftG:L^2(\Gamma)^d\to \VG^d$ and
    $\liftG:L^2(\mathring\Gamma)\to \VG^d$ defined by 
    \begin{align*}
      \riftG(\vec{\phi}):=\sum_{\side\in\sides}\riftS(\vec{\phi})\qquad\text{and}\qquad \liftG(q):=\sum_{\side\in\mathring\sides}\riftS(q),
    \end{align*}
  that 
    \begin{align*}
      \norm{\riftG(\jump{v})}\Cleq 
      \norm[\Gamma]{\hG^{-1/2} v}
      \quad\text{and}\quad 
      \norm{\liftG(\vec{\beta}\cdot\jump{v})}\Cleq \abs{\vec{\beta}}
      \norm[\mathring\Gamma]{\hG^{-1/2} v}
    \end{align*}
    for all $v\in \VG$ and $\vec{\beta}\in \R^d$.

We define the bilinear form $\bilin{\cdot}{\cdot}:\VG\times \VG\to
\mathbb{R}$ by
\begin{align}\label{dgbilinear}
\begin{aligned}
 \bilin{w}{v}&:=\int_{\grid}\nabla w\cdot\nabla v\,\ud x
       -\int_{\sides}\big(\mean{\nabla w}\cdot\jump{v}+\theta \mean{\nabla v}\cdot\jump{w}\big)\ds
       \\
       &\quad +\int_{\mathring\sides}
       \big(\vec{\beta}\cdot\jump{w}\jump{\nabla v}+\jump{\nabla
         w}\vec{\beta}\cdot\jump{v}\big)\ds
       \\
       &\quad + \int_\Omega \gamma \big(\riftG(\jump{w})+\liftG(\vec{\beta}\cdot\jump{w})\big)\cdot \big(\riftG(\jump{v})+\liftG(\vec{\beta}\cdot\jump{v})\big)\dx
       \\
       &\quad+\int_{\sides}\frac{\sigma}{\hG}\jump{w}\cdot\jump{v}\,\ud s;
\end{aligned}
\end{align}
for $\theta\in\{\pm1\}$, $\gamma\in\{0,1\}$, $\vec{\beta}\in\R^d$ and $\sigma\ge0$.
Here we have used the short-hand notation
\begin{align*}
  \int_\grid \cdot\dx:=\sum_{\elm\in\grid}\int_\elm\cdot\dx\qquad\text{and}\qquad\int_\sides \cdot\ds:=\sum_{S\in\sides}\int_\side  \cdot\ds.
\end{align*}
We consider the choices $\theta=1$, $\vec{\beta}=\vec{0}$, and $\gamma=0$ yielding the
symmetric interior penalty method (\SIPG) \cite{DouglasDupont:1976}, $\theta=-1$,
$\vec{\beta}=\vec{0}$, and $\gamma=0$ which gives the nonsymmetric interior
penalty methods (\NIPG) \cite{RiviereWheelerGirault:1999}, and $\theta=1$, $\vec{\beta}\in\R^d$, and
$\gamma=1$ which yields the local discontinuous Galerkin method (\LDG) \cite{CockburnShu:1998};
compare also with~\cite{ArnBreCocMar:02} and~\cite{John2016}.

In all three cases, the corresponding \emph{discontinuous
Galerkin finite element method} (\DGFEM) then reads: find $u_\grid\in \VG$ such that
\begin{equation}\label{ipdg}
\bilin{u_\grid}{v_\grid}=\int_{\Omega} f v_\grid\,\ud x =:l(v_\grid),\quad\text{for all }\ v_\grid\in \VG.
\end{equation}
Upon denoting by $\nablaG v$ the piecewise
gradient $\nablaG v|_\elm=\nabla v|_\elm$ for all $\elm\in
\grid$, the corresponding energy norm $\enorm{\cdot}$ is defined by
\begin{align*}
 \enorm{w}&:=\Big(\norm{\nablaG w}^2
    +\bar\sigma\norm[\Gamma]{\hG^{-1/2}\jump{w}}^2\Big)^{1/2},
\end{align*}
for $w|_{\elm}\in H^1(\elm)$, $\elm\in\grid$. Here $\bar\sigma:=\max\{1,\sigma\}$.
Also, for some
subset $\mathcal{M}\subset\grid$ with $\omega=\bigcup\{\elm\mid \elm\in\mathcal{M}\}$, we define 
\begin{align*}
  \enorm[\mathcal{M}]{w}&:=\Big(\norm[\omega]{\nablaG w}^2
    +\bar\sigma\norm[\Gamma(\mathcal{M})]{\hG^{-1/2}\jump{w}}^2\Big)^{1/2}.
\end{align*}

If for \SIPG we have $\sigma:=C_{\sigma} r^2$ for some
constant $C_{\sigma}>0$ sufficiently large, $\sigma>0$ for \NIPG and
for \LDG~$\sigma>0$ when $\ell=r$ and $\sigma=0$ when $\ell=r+1$
  (\cite{John2016}), 
then there exists $\alpha=\alpha(\sigma)>0$, such that 
\begin{align}\label{eq:coercive}
  \alpha\enorm{w}^2\le \bilin{w}{w}\qquad\forall w\in H^1(\grid),
\end{align}
i.e. all three \DGFEM[s] are
coercive in $\VG$. Note that coercivity~\eqref{eq:coercive} holds true also for functions in $H^1(\grid)$ after extending the discrete bilinear form using some liftings; see, e.g.,
\cite{Arnold:82,ArnBreCocMar:02,John2016} for details. 
The choice $\bar\sigma=\max\{1,\sigma\}$ accounts
for the fact that we can have $\sigma=0$ for the LDG
in~\cite{John2016}.

From standard scaling arguments, we conclude
 the following local Poincar\'e-Friedrichs inequality
 from~\cite{Brenner:2003,BuffaOrtner:09}. 

\begin{prop}[Poincar\'e-$\VG$]\label{prop:poincareG}
  Let $\grid$ be a triangulation of $\Omega$ and $\grid_\star$ some
  refinement of $\grid$. Then, for $v\in \V(\grid_\star)$,
  $\elm\in\grid$ and $v_\elm:=|\omegaG(\elm)|^{-1}\int_{\omegaG(\elm)}
  v\dx$, we have 
  \begin{align*}
    \norm[\omegaG(\elm)]{v-v_\elm}^2\Cleq \int_{\omegaG(\elm)}\hG^2|\nablaG
    v|^2\dx+\int_{S\in\sides_\star, S\subset\omegaG(\elm)}\hG^2\hG[\grid_\star]^{-1}\jump{v}^2\ds,
  \end{align*}
  where $\sides_\star=\sides(\grid_\star)$ and the hidden constant
  depends on $d$ and on the shape regularity of $\neighG(\elm)$.
\end{prop}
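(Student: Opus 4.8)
The plan is to deduce this patchwise Poincar\'e--Friedrichs estimate for the broken space $H^1(\grid_\star)$ from the Poincar\'e--Friedrichs inequality for piecewise $H^1$ functions on a \emph{fixed} Lipschitz domain, as established in \cite{Brenner:2003,BuffaOrtner:09}, by an elementary scaling argument. Fix $\elm\in\grid$ and abbreviate $\omega:=\omegaG(\elm)$ and $h:=\helm$. First I would record the geometric facts used below: since $\grid$ and $\grid_\star$ are both newest-vertex-bisection refinements of a common initial triangulation $\grid_0$, the patch $\neighG(\elm)$ is uniformly shape regular and $\#\neighG(\elm)$ is bounded in terms of $\grid_0$ alone; hence $h_{\elm'}$, $h$ and $\diam(\omega)$ are mutually comparable for all $\elm'\in\neighG(\elm)$, $\abs{\omega}$ is comparable to $h^d$, and $\omega$ is a connected Lipschitz domain --- the union of the vertex stars $\omegaG(z)$, $z\in\nodes(\elm)\cap\elm$ --- with all constants depending only on $d$ and the shape-regularity of $\grid_0$. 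Moreover, since $\grid_\star$ refines $\grid$ and $\omega$ is a union of elements of $\grid$, the restriction $\grid_\star|_\omega$ is a conforming, uniformly shape regular simplicial subdivision of $\omega$.

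The core step is as follows. Because $v\in\V(\grid_\star)$ is a polynomial on each element of $\grid_\star$, it belongs to the broken space $H^1(\grid_\star|_\omega)$; I would then apply the Poincar\'e--Friedrichs inequality for piecewise $H^1$ functions \cite{Brenner:2003,BuffaOrtner:09} on $\omega$, with subdivision $\grid_\star|_\omega$ and with the mean value $v_\elm=\abs{\omega}^{-1}\int_\omega v\dx$, to obtain
\[
  \norm[\omega]{v-v_\elm}^2\Cleq\diam(\omega)^2\Big(\norm[\omega]{\nablaG v}^2
  +\sum_{S\in\sides_\star,\;S\subset\omega}\frac{1}{\abs{S}^{1/(d-1)}}\,\norm[S]{\jump{v}}^2\Big).
\]
Here the sum may even be restricted to the interior faces of $\grid_\star|_\omega$, which only strengthens the bound. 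The essential point --- and the only genuinely non-elementary ingredient --- is that the hidden constant depends \emph{only} on $d$, on the shape of $\omega$, and on the shape-regularity of $\grid_\star|_\omega$, and \emph{not} on $\#(\grid_\star|_\omega)$, that is, not on how strongly $\grid_\star$ refines $\grid$ inside the patch; this uniformity over all admissible refinements is precisely what the broken Poincar\'e--Friedrichs results of \cite{Brenner:2003,BuffaOrtner:09} provide. Equivalently, one may rescale $\omega$ to unit diameter, note that the rescaled patch is a union of a bounded number of uniformly shape-regular simplices, and apply the fixed-domain inequality there. Should the cited estimate carry the jump term as $\abs{S}^{-1}\big(\int_S\jump{v}\big)^2$, or use $h_S$ defined as the diameter of an adjacent simplex, one passes to the displayed form by Cauchy--Schwarz and the equivalence of mesh-size quantities on shape-regular meshes.

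It then remains to rewrite the right-hand side in the form of the statement, which is pure bookkeeping. For $x$ in the interior of any $\elm'\in\neighG(\elm)$ we have $\hG(x)=h_{\elm'}$, comparable to $h$ and hence to $\diam(\omega)$, so $\diam(\omega)^2\norm[\omega]{\nablaG v}^2$ is comparable to $\int_\omega\hG^2\abs{\nablaG v}^2\dx$. For $S\in\sides_\star$ with $S\subset\omega$ and $x\in S$ we again have $\hG(x)$ comparable to $h$ and to $\diam(\omega)$ --- whether $x$ lies in the interior of an element of $\grid$ or on a face of $\grid$ --- whereas $\abs{S}^{1/(d-1)}=\hG[\grid_\star](x)$ by the definition of the mesh-size function of $\grid_\star$; hence $\diam(\omega)^2\abs{S}^{-1/(d-1)}\norm[S]{\jump{v}}^2$ is comparable to $\int_S\hG^2\hG[\grid_\star]^{-1}\jump{v}^2\ds$. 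Summing over all $S\in\sides_\star$ with $S\subset\omega$ and inserting into the displayed estimate gives the assertion, the constant depending only on $d$ and the shape-regularity of $\neighG(\elm)$.

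The main obstacle is concentrated entirely in the core step: one needs the Poincar\'e--Friedrichs constant for piecewise $H^1$ functions to be controlled by the shape-regularity parameter alone, uniformly in the (arbitrarily large) number of elements of the subdivision inside the patch --- this is the reason a plain $H^1$-Poincar\'e inequality on $\omega$ does not suffice and the broken version \cite{Brenner:2003,BuffaOrtner:09} must be invoked. A self-contained alternative would combine the single-element broken Poincar\'e estimate on each $\elm'\in\neighG(\elm)$ with a chaining argument bounding the differences of the elementwise means $v_{\elm'}$ across the coarse faces of $\neighG(\elm)$ by the associated gradient and jump terms; this circumvents citing the patch inequality but is considerably more technical and yields nothing extra here.
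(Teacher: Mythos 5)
Your proposal is correct and follows essentially the same route the paper intends: the paper gives no proof but simply remarks that the proposition follows ``from standard scaling arguments'' out of the broken Poincar\'e--Friedrichs inequalities of Brenner and Buffa--Ortner, and your writeup supplies exactly those scaling details --- applying the fixed-domain piecewise inequality on the rescaled patch $\omegaG(\elm)$, using shape regularity to bound the patch cardinality and to make $\hG$, $\helm$, and $\diam(\omegaG(\elm))$ mutually comparable, and converting $\abs{S}^{1/(d-1)}$ into $\hG[\grid_\star]$. You also correctly flag the one genuinely non-elementary ingredient, namely that the broken Poincar\'e constant is controlled by shape regularity alone and not by the number of elements of $\grid_\star$ inside the patch; this uniformity is indeed what the cited references provide and is the reason a plain $H^1$ Poincar\'e inequality is insufficient.
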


The next important result from \cite[Theorem 2.2]{KarakashianPascal:03} (compare also
with  \cite[Lemma 6.9]{BonitoNochetto:10} and \cite[Theorem 3.1]{BuffaOrtner:09}) quantifies the local distance
of a discrete non-conforming function to the conforming subspace with
the help of the of the scaled jump terms. 
\begin{prop}\label{P:dist-dGcG}
  For $\grid\in\grids$, there exists an interpolation operator
  $\mathcal{I}_\grid:H^1(\grid)\to\VG\cap H_0^1(\Omega)$, such that we have 
  \begin{align*}
    \norm[\elm]{\hG^{-1/2}(v-\mathcal{I}_\grid
    v)}^2+\norm[\elm]{\nabla(v-\mathcal{I}_\grid v)}^2
    \Cleq \int_{\partial\elm}\hG^{-1}\jump{v}^2\ds,
  \end{align*}
  for
  all $\elm\in\grid$ and $v\in\VG$.   
\end{prop}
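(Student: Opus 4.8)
\emph{Sketch of proof.} The plan is to realise $\mathcal{I}_\grid$ as an averaging (Oswald--type) quasi-interpolation onto the conforming Lagrange space $\VG\cap H_0^1(\Omega)$, and then to charge the elementwise error to the scaled jumps by a telescoping argument over element patches, using only finite dimensional norm equivalences, inverse estimates and the shape regularity of $\grids$. Since $\VG\cap H_0^1(\Omega)$ is a standard Lagrange finite element space, it carries a nodal basis associated with those Lagrange nodes $z\in\nodes(\grid)$ lying in the interior of $\Omega$. For $v\in\VG$ we accordingly prescribe the nodal values of $\mathcal{I}_\grid v$: at an interior node $z$ we set $(\mathcal{I}_\grid v)(z):=\frac1{\#\neighG(z)}\sum_{\elm'\in\neighG(z)}v|_{\elm'}(z)$, i.e.\ the average of the (generally multivalued) traces of $v$ at $z$, while at boundary nodes we set $(\mathcal{I}_\grid v)(z):=0$ so as to respect the homogeneous boundary condition. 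On the full space $H^1(\grid)$ one precomposes this with the elementwise $L^2(\elm)$--projection onto $\P_r(\elm)$; as the asserted bound is only claimed for $v\in\VG$, we work with the construction above, which reproduces the functions of $\VG\cap H_0^1(\Omega)$.

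Fix now $\elm\in\grid$. The restriction $(v-\mathcal{I}_\grid v)|_\elm$ is a polynomial of degree at most $r$, hence is determined by its values at the Lagrange nodes $z\in\nodes(\grid)\cap\elm$, whose number is bounded in terms of $d$ and $r$. Equivalence of norms on the reference simplex, the scaling $\norm[\elm]{w}^2\Cleq\hG^{d}\sum_{z}\abs{w(z)}^2$ for $w\in\P_r(\elm)$, and the inverse estimate $\norm[\elm]{\nabla w}\Cleq\hG^{-1}\norm[\elm]{w}$ together give
\[
  \norm[\elm]{\hG^{-1/2}(v-\mathcal{I}_\grid v)}^2+\norm[\elm]{\nabla(v-\mathcal{I}_\grid v)}^2\Cleq\hG^{d-2}\sum_{z\in\nodes(\grid)\cap\elm}\abs{v|_\elm(z)-(\mathcal{I}_\grid v)(z)}^2 ,
\]
so it remains to bound each nodal discrepancy. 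For an interior node $z$ one has $v|_\elm(z)-(\mathcal{I}_\grid v)(z)=\frac1{\#\neighG(z)}\sum_{\elm'\in\neighG(z)}\bigl(v|_\elm(z)-v|_{\elm'}(z)\bigr)$, and, by shape regularity, any $\elm'\in\neighG(z)$ may be joined to $\elm$ by a chain $\elm=\elm_0,\dots,\elm_J=\elm'$ inside $\neighG(z)$ of uniformly bounded length $J$, consecutive members sharing a face $S_j\ni z$. Telescoping, $v|_\elm(z)-v|_{\elm'}(z)=\sum_{j=1}^J\bigl(v|_{\elm_{j-1}}(z)-v|_{\elm_j}(z)\bigr)$, and each summand is, up to sign, the value at $z$ of $\jump{v}|_{S_j}$ contracted with the relevant unit normal; hence $\abs{v|_\elm(z)-(\mathcal{I}_\grid v)(z)}\Cleq\sum_{S\in\sides(\grid),\,z\in S}\abs{\jump{v}|_S(z)}$. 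A boundary node $z$ is handled identically, the terminal face of the chain now lying on $\partial\Omega$, where $\jump{v}|_S=v^+\normal^+$, so that $\abs{v|_\elm(z)}$ itself is controlled by a boundary jump.

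Finally, for a face $S\ni z$ the trace $\jump{v}|_S$ is a polynomial of degree at most $r$ on the $(d-1)$--simplex $S$ of diameter $\simeq\hG$, so norm equivalence and scaling give $\abs{\jump{v}|_S(z)}^2\Cleq\hG^{-(d-1)}\norm[S]{\jump{v}}^2$. Substituting this into the two previous displays, and using that by shape regularity every face $S$ meeting $\elm$ satisfies $h_S\simeq\hG$ and that there are only boundedly many of them, we obtain
\[
  \norm[\elm]{\hG^{-1/2}(v-\mathcal{I}_\grid v)}^2+\norm[\elm]{\nabla(v-\mathcal{I}_\grid v)}^2\Cleq\sum_{S\in\sides(\grid),\,S\cap\elm\neq\emptyset}h_S^{-1}\norm[S]{\jump{v}}^2 ,
\]
which is the local estimate behind the proposition: the faces of $\elm$ provide the jump contributions on $\partial\elm$, the remaining boundedly many faces lying inside $\omegaG(\elm)$. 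Summing over $\elm\in\grid$ and invoking the bounded overlap of the patches $\{\neighG(z)\}$ recovers the global form of \cite{KarakashianPascal:03,BuffaOrtner:09}.

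The only step that is not mere bookkeeping is the telescoping over patches: one must, for every $z$, produce connecting chains of uniformly bounded length within $\neighG(z)$ and identify trace differences across a shared face with point values of $\jump{v}$ there. This is precisely where the shape regularity of the newest vertex bisection family $\grids$ enters, and all hidden constants inherit that dependence, together with a dependence on $d$ and $r$ through the finite dimensional norm equivalences; localising the right-hand side to the faces meeting $\elm$, rather than to the whole domain, then only uses the bounded overlap of the patches.
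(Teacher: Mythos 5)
Your argument --- Oswald averaging followed by telescoping over vertex patches, combined with finite--dimensional scaling on the reference simplex and face --- is precisely the standard construction that the cited references \cite{KarakashianPascal:03,BonitoNochetto:10,BuffaOrtner:09} use for this result; the paper itself offers no independent argument beyond these citations. The individual steps in your write-up (reduction to nodal discrepancies, existence of uniformly short face-connected chains inside $\neighG(z)$, identification of trace differences with point values of $\jump{v}$, and the scaling $|\jump{v}|_S(z)|^2\Cleq h_S^{-(d-1)}\norm[S]{\jump{v}}^2$) are all sound, as is the handling of boundary nodes via the convention $\jump{v}|_S=v^+\normal^+$ on $\partial\Omega$.

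You should, however, be explicit about one point that your own final display already signals: what you prove --- and what \cite[Lemma~6.9]{BonitoNochetto:10} and \cite[Thm.~3.1]{BuffaOrtner:09} prove --- is
\[
 \norm[\elm]{\hG^{-1/2}(v-\mathcal{I}_\grid v)}^2 + \norm[\elm]{\nabla(v-\mathcal{I}_\grid v)}^2\Cleq \sum_{S\in\sides(\grid),\,S\cap\elm\neq\emptyset}h_S^{-1}\norm[S]{\jump{v}}^2,
\]
with jump contributions from the whole vertex patch $\omegaG(\elm)$, whereas the proposition as printed restricts the right-hand side to $\partial\elm$. This is not a flaw in your proof; the stronger printed form cannot hold for \emph{any} linear $\mathcal{I}_\grid:\VG\to\VG\cap H_0^1(\Omega)$. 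Take an interior Lagrange vertex $z$ shared by elements $\elm_1,\dots,\elm_M$ with $M\ge 4$, and let $v\in\VG$ vanish on every element except one $\elm_i$, where $v|_{\elm_i}$ is the nodal basis function at $z$. For every $\elm_j$ not sharing a face with $\elm_i$ one has $\jump{v}|_{\partial\elm_j}=0$, so the purported bound applied on $\elm_j$ forces $(\mathcal{I}_\grid v)(z)=v|_{\elm_j}(z)=0$. By linearity $(\mathcal{I}_\grid v)(z)=0$ for every such single-element $v$, hence also for their sum, which is the continuous global hat function at $z$ --- but for that function all jumps vanish, so the purported bound on any $\elm\ni z$ forces $(\mathcal{I}_\grid v)(z)=1$, a contradiction. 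The proposition should therefore be read with the patchwise right-hand side, exactly as you derived it; the paper's later uses of Proposition~\ref{P:dist-dGcG} only require this version.
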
 
From this, we can easily deduce the following broken Friedrichs type
inequality; compare also with \cite[(4.5)]{BuffaOrtner:09}.
\begin{cor}[Friedrichs-$\VG$]\label{C:Friedrichs}
  Let $\grid\in\grids$, then
  \begin{align*}
    \norm[\Omega]{v} \Cleq \enorm[\grid]{v}\quad\text{for all}~v\in\VG.
  \end{align*}
\end{cor}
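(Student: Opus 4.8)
The plan is to split $v\in\VG$ into its conforming interpolant and the non-conforming remainder. Fix $\grid\in\grids$ and let $\mathcal{I}_\grid\colon\VG\to\VG\cap H_0^1(\Omega)$ be the operator from Proposition~\ref{P:dist-dGcG}. Writing $v=\mathcal{I}_\grid v+(v-\mathcal{I}_\grid v)$, I would estimate the two contributions separately, bounding everything in terms of $\norm{\nablaG v}$ and $\norm[\Gamma]{\hG^{-1/2}\jump{v}}$, both of which are dominated by $\enorm[\grid]{v}$ since $\bar\sigma=\max\{1,\sigma\}\ge1$.

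The first step is to sum the local estimate of Proposition~\ref{P:dist-dGcG} over all $\elm\in\grid$. Because every face $S\in\sides$ is contained in $\partial\elm$ for at most two elements, the sum of the right-hand sides is $\Cleq\norm[\Gamma]{\hG^{-1/2}\jump{v}}^2$, so that
\begin{equation*}
  \norm[\Omega]{\hG^{-1/2}(v-\mathcal{I}_\grid v)}^2+\norm[\Omega]{\nablaG(v-\mathcal{I}_\grid v)}^2\Cleq\norm[\Gamma]{\hG^{-1/2}\jump{v}}^2 .
\end{equation*}
Since $\hG\le\diam(\Omega)$ pointwise on $\Omega$, the first term yields $\norm[\Omega]{v-\mathcal{I}_\grid v}^2\le\diam(\Omega)\,\norm[\Omega]{\hG^{-1/2}(v-\mathcal{I}_\grid v)}^2\Cleq\norm[\Gamma]{\hG^{-1/2}\jump{v}}^2$, so the remainder is controlled both in $L^2(\Omega)$ and in the broken $H^1$-seminorm by the jump term.

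The second step treats the conforming part: as $\mathcal{I}_\grid v\in H_0^1(\Omega)$, the classical \Poincare--Friedrichs inequality gives $\norm[\Omega]{\mathcal{I}_\grid v}\Cleq\norm[\Omega]{\nabla\mathcal{I}_\grid v}$, and a triangle inequality together with the first step gives $\norm[\Omega]{\nabla\mathcal{I}_\grid v}\le\norm{\nablaG v}+\norm[\Omega]{\nablaG(v-\mathcal{I}_\grid v)}\Cleq\norm{\nablaG v}+\norm[\Gamma]{\hG^{-1/2}\jump{v}}$. Combining with $\norm[\Omega]{v}\le\norm[\Omega]{v-\mathcal{I}_\grid v}+\norm[\Omega]{\mathcal{I}_\grid v}$ produces $\norm[\Omega]{v}\Cleq\norm{\nablaG v}+\norm[\Gamma]{\hG^{-1/2}\jump{v}}\Cleq\enorm[\grid]{v}$, which is the claim.

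There is no genuine obstacle here; the only points needing a little care are that the finite-overlap constant in the first step and the \Poincare constant in the second step are uniform over the family $\grids$ — which holds because every $\grid\in\grids$ inherits shape regularity from $\grid_0$ — and that the mesh-size weight $\hG$ is globally bounded by $\diam(\Omega)$, which is exactly what turns the scaled $L^2$-control of $v-\mathcal{I}_\grid v$ into a true $L^2(\Omega)$-bound. Consequently the hidden constant depends only on $d$, on $\diam(\Omega)$, and on the shape regularity of $\grid_0$.
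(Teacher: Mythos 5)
Your proof is correct and follows precisely the route the paper indicates (the paper states this corollary can "easily" be deduced from Proposition~\ref{P:dist-dGcG}, without writing out the argument): decompose $v$ via the conforming interpolant $\mathcal{I}_\grid v$, control the remainder through the summed local estimates, and apply the standard Poincar\'e--Friedrichs inequality to the $H_0^1(\Omega)$ piece. The only tiny imprecision is in your final remark attributing uniformity of the Poincar\'e constant to shape regularity; that constant depends only on $\Omega$ (via $\diam(\Omega)$), while shape regularity is needed only for the hidden constant in Proposition~\ref{P:dist-dGcG} and the finite-overlap bound --- but your final statement of the dependencies is nonetheless correct.
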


Let
$BV(\Omega)$ denote the Banach space of functions with bounded
variation equiped with the norm 
\begin{align*}
  \norm[BV(\Omega)]{v}=\norm[L^1(\Omega)]{v}+|Dv|(\Omega),
\end{align*}
where $Dv$ is the measure representing the distributional derivative
of $v$ with total variation 
\begin{align*}
  |Dv|(\Omega)=\sup_{\phi\in C_0^1(\Omega)^d, \norm[L^{\infty(\Omega)\le
  1}]{\phi}}\int_\Omega v\divo \phi\dx.
\end{align*}
Here the supremum is taken over the space $C_0^1(\Omega)^d$ of all vector
valued continuously differentiable functions with compact support in $\Omega$.

Another crucial result \cite[Lemma~2]{BuffaOrtner:09} states then that the 
total variation of the distributional derivative of broken Sobolev functions 
is bounded by the discontiuous Galerkin norm.
\begin{prop}\label{P:|Dv|<dG} 
  For $\grid\in\grids$ we have that
  \begin{align*}
    |Dv|(\Omega)\Cleq \norm[L^1(\Omega)]{
    v}+\int_{\sides}\abs{\jump{v}}\ds\Cleq \enorm[\grid]{v}\quad\text{for all}~v\in H^{1}(\grid).
  \end{align*}
\end{prop}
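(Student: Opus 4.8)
The plan is to make the inclusion $v\in BV(\Omega)$ explicit by computing the distributional gradient $Dv$ through elementwise integration by parts. Given $\vec\phi\in C_0^1(\Omega)^d$ with $\norm[L^\infty(\Omega)]{\vec\phi}\le1$, I would write $\int_\Omega v\divo\vec\phi\dx=\sum_{\elm\in\grid}\int_\elm v\divo\vec\phi\dx$ and integrate by parts on each $\elm$ --- legitimate since $v|_\elm\in H^1(\elm)\subset W^{1,1}(\elm)$ and $\elm$ is Lipschitz --- to obtain $\int_\elm v\divo\vec\phi\dx=-\int_\elm\nablaG v\cdot\vec\phi\dx+\int_{\partial\elm}v\,(\vec\phi\cdot\normal_\elm)\ds$. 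Summing over $\elm$, the face integrals on $\partial\Omega$ drop because $\vec\phi$ has compact support, while on an interior face $S=\elm^+\cap\elm^-$ the two one-sided terms combine to $\vec\phi\cdot(v^+\normal^++v^-\normal^-)=\vec\phi\cdot\jump{v}$. This yields
\begin{equation*}
  \int_\Omega v\divo\vec\phi\dx=-\int_\Omega\nablaG v\cdot\vec\phi\dx+\int_{\mathring\sides}\vec\phi\cdot\jump{v}\ds,
\end{equation*}
so that $\bigl|\int_\Omega v\divo\vec\phi\dx\bigr|\le\norm[L^1(\Omega)]{\nablaG v}+\int_{\mathring\sides}\abs{\jump{v}}\ds$; the right-hand side is finite by the trace theorem $H^1(\elm)\to L^2(\partial\elm)$ applied facewise, hence $v\in BV(\Omega)$, and taking the supremum over all admissible $\vec\phi$ gives the first bound $|Dv|(\Omega)\Cleq\norm[L^1(\Omega)]{\nablaG v}+\int_{\sides}\abs{\jump{v}}\ds$.

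For the second inequality I would estimate the two contributions separately by Cauchy--Schwarz. On the one hand, $\norm[L^1(\Omega)]{\nablaG v}\le\abs{\Omega}^{1/2}\norm{\nablaG v}\le\abs{\Omega}^{1/2}\enorm[\grid]{v}$. On the other hand, writing $\abs{\jump{v}}=\hG^{1/2}\cdot\hG^{-1/2}\abs{\jump{v}}$ and applying Cauchy--Schwarz on each face and then over $\sides$, one gets $\int_{\sides}\abs{\jump{v}}\ds\le\bigl(\sum_{S\in\sides}h_S\abs{S}\bigr)^{1/2}\norm[\Gamma]{\hG^{-1/2}\jump{v}}$. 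Since $\hG|_S=\abs{S}^{1/(d-1)}$, we have $h_S\abs{S}=\abs{S}^{d/(d-1)}\Cleq h_\elm^d=\abs{\elm}$ for any $S\subset\partial\elm$ by shape regularity, and, each simplex having $d+1$ faces, $\sum_{S\in\sides}h_S\abs{S}\Cleq\sum_{\elm\in\grid}\abs{\elm}=\abs{\Omega}$. Thus $\int_{\sides}\abs{\jump{v}}\ds\Cleq\abs{\Omega}^{1/2}\enorm[\grid]{v}$, and combining the two estimates yields $\norm[L^1(\Omega)]{\nablaG v}+\int_{\sides}\abs{\jump{v}}\ds\Cleq\enorm[\grid]{v}$, which is the second bound.

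This is essentially the computation of \cite[Lemma~2]{BuffaOrtner:09}, so I do not anticipate a serious obstacle; the only delicate point is the first step, namely verifying that, once the elementwise boundary integrals are reassembled, the orientation conventions produce exactly $\jump{v}$ on $\mathring\sides$ and nothing on $\partial\Omega$, and that these boundary integrals are well defined --- which is precisely where the facewise $H^1$ trace inequality enters. Everything else reduces to Cauchy--Schwarz and the elementary face-counting bound $\sum_{S\in\sides}h_S\abs{S}\Cleq\abs{\Omega}$.
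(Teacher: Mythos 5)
Your proof is correct, and since the paper does not give its own argument for this proposition (it is imported verbatim from \cite[Lemma~2]{BuffaOrtner:09}), your elementwise integration by parts followed by Cauchy--Schwarz and face-counting is exactly the intended route. One point worth flagging: as printed, the middle quantity in the proposition reads $\norm[L^1(\Omega)]{v}+\int_{\sides}\abs{\jump{v}}\ds$, but your computation correctly produces $\norm[L^1(\Omega)]{\nablaG v}$ in place of $\norm[L^1(\Omega)]{v}$, and indeed the first inequality is false as printed (take $v\in C_0^\infty(\Omega)$ with no jumps: one would need $\norm[L^1]{\nabla v}\Cleq\norm[L^1]{v}$, which fails). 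The chain $|Dv|(\Omega)\Cleq\enorm[\grid]{v}$ is nonetheless valid, so the misprint in the intermediate term is harmless for the paper's downstream use, and you implicitly corrected it.
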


\subsection{A posteriori error bound}
\label{sec:aposteriori}

We recall the a posteriori results from
\cite{KarakashianPascal:03,BonitoNochetto:10,BustinzaGaticaCockburn:2005,BeckerHansboLarson:2003};
compare also with \cite{CarstensenGudiJensen:09}. 

For $v\in \V(\grid)$, we define the local error indicators for
$\elm\in\grid$ by 
\begin{align*}
  \est_\grid(v,\elm):=\Big(\int_{\elm}\hG^2|f+\Delta
      v|^2\dx+\int_{\partial\elm\cap\Omega} \hG\jump{\nabla
  v}^2\ds+\sigma\int_{\partial\elm}\hG^{-1}\jump{v}^2\ds\Big)^{1/2};
\end{align*}
when $v=\uG$, we shall write $\est_\grid(\elm):=\est_\grid(\uG,\elm)$.
Also, for $\mathcal{M}\subset\grid$, we set 
\begin{align*}
  \est_\grid(v,\mathcal{M}):=\Big(\sum_{\elm\in\mathcal{M}}\est(v,\elm)^2\Big)^{1/2}.
\end{align*}

\begin{prop}
  \label{prop:upper}
  Let $u\in H_0^1(\Omega)$ be the solution of \eqref{eq:weak} and $\uG\in\VG$ its respective \DGFEM approximation \eqref{ipdg} on the grid $\grid\in\grids$. Then,
  \begin{align*}
    \enorm[\grid]{u- \uG}^2\Cleq \sum_{\elm\in\grid}\est_\grid(\elm)^2.
  \end{align*}
\end{prop}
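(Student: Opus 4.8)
The plan is to derive the reliability (upper) bound for the \DGFEM error estimator by combining the coercivity~\eqref{eq:coercive} of the bilinear form with the conforming interpolation operator $\mathcal{I}_\grid$ from Proposition~\ref{P:dist-dGcG}, following the by now classical residual a posteriori analysis for interior penalty methods. Write $e:=u-\uG\in H^1(\grid)$ and split it as $e = (u - \mathcal{I}_\grid\uG) + (\mathcal{I}_\grid\uG - \uG) =: \phi + \eta$, where $\phi\in H^1_0(\Omega)$ is conforming and $\eta = \mathcal{I}_\grid\uG - \uG$ lives purely in the non-conforming part. First I would use coercivity to get $\alpha\enorm{e}^2 \le \bilin{e}{e} = \bilin{e}{\phi} + \bilin{e}{\eta}$ (using that the extended bilinear form is meaningful on $H^1(\grid)$, as noted after~\eqref{eq:coercive}).

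For the conforming part $\bilin{e}{\phi}$, since $\phi\in H^1_0(\Omega)$ all jump and lifting contributions in~\eqref{dgbilinear} involving $\jump{\phi}$ vanish, so $\bilin{u}{\phi} = \scp[\Omega]{\nabla u}{\nabla\phi} = \scp[\Omega]{f}{\phi}$ by~\eqref{eq:weak}, while $\bilin{\uG}{\phi}$ reduces (after integration by parts element-wise, moving $\mean{\nabla\uG}\cdot\jump{\phi}=0$ aside) to the standard element and jump residual terms tested against $\phi$. Hence $\bilin{e}{\phi}$ equals a sum over elements of $\scp[\elm]{f+\Delta\uG}{\phi}$ plus a sum over interior faces of $\scp[\side]{\jump{\nabla\uG}}{\mean{\phi}}$-type terms. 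Here I would insert a quasi-interpolant (Scott--Zhang or Clément) $\Pi_\grid\phi$ of $\phi$ into $\VG\cap H^1_0(\Omega)$, use Galerkin-type orthogonality only in the conforming subspace is \emph{not} available directly — instead one uses that $\phi = e - \eta$ and estimates crudely — actually the cleaner route is to bound $\scp[\elm]{f+\Delta\uG}{\phi}\le \|\hG(f+\Delta\uG)\|_\elm \|\hG^{-1}\phi\|_\elm$ and the face terms by $\|\hG^{1/2}\jump{\nabla\uG}\|_\side\|\hG^{-1/2}\mean{\phi}\|_\side$, then absorb $\|\hG^{-1}\phi\|$, $\|\hG^{-1/2}\phi\|_\Gamma$ against $\|\nabla\phi\|$ via broken Poincaré--Friedrichs (Proposition~\ref{prop:poincareG}, Corollary~\ref{C:Friedrichs}) and a trace inequality, and finally control $\|\nabla\phi\| \le \|\nablaG e\| + \|\nablaG\eta\|$.

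For the non-conforming part $\bilin{e}{\eta}$, I would bound it directly by continuity of the bilinear form in the energy norm, $\bilin{e}{\eta}\lesssim \enorm{e}\,\enorm{\eta}$, and then invoke Proposition~\ref{P:dist-dGcG}, which gives $\enorm{\eta}^2 = \|\nablaG\eta\|^2 + \bar\sigma\|\hG^{-1/2}\jump{\eta}\|_\Gamma^2 \lesssim \sum_\elm \int_{\partial\elm}\hG^{-1}\jump{\uG}^2\ds$, since $\jump{\eta}=\jump{\mathcal{I}_\grid\uG-\uG}=-\jump{\uG}$ (as $\mathcal{I}_\grid\uG$ is conforming) and the gradient term is controlled the same way; this is exactly (a piece of) $\sum_\elm\est_\grid(\elm)^2$. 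Collecting: $\alpha\enorm{e}^2 \lesssim \big(\sum_\elm\est_\grid(\elm)^2\big)^{1/2}\big(\enorm{e}+\enorm{\eta}\big) + \enorm{e}\enorm{\eta}$, and a Young's inequality argument absorbing $\enorm{e}$ on the left together with the already-established bound on $\enorm{\eta}$ finishes the proof.

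The main obstacle I anticipate is the careful treatment of the lifting terms $\riftG(\jump{w})$ and $\liftG(\vec\beta\cdot\jump{w})$ appearing in~\eqref{dgbilinear} for the \LDG case: one must verify, using the stability bounds~\eqref{eq:liftGstab} and the fact that these liftings vanish when tested against conforming functions (so they do not disturb the conforming part), that the extra volume integral is both consistent and bounded by $\enorm{\cdot}$, so that it contributes only terms already present in $\est_\grid$. A secondary technical point is ensuring all the constants (in particular the dependence on $\sigma$ versus $\bar\sigma$) stay uniform and that the absorption step is legitimate, i.e. that the implied constant from continuity and coercivity does not blow up; this is where the precise choice of energy norm scaling with $\bar\sigma=\max\{1,\sigma\}$ is used. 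Since this is a standard result cited from~\cite{KarakashianPascal:03,BonitoNochetto:10,BustinzaGaticaCockburn:2005,BeckerHansboLarson:2003}, I would in fact keep the exposition brief and refer to those works for the routine estimates, highlighting only the role of Propositions~\ref{prop:poincareG} and~\ref{P:dist-dGcG}.
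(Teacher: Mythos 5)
The paper does not actually prove Proposition~\ref{prop:upper}: it simply recalls the statement from the cited references \cite{KarakashianPascal:03,BonitoNochetto:10,BustinzaGaticaCockburn:2005,BeckerHansboLarson:2003}, so there is no in-paper argument to compare against. Your overall structure (coercivity of the extended form, decomposition $e=\phi+\eta$ with $\phi=u-\mathcal{I}_\grid\uG\in H_0^1(\Omega)$ and $\eta=\mathcal{I}_\grid\uG-\uG$, Proposition~\ref{P:dist-dGcG} for $\enorm[\grid]{\eta}$) is the standard one from those references.

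There is, however, a genuine gap in your treatment of $\bilin{e}{\phi}$. You initially consider inserting a conforming quasi-interpolant $\pi_\grid\phi\in\VG\cap H_0^1(\Omega)$ but then discard it in favour of a ``cleaner route'' which bounds $\scp[\elm]{f+\Delta\uG}{\phi}\le\norm[\elm]{\hG(f+\Delta\uG)}\,\norm[\elm]{\hG^{-1}\phi}$ and then claims that $\norm[\Omega]{\hG^{-1}\phi}$ can be absorbed against $\norm[\Omega]{\nabla\phi}$ by Poincar\'e--Friedrichs. This step fails: the global Friedrichs inequality gives only $\norm[\Omega]{\phi}\Cleq\norm[\Omega]{\nabla\phi}$ with an $\hG$-independent constant, so $\norm[\Omega]{\hG^{-1}\phi}$ is generically larger than $\norm[\Omega]{\nabla\phi}$ by a factor $\hG^{-1}$, and the broken Poincar\'e inequality of Proposition~\ref{prop:poincareG} only controls $\norm[{\omegaG(\elm)}]{\hG^{-1}(\phi-\bar\phi_\elm)}$, i.e.\ with a local mean subtracted. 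The $\hG$-scaling in $\est_\grid(\elm)$ is precisely designed to match interpolation estimates, not raw $L^2$-norms of $\phi$. The interpolant really must be inserted: since $\pi_\grid\phi\in\VG$ one has $\scp[\Omega]{f}{\pi_\grid\phi}=\bilin{\uG}{\pi_\grid\phi}$ by~\eqref{ipdg}, and since $\jump{\pi_\grid\phi}=0$ this identity reduces to $\scp[\Omega]{\nablaG\uG}{\nabla\pi_\grid\phi}$ plus terms containing $\jump{\uG}$ tested against $\mean{\nabla\pi_\grid\phi}$ or $\jump{\nabla\pi_\grid\phi}$, which are absorbed into the penalty contribution of $\est_\grid$ after a trace/inverse estimate. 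Your parenthetical remark that Galerkin-type orthogonality ``is not available directly'' is therefore an over-statement; it is available from~\eqref{ipdg} modulo such $\jump{\uG}$-terms, and this is precisely what makes the argument close. Replacing the ``cleaner route'' with the interpolant insertion and the corresponding local interpolation estimates repairs the proof; the rest of your plan (continuity for $\bilin{e}{\eta}$, Proposition~\ref{P:dist-dGcG}, Young's inequality to absorb $\enorm[\grid]{e}$) is sound, provided you also track the lifting terms as you indicate.
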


The efficiency of the estimator follows with the standard bubble
function technique of Verf\"urth~\cite{Verfuerth:96,Verfuerth:2013};
compare also with \cite[Theorem~3.2]{KarakashianPascal:03},
\cite[Lemma 4.1]{Gudi:10} and Proposition~\ref{prop:lower8} below.
\begin{prop}\label{prop:lower}
  Let $u\in H_0^1(\Omega)$ be the solution of \eqref{eq:weak}
  and let $\grid\in\grids$. Then, for all $v\in\VG$ and $\elm\in\grid$, we have 
  \begin{multline*}
    \int_{\elm}\hG^2|f+\Delta
      v|^2\dx+\int_{\partial\elm\cap\Omega} \hG\jump{\nabla
  v}^2\ds\\
    \Cleq \norm[\omegaG(\elm)]{u-v}^2+\norm[\omegaG(\elm)]{\nablaG (u-v)}^2+\osc(\neighG(\elm),f)^2,
  \end{multline*}
with data-oscillation defined by
  \begin{align*}
    \osc(\mathcal{M},f):=\Big(\sum_{\elm'\in\mathcal{M}}\osc(\elm,f)^2\Big)^{1/2},\quad\text{where}\quad\osc(\elm,f):=\inf_{f_{\elm}\in\P_{r-1}}\norm[\elm]{\hG (f-f_{\elm})},
  \end{align*}
  for all $\mathcal{M}\subset\grid$. In particular, this implies  
  \begin{align*}
    \est_\grid(v,\elm)\Cleq \enorm[\neighG(\elm)]{v-u}+\osc(\neighG(\elm),f).
  \end{align*}
\end{prop}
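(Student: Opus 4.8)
The plan is to run the classical residual-type efficiency argument of Verf\"urth with interior and face bubble functions, invoking only the weak formulation \eqref{eq:weak} and never the discrete equation \eqref{ipdg}, so that the bound automatically holds for every $v\in\VG$ and not only for $\uG$. Fix $\elm\in\grid$ and abbreviate the element residual $R_\elm:=(f+\Delta v)|_\elm$, which is well defined since $v|_\elm\in\P_r(\elm)$, and the jump residual $J_S:=\jump{\nabla v}|_S$ for the interior faces $S\subset\partial\elm$.

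For the element residual I would pick $f_\elm\in\P_{r-1}$ realising $\osc(\elm,f)=\norm[\elm]{\hG(f-f_\elm)}$, set $\bar R_\elm:=f_\elm+\Delta v\in\P_{r-1}(\elm)$, and test against $w:=b_\elm\bar R_\elm$, where $b_\elm\in H_0^1(\elm)$ is the interior bubble extended by zero, $0\le b_\elm\le1$. Using the finite-dimensional norm equivalence $\norm[\elm]{\bar R_\elm}^2\Cleq\int_\elm\bar R_\elm^2b_\elm\dx$, the inverse estimate $\norm[\elm]{\nabla w}\Cleq\hG^{-1}\norm[\elm]{\bar R_\elm}$, an elementwise integration by parts (the boundary terms vanish since $w|_{\partial\elm}=0$), and $\scp{\nabla u}{\nabla w}=\scp{f}{w}$ from \eqref{eq:weak}, I arrive at
\[
\int_\elm\bar R_\elm^2b_\elm\dx=\scp[\elm]{\nabla(u-v)}{\nabla w}+\scp[\elm]{f_\elm-f}{w},
\]
whence, after Cauchy--Schwarz, dividing by $\norm[\elm]{\bar R_\elm}$ and adding $\norm[\elm]{\hG(f-f_\elm)}$,
\[
\norm[\elm]{\hG R_\elm}\Cleq\norm[\elm]{\nablaG(u-v)}+\osc(\elm,f).
\]

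For the jump residual and an interior face $S\subset\partial\elm$ I would use the face bubble $b_S\in H_0^1(\omegaG(S))$, $0\le b_S\le1$, and a polynomial extension $\tilde J_S$ of $J_S$ to $\omegaG(S)$, and test against $w:=b_S\tilde J_S$ extended by zero, so that $w\in H_0^1(\Omega)$ with $\supp w\subset\omegaG(S)$. Using the norm equivalence $\norm[S]{J_S}^2\Cleq\int_SJ_S^2b_S\ds$, the bubble scalings $\norm[\elm']{w}\Cleq\norm[S]{\hG^{1/2}J_S}$ and $\norm[\elm']{\nabla w}\Cleq\norm[S]{\hG^{-1/2}J_S}$ for the two elements $\elm'$ forming $\omegaG(S)$, integration by parts over each of these two elements, and again \eqref{eq:weak}, I get
\[
\int_SJ_Sw\ds=\sum_{\elm'\subset\omegaG(S)}\Big(\int_{\elm'}R_{\elm'}w\dx-\scp[\elm']{\nabla(u-v)}{\nabla w}\Big);
\]
estimating the right-hand side and re-inserting the element-residual bound on these elements (the local mesh size varies only by shape-regularity factors over $\omegaG(S)$), I obtain $\norm[S]{\hG^{1/2}J_S}\Cleq\norm[\omegaG(\elm)]{\nablaG(u-v)}+\osc(\neighG(\elm),f)$, since $\omegaG(S)\subset\omegaG(\elm)$ and both elements sharing $S$ lie in $\neighG(\elm)$.

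Summing the element bound and the face bounds over the boundedly many interior faces of $\elm$ yields the first displayed inequality of the proposition; the $L^2$-term $\norm[\omegaG(\elm)]{u-v}$ on its right-hand side is in fact not needed for this argument. For the ``in particular'' claim I would finally treat the penalty part of $\est_\grid(v,\elm)$: since $u\in H_0^1(\Omega)$ we have $\jump{u}=0$ on interior faces and $u=0$ on $\partial\Omega$, hence $\jump{v}=\jump{v-u}$ on every face of $\elm$ and $\sigma\int_{\partial\elm}\hG^{-1}\jump{v}^2\ds\le\bar\sigma\norm[\Gamma(\neighG(\elm))]{\hG^{-1/2}\jump{v-u}}^2\le\enorm[\neighG(\elm)]{v-u}^2$; combining this with the first two terms and $\norm[\omegaG(\elm)]{\nablaG(u-v)}\le\enorm[\neighG(\elm)]{v-u}$ gives $\est_\grid(v,\elm)\Cleq\enorm[\neighG(\elm)]{v-u}+\osc(\neighG(\elm),f)$. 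No genuinely hard step is involved; the most technical point is the jump-residual estimate, where one must ensure the test function is admissible in \eqref{eq:weak} and supported in $\omegaG(S)$, carry out the two-sided integration by parts consistently, re-absorb the element residuals via the previous step, and keep the powers of $\hG$ and the neighbourhood bookkeeping straight --- the unavoidable oscillation term, $f$ being merely $L^2$, being produced by the polynomial best-approximation in the element step.
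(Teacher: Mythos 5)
Your argument is correct and is exactly the Verf\"urth bubble-function technique that the paper invokes for this proposition (it gives no detailed proof, merely citing Verf\"urth and Karakashian--Pascal and pointing forward to Proposition~\ref{prop:lower8}, which carries out the analogous computation in the limit-space setting with discrete bubbles). Your observation that the $L^2$-term $\norm[\omegaG(\elm)]{u-v}$ is not actually needed on the right-hand side, and your handling of the penalty part via $\jump{v}=\jump{v-u}$, are both accurate.
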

%
%
\begin{rem}
    Note that the presented theory obviously applies to all locally
    equivalent estimators as well; compare e.g. with
    \cite{KarakashianPascal:03,BonitoNochetto:10,BustinzaGaticaCockburn:2005,BeckerHansboLarson:2003,CarstensenGudiJensen:09}. For
    the sake of a unified 
    presentation, we restrict ourselves to the above representation. 
\end{rem}

\subsection{Adaptive discontinuous Galerin finite element method (\ADGM)}
\label{sec:ADGFEM}

The adaptive algorithm, whose convergence will be shown below, reads as follows.
\begin{algo}[\ADGM]\label{algo:ADGFEM} Starting from an initial triangulation $\grid_0$,
  the adaptive algorithm is an iteration of the following form 

  \centering
  \begin{minipage}{0.6\linewidth}
    \begin{enumerate}
    \item $\uk=
      \SOLVE(\VG[\grid_k])$;
    \item
      $\{\est_k(\elm)\}_{\elm\in\grid_k}=\ESTIMATE(\uk,\gridk)$;
    \item
      $\markedk=\MARK\big(\{\est_k(\elm)\}_{\elm\in\grid_k},\gridk\big)$;
    \item $\gridk[k+1]=\REFINE(\gridk,\markedk)$; increment $k$.
    \end{enumerate}
  \end{minipage}
\end{algo}
Here we have used the notation $\est_k(\elm):=\est_{\gridk}(\elm)$, for brevity.

\paragraph{\SOLVE} We assume that the output 
\begin{align*}
  \uG=\SOLVE(\VG)
\end{align*}
is the \DGFEM approximation~\eqref{ipdg} of $u$ with respect to $\VG$. 

\paragraph{\ESTIMATE} We suppose that 
\begin{align*}
  \{\est_\grid(\elm)\}_{\elm\in\grid}:=\ESTIMATE(\uG,\grid)
\end{align*}
computes the error indicators from Section~\ref{sec:aposteriori}.

\paragraph{\MARK}
We assume that the output 
\begin{align*}
  \mathcal{M}:=\MARK(\{\est_\grid(\elm)\}_{\elm\in\grid},\grid)
\end{align*}
of marked elements satisfies 
\begin{align}\label{eq:mark}
  \est_\grid(\elm)\le g(\est_\grid(\mathcal{M})), \qquad\text{for
  all}~\elm\in\grid\setminus\mathcal{M}.
\end{align}
Here $g:\R_0^+\to\R_0^+$ is a fixed function, which is continuous in
$0$ with $g(0)=0$, i.e. $\lim_{\epsilon\to 0}g(\epsilon)=0$.

\paragraph{\REFINE}
We assume for $\mathcal{M}\subset\grid\in\grids$, that for the refined grid
\begin{align*}
  \tilde\grid:=\REFINE(\grid,\mathcal{M})
\end{align*}
we have 
\begin{align}\label{eq:refine}
  \elm \in\mathcal{M}\quad\Rightarrow \quad \elm\in \grid\setminus\tilde\grid,
\end{align}
i.e., each marked element is refined at least once. 

\subsection{The main result}
\label{sec:main-result}

The main result of this work states that the sequence of
discontinuous Galerkin approxiations, produced by \ADGM, converges to
the exact solution of~\eqref{eq:elliptic}.
\begin{thm}\label{thm:main}
  We have that 
  \begin{align*}
    \est_k(\gridk)\to 0 \quad\text{as}~k\to\infty.
  \end{align*}
  In particular, this implies that 
  \begin{align*}
    \enorm[k]{u-\uk}\to 0\quad\text{as}~k\to\infty.
  \end{align*}

\end{thm}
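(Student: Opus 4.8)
\emph{Proof strategy.} The plan is to follow the template of Morin, Siebert and Veeser~\cite{MorinSiebertVeeser:08}, the new difficulties stemming from the non-conformity of the spaces $\VG[\gridk]$ and from the negative power of $\hG$ in the indicators. First note that $\uk=\SOLVE(\VG[\gridk])$ together with coercivity~\eqref{eq:coercive} and the broken Friedrichs inequality of Corollary~\ref{C:Friedrichs} yields
\[
  \alpha\enorm[k]{\uk}^2\le\bilin[\gridk]{\uk}{\uk}=\scp[\Omega]{f}{\uk}\le\norm{f}\,\norm{\uk}\Cleq\norm{f}\,\enorm[k]{\uk},
\]
so that $(\uk)_k$ is bounded in the energy norm. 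The first substantial step is to identify the object to which the iterates converge: I would use the limit space $\V_\infty$ constructed in Section~\ref{sec:limit}, a subspace of $BV(\Omega)$ (cf.\ Proposition~\ref{P:|Dv|<dG}) obtained as an appropriate closure of $\bigcup_k\VG[\gridk]$. The essential feature, exploited repeatedly below, is that membership in $\V_\infty$ keeps $\int_\Gamma\hG^{-1}\jump{\cdot}^2$ bounded along the sequence and hence forces the jumps to vanish on the part of $\Omega$ that is refined infinitely often --- this is precisely how the factor $\hG^{-1}$ is neutralised. Invoking the almost-best-approximation estimate of Section~\ref{sec:uinfty} (a Gudi-type perturbed C\'ea lemma~\cite{Gudi:10}, whose right-hand side is a best-approximation term that vanishes by density of the adaptive spaces in $\V_\infty$, plus consistency terms that vanish with the local mesh size), one upgrades boundedness to strong convergence $\uk\to u_\infty$ in $\enorm[k]{\cdot}$, where $u_\infty\in\V_\infty$ is the generalised Galerkin solution of the limiting problem.

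Next I would set up the usual mesh dichotomy. Write $\gridk=\gridk^+\cup\gridk^0$, with $\gridk^+$ the elements of $\gridk$ contained in $\gridl$ for every $l\ge k$ and $\gridk^0:=\gridk\setminus\gridk^+$; the families $(\gridk^+)_k$ increase to $\grid_\infty^+:=\bigcup_k\gridk^+$, and $\markedk\subseteq\gridk^0$ by~\eqref{eq:refine}. Standard newest-vertex-bisection bookkeeping --- only finitely many elements of any bounded generation ever occur in the run, each refined at most once --- gives $\|\hG[\gridk]\|_{L^\infty(\bigcup\gridk^0)}\to 0$, and in particular $\max_{\elm\in\markedk}h_\elm\to 0$. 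Split the estimator accordingly, $\est_k(\gridk)^2=\est_k(\gridk^+)^2+\est_k(\gridk^0)^2$, and treat the two parts separately.

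For the refined part $\gridk^0$: the contribution $\int_\elm\hG^2|f|^2$ vanishes by dominated convergence, since $\hG[\gridk]\to 0$ there and $f\in L^2(\Omega)$; the penalty contribution $\sum_{\elm\in\gridk^0}\sigma\int_{\partial\elm}\hG^{-1}\jump{\uk}^2$ is split into the jumps of $\uk-u_\infty$, bounded by $\enorm[k]{\uk-u_\infty}^2\to 0$, and the jumps of $u_\infty$, which vanish on the infinitely refined region by the construction of $\V_\infty$; and the remaining element- and gradient-jump residuals are handled via the local efficiency bound of Proposition~\ref{prop:lower} together with the strong convergence, using that $\V_\infty$ contains all $C_0^\infty$ functions supported in the infinitely refined region (extended by zero), so that $u_\infty$ solves the Poisson equation there and its residual vanishes. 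This gives $\est_k(\gridk^0)\to 0$, hence also $\est_k(\markedk)\le\est_k(\gridk^0)\to 0$. For the persistent part $\gridk^+$: each of its elements is eventually never marked, so the marking property~\eqref{eq:mark} gives $\est_k(\uk,\elm)\le g(\est_k(\markedk))\to g(0)=0$; passing to the limit on the eventually fixed patch of $\elm$ and using $\uk\to u_\infty$ shows that the limiting indicator of $u_\infty$ vanishes on every $\elm\in\grid_\infty^+$, and then writing $\est_k(\uk,\elm)^2$ as this vanishing limit plus an increment that is locally Lipschitz in the energy norm, summing over $\gridk^+$, and using $\enorm[k]{\uk-u_\infty}\to 0$ with the uniform bound, one obtains $\est_k(\gridk^+)\to 0$. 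Altogether $\est_k(\gridk)\to 0$, and the energy-error statement follows from the a posteriori upper bound of Proposition~\ref{prop:upper} (equivalently, $\est_k(\gridk)\to 0$ forces $u_\infty=u$).

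I expect the genuine obstacles to be the two ingredients imported from Sections~\ref{sec:limit} and~\ref{sec:uinfty}. Constructing $\V_\infty$ requires it to be simultaneously large enough to contain every adaptive iterate and small enough that its elements carry no jumps across the infinitely refined region --- exactly where non-conformity and the negative power of $\hG$ interact, and where the broken-Sobolev and $BV$ machinery of Propositions~\ref{P:dist-dGcG}--\ref{P:|Dv|<dG} and the local Poincar\'e inequality of Proposition~\ref{prop:poincareG} is needed. Proving the almost-best-approximation estimate is harder still: the bilinear forms $\bilin[\gridk]{\cdot}{\cdot}$ are mesh dependent and only asymptotically consistent on $\V_\infty$, so one cannot simply quote the Hilbert-space C\'ea lemma as in~\cite{MorinSiebertVeeser:08} but must adapt Gudi's medius-type analysis~\cite{Gudi:10} to the limit space and control the resulting consistency error carefully. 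Everything else --- the dichotomy, the mesh lemma, and the bookkeeping around~\eqref{eq:mark} and~\eqref{eq:refine} --- is then essentially as in the conforming theory.
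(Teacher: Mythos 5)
Your overall strategy is the same as the paper's: construct the limit space $\V_\infty$, prove strong convergence $\uk\to u_\infty$ via a Gudi-type almost-best-approximation estimate, split the estimator according to the mesh dichotomy, kill each piece, and finish with the upper bound of Proposition~\ref{prop:upper}. However, there is a concrete gap in the treatment of the refined region. You propose a two-part split $\gridk=\gridk^+\cup(\gridk\setminus\gridk^+)$ and plan to control the element and gradient-jump residuals over $\gridk\setminus\gridk^+$ via the efficiency bound of Proposition~\ref{prop:lower} together with strong convergence. But Proposition~\ref{prop:lower} compares $\uk$ to the exact solution $u$, whereas the strong convergence you have at that point is to $u_\infty$, and you do not yet know $u_\infty=u$ (that is essentially what is being proved). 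So ``lower bound + strong convergence'' does not close by itself on the whole refined region. Your remark that ``$u_\infty$ solves the Poisson equation there'' gestures at the right fix, but that is precisely a new lower bound relative to $u_\infty$, and it is not free: the bubble functions required for it must live in $\V_\infty$, which is only guaranteed on elements whose whole neighbourhood is eventually uniformly refined $n$ times.

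The paper therefore uses a \emph{three}-part split $\gridk=\gridk^0\cup\gridk^{++}\cup\gridk^\star$, not your two-part one. On $\gridk^0$ (elements whose neighbourhoods get $n$ uniform refinements in some later mesh) a new efficiency bound relative to $u_\infty$, Proposition~\ref{prop:lower8}, is proved via discrete bubble functions in $\V_\infty$; this combines with Theorem~\ref{Thm:uk->u8} and the oscillation bound~\eqref{eq:oscG0} to give $\est_k(\gridk^0)\to0$. On the buffer $\gridk^\star$, whose measure vanishes by~\eqref{eq:MSV(4.15)}, one uses Proposition~\ref{prop:lower} (relative to $u$) but exploits only boundedness plus uniform integrability over sets of vanishing measure --- no identification $u_\infty=u$ needed. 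On $\gridk^{++}$ the marking property gives element-wise vanishing as you say, but the passage from pointwise to summed vanishing requires the integrable-majorant (generalised dominated convergence) argument with the majorant $M_k$ built from the local efficiency bound; ``locally Lipschitz increment'' is too vague to substitute for this. Without the $\gridk^\star$ buffer and without Proposition~\ref{prop:lower8}, your proof sketch has a hole exactly where the negative power of $\hG$ and the non-conformity interact, which is the crux of the whole theorem.
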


\section{A limit space and quasi-interpolation}
\label{sec:limit}
In this section we shall first introduce a new limit space $\V_\infty$ of the sequence of
adaptively constructed discontinuous finite element spaces
$\{\V(\gridk)\}_{k\in\N}$. A new
quasi-interpolation operator is then introduced in
Section~\ref{sec:quasi-ipol} in order to 
to prove that there exists a unique Galerkin solution $u_\infty$ of a
generalised discontinuous Galerkin problem in $\V_\infty$.

\subsection{Sequence of partitions}
\label{sec:prop-gridk}
The \ADGM produces a sequence $\{\gridk\}_{k\in\N_0}$ of nested
admissible partitions of $\Omega$. Following
\cite{MorinSiebertVeeser:08}, we define
\begin{align*}
  \grid^+:=\bigcup_{k\ge0}\bigcap_{j\ge
  k}\gridk[j],\qquad\text{and}\qquad\Omega^+:=\Omega(\grid^+)
\end{align*}
to be the set and domain of all elements, respectively, which eventually will not be refined any
more; here
$\Omega(X):=\operatorname{interior}\left(\bigcup\{\elm:\elm\in
  X\}\right)$ for a collection of elements $X$. 
We also define the complementary domain
$\Omega^-\definedas\Omega\setminus\Omega^+$. 
For the ease of presentation, in what follows, we shall
replace  subscripts $\gridk$ by $k$ to indicate the underlying
triangulation, e.g. we write $\neighk(\elm)$ instead of
$\neighG[\gridk](\elm)$.

The following result states that neighbours of elements in $\grid^+$
are eventually also elements of $\grid^+$; cf.,
\cite[Lemma 4.1]{MorinSiebertVeeser:08}. 
\begin{lem}\label{L:Nk=NK}
  For $\elm\in\grid^+$ there exists a constant $K=K(\elm)\in\N_0$, such that 
  \begin{align*}
    \neighk(\elm)=\neighk[K](\elm)\qquad\text{for all}~ k\ge K,
  \end{align*}
  i.e., we have $\neighk(\elm)\subset\grid^+$ for all $k\ge K$.
\end{lem}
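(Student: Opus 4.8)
The plan is to show that the neighbourhood of a fixed element $\elm\in\grid^+$ stabilises after finitely many refinement steps. First I would observe that, by definition of $\grid^+$, there is some index $k_0$ with $\elm\in\gridk[j]$ for all $j\ge k_0$; thus $\elm$ is never refined after step $k_0$. The key point is that newest vertex bisection is a \emph{local} refinement rule on a shape regular family: whether a given element $\elm$ gets bisected in the step $\gridk\to\gridk[k+1]$ is determined (through the recursive closure / compatible-patch mechanism of \cite{Baensch:91,Kossaczky:94,Maubach:95,Traxler:97}) by marking information in a bounded neighbourhood of $\elm$, and conversely the elements that can appear as new neighbours of $\elm$ only arise from bisecting elements currently touching $\elm$.

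Concretely, I would argue as follows. Since $\elm\in\grid^+$, pick $k_0$ as above. For $k\ge k_0$ the element $\elm$ itself is fixed, so the only way $\neighk(\elm)$ can change from step $k$ to step $k+1$ is that some element $\elm'\in\neighk(\elm)$ with $\elm'\neq\elm$ is refined. Each such $\elm'$ is contained in $\omegak(\elm)$, a patch of uniformly boundedly many elements whose diameters are comparable to $\diam(\elm)$ by shape regularity. By the finite-overlap and boundedness properties of the refinement, each $\elm'\in\neighk(\elm)$ can be bisected at most a bounded number of times before all its descendants that still meet $\elm$ are smaller than a fixed fraction of $\diam(\elm)$; but any element of $\gridk[j]$ sharing at least a vertex with $\elm$ and having diameter much smaller than $\diam(\elm)$ forces, via conformity (no hanging nodes) and shape regularity, a refinement of $\elm$ itself — contradicting $\elm\in\grid^+$. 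Hence only finitely many bisections of elements in $\neighk(\elm)$ can occur over all $k\ge k_0$, so there is $K\ge k_0$ after which no element of $\neighk(\elm)$ is refined; consequently $\neighk(\elm)=\neighk[K](\elm)$ for all $k\ge K$. Since each element of $\neighk[K](\elm)$ is then never refined after step $K$, it lies in $\grid^+$, which gives $\neighk(\elm)\subset\grid^+$ for $k\ge K$.

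The main obstacle is making precise the claim that an element of $\grid^+$ cannot have arbitrarily small conforming neighbours, i.e. that persistent refinement in the patch $\omegak(\elm)$ would eventually propagate into $\elm$. This is exactly where conformity of the meshes and the structure of newest vertex bisection enter: in a conforming bisection mesh the generation (refinement level) of neighbouring elements differs by a bounded amount, so an unboundedly refined neighbour would drag $\elm$ along. I would either invoke this standard fact about conforming newest-vertex-bisection families directly, citing \cite{Baensch:91,Kossaczky:94,Maubach:95,Traxler:97} and the shape regularity of $\grids$, or mirror the corresponding argument of \cite[Lemma~4.1]{MorinSiebertVeeser:08}, whose proof uses precisely this bounded-generation-gap property; the remaining steps are then routine bookkeeping over the finitely many elements of the patch.
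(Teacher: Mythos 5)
Your proposal is correct and, up to the level of detail, reproduces the argument of \cite[Lemma~4.1]{MorinSiebertVeeser:08}, which the paper cites in lieu of a proof. The mechanism you isolate --- conformity and shape regularity bound neighbours of the permanently-unrefined element $\elm$ from below in size, so only finitely many bisections can occur in $\omegak(\elm)$ --- is precisely the one used there, so this is essentially the same approach.
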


 \begin{figure}
   {\resizebox{0.33\textwidth}{!}{\includegraphics{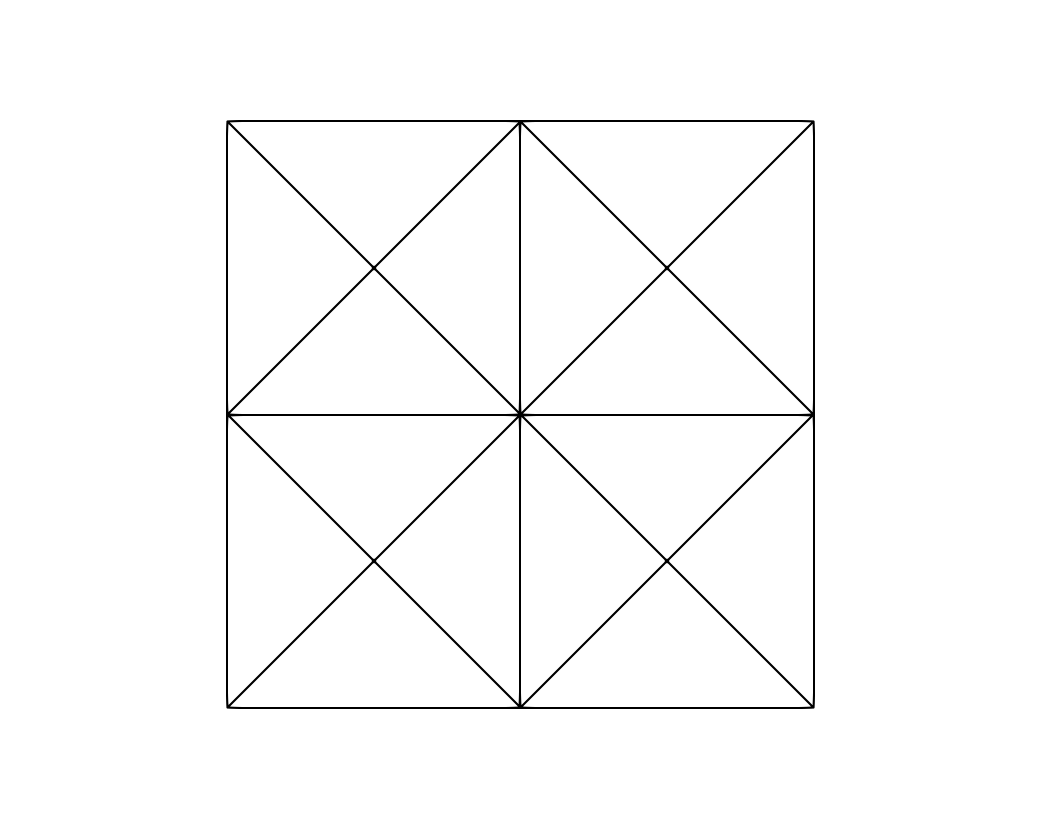}}}%
     {\resizebox{0.33\textwidth}{!}{\includegraphics{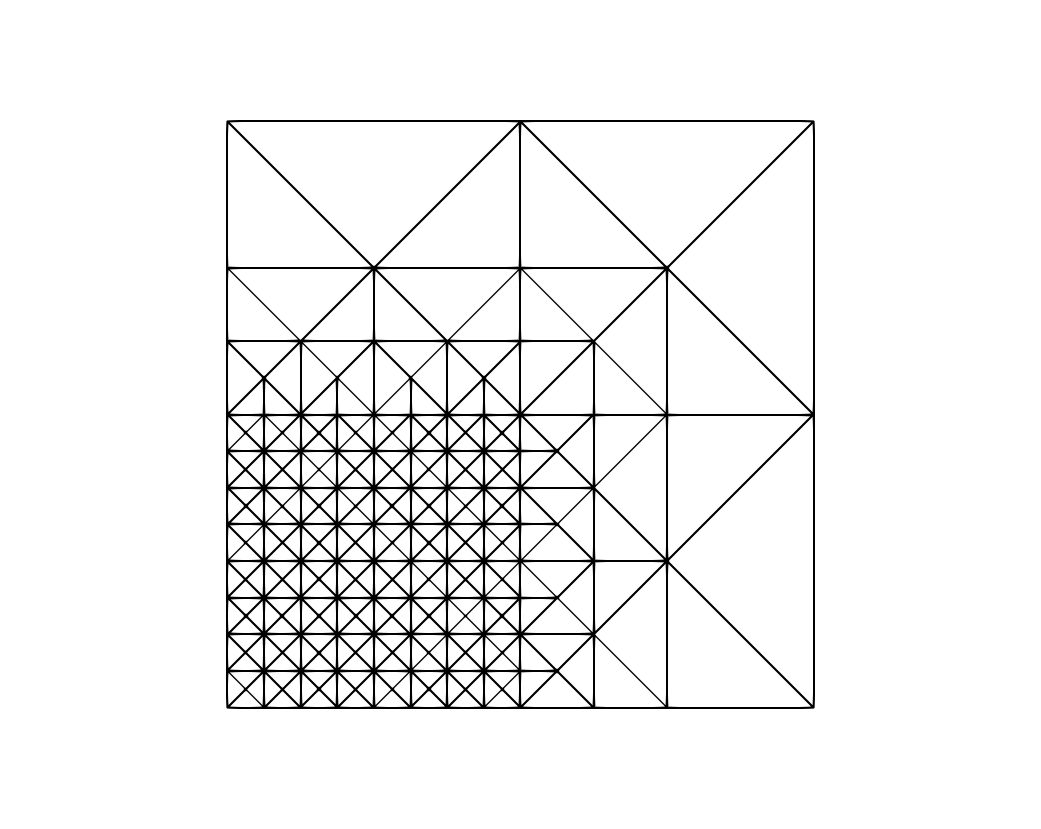}}}%
          {\resizebox{0.33\textwidth}{!}{\includegraphics{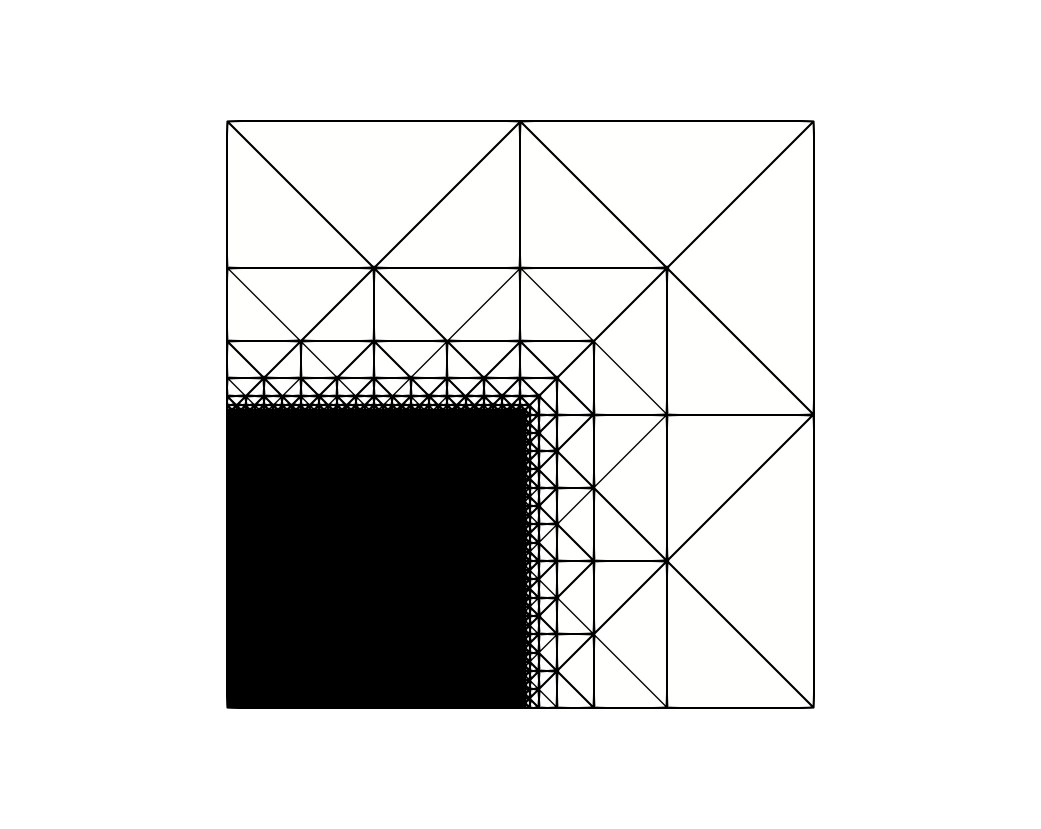}}}%
    \caption{Selection of a sequence of triangulations of
      $\Omega=(0,1)^2$, where in each iteration 
      the elements in $\Omega^{-}=[0,0.5]\times [0,0.5]$ are marked
      for refinement. The elements $\grid^+$ in the remaining domain
      $\Omega\setminus\Omega^-$ are, after some
      iteration, not refined anymore. Moreover, after some iteration,
      their whole neighbourhood is not refined anymore.}
     \label{fig:meshSeq}
 \end{figure}
Next, for a fixed $k\in\N_0$, we set 
  \begin{alignat*}{2}
    \gridk^+&:=\gridk\cap\grid^+,&\qquad\Omega_k^{+}&:=\Omega(\gridk^{+}),
    \\
    \gridk^{++}&:=\{\elm\in\gridk\colon \neighk(\elm)\subset\grid^+\},&\qquad\Omega_k^{++}&:=\Omega(\gridk^{++}),
    \\
    \gridk^-&:=\gridk\setminus
    \gridk^{++}
    &\qquad\Omega_k^-&:=\Omega(\gridk^-);
  \end{alignat*}
  compare also with Figure~\ref{fig:meshSeq}.
  This notation is also adopted for the corresponding faces, e.g., we
  denote $\sides_k^+:=\sides(\gridk^+)$ and
  $\mathring\sides_k^+:=\mathring\sides(\gridk^+)$ and correspondingly
  for all other above sub-triangulations of $\gridk$.

  The next lemma states that the meshsize of elements in
   $\gridk^-$ converges uniformly to zero; compare also
   with~\cite[(4.15) and Corollary 4.1]{MorinSiebertVeeser:08} and  \cite[Corollary 3.3]{Siebert:11}.
  \begin{lem}\label{lem:Omegastar}
      We have that 
       $
      \lim_{k\to\infty}\|h_k\chi_{\Omega_k^-}\|_{L^\infty(\Omega)}
      =0$, with $\chi_{\Omega_k^-}$ denoting the characteristic
      function of $\Omega_k^-$.

      In particular, this implies that $
       \lim_{k\to\infty}|\Omega(\neighk(\grid^-_k))\setminus\Omega^-|=
       \lim_{k\to\infty}|\Omega_k^-\setminus\Omega^-|=0$. 
    \end{lem}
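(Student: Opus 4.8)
The plan is to follow the strategy of Morin--Siebert--Veeser closely, since the lemma is essentially a translation of their results to the newest-vertex-bisection setting already in place here. I first recall the key geometric fact about newest vertex bisection: there is a uniform bound on the number of refinements that can occur in a single bisection step, and crucially the generation (bisection level) of an element strictly increases when it is refined, with the mesh size of an element of generation $g$ being comparable to $2^{-g/d}$ times a constant depending only on $\grid_0$. Hence it suffices to show that every element of $\gridk^-$ has generation tending to infinity uniformly as $k\to\infty$.

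The heart of the argument is the following: fix an arbitrary threshold $M\in\N$. I claim that for all $k$ large enough, every element $\elm\in\gridk^-$ has generation at least $M$. Suppose not; then there is a subsequence $k_j\to\infty$ and elements $\elm_j\in\gridk[k_j]^-$ of generation $<M$. Since there are only finitely many possible shapes/positions of elements of bounded generation obtained from $\grid_0$ by newest vertex bisection, by pigeonholing we may assume all the $\elm_j$ are the \emph{same} element $\elm_*$, and $\elm_*\in\gridk[k_j]$ for all $j$. Passing to a further subsequence, $\elm_*$ then lies in $\gridk[k]$ for infinitely many $k$; but the partitions are nested, so once $\elm_*$ reappears it must have been present throughout, giving $\elm_*\in\bigcap_{j\ge k_0}\gridk[j]$ for some $k_0$, i.e. $\elm_*\in\grid^+$. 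By Lemma~\ref{L:Nk=NK} there is $K=K(\elm_*)$ with $\neighk(\elm_*)\subset\grid^+$ for all $k\ge K$, which means $\elm_*\in\gridk^{++}$ for all $k\ge K$, hence $\elm_*\notin\gridk^-$ for $k\ge K$ --- contradicting $\elm_*\in\gridk[k_j]^-$ for arbitrarily large $k_j$. This establishes that $\|h_k\charfct{\Omega_k^-}\|_{L^\infty(\Omega)}\le C(\grid_0)\,2^{-M/d}$ for all sufficiently large $k$, and since $M$ was arbitrary the limit is zero.

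For the second assertion, I would argue as follows. Because $\Omega_k^-$ is covered by finitely many elements each of diameter at most $\|h_k\charfct{\Omega_k^-}\|_{L^\infty(\Omega)}=:\eta_k\to 0$, and by shape regularity the number of elements in a bisection patch is uniformly bounded, the neighbourhood $\Omega(\neighk(\gridk^-))$ is contained in an $\eta_k$-neighbourhood (in the sense of a union of adjacent simplices) of $\Omega_k^-$; hence $|\Omega(\neighk(\gridk^-))\setminus\Omega^-| \le |\Omega_k^-\setminus\Omega^-| + |(\text{$\eta_k$-collar of }\Omega_k^-)|$. It therefore suffices to show $|\Omega_k^-\setminus\Omega^-|\to 0$. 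For this I note that $\Omega_k^-\setminus\Omega^+ $ is decreasing-in-the-limit in the sense that any point $x\in\Omega^+$ lies in the interior of some fixed element of $\grid^+$, which by the argument above eventually belongs to $\gridk^{++}$, so $x\notin\Omega_k^-$ for $k$ large; thus $\charfct{\Omega_k^-\setminus\Omega^-}(x)\to 0$ pointwise a.e. on $\Omega^+$, and since it is also zero on $\Omega^-$, dominated convergence gives $|\Omega_k^-\setminus\Omega^-|\to 0$. (Alternatively one invokes $\Omega = \Omega^+ \cup \Omega^-$ up to measure zero and that $\Omega_k^{++}\nearrow\Omega^+$ up to measure zero, which follows by the same monotonicity.)

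The main obstacle I anticipate is making the generation/pigeonhole step fully rigorous: one must be careful that ``finitely many elements of bounded generation'' is literally true for newest vertex bisection of a \emph{fixed} $\grid_0$ --- this is standard (each element of $\grid_0$ has, up to congruence and for each generation $\le M$, only boundedly many descendants, and there are finitely many elements in $\grid_0$) but deserves an explicit citation to \cite{Baensch:91,Kossaczky:94,Maubach:95,Traxler:97}. The rest (nestedness, applying Lemma~\ref{L:Nk=NK}, and the measure-theoretic limit) is routine. A minor point to handle cleanly is the passage from ``$\elm_*$ lies in $\gridk[k_j]$ for infinitely many $j$'' to ``$\elm_*\in\grid^+$'': this uses that the family is nested under refinement, so an element that disappears can never return, hence infinitely-often-present implies eventually-always-present.
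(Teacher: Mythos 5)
Your proof of the first claim is a genuinely different route from the paper's: the paper simply cites \cite[Corollary~3.3]{Siebert:11}, whereas you give a self-contained generation/pigeonhole argument. Your argument is correct --- the key facts (finitely many elements of bounded generation, nestedness implying that an element that reappears was present throughout, Lemma~\ref{L:Nk=NK} converting $\elm_*\in\grid^+$ into $\elm_*\in\gridk^{++}$ for large $k$) are all valid. It is more elementary than quoting Siebert, at the cost of a somewhat longer write-up.

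For the second claim there is a genuine gap. You reduce $|\Omega(\neighk(\gridk^-))\setminus\Omega^-|\to 0$ to $|\Omega_k^-\setminus\Omega^-|\to 0$ by writing the former as the latter plus an ``$\eta_k$-collar of $\Omega_k^-$'' and tacitly asserting the collar measure vanishes. But the collar measure is \emph{not} automatically controlled by $\eta_k\to 0$: the collar is a strip of width $O(\eta_k)$ around the interface $\partial\Omega_k^-$, and nothing you have written bounds the $(d-1)$-measure of this interface, which can grow as $k\to\infty$. In fact, showing that the collar shrinks is exactly the non-trivial content the paper addresses with the inclusion $\neighk(\gridk^-)\setminus\gridk^- \subset \gridk^{++}\setminus\gridk^{3+}$ followed by an \emph{iterated} application of the monotonicity argument (using Lemma~\ref{L:Nk=NK} twice, once for $\gridk^{++}$ and once for $\gridk^{3+}$). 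Your dominated convergence argument only proves $\charfct{\Omega_k^-\setminus\Omega^-}(x)\to 0$ --- i.e.\ that a.e.\ $x\in\Omega^+$ eventually lies in $\gridk^{++}$; it does \emph{not} prove that $x$ eventually leaves $\Omega(\neighk(\gridk^-))$, for which you would need the stronger statement that eventually the entire neighbourhood $\neighk(\elm_x)$ of the element $\elm_x\in\grid^+$ containing $x$ lies in $\gridk^{++}$. The fix is short: apply Lemma~\ref{L:Nk=NK} a second time, to each of the finitely many $\elm'\in\neighk[K]{(\elm_x)}\subset\grid^+$, to conclude $\neighk(\elm_x)\subset\gridk^{++}$ for $k$ large, hence $\neighk(\elm_x)\cap\gridk^-=\emptyset$ and $x\notin\Omega(\neighk(\gridk^-))$; then your dominated-convergence argument applies to $\charfct{\Omega(\neighk(\gridk^-))\setminus\Omega^-}$ directly, and the detour through the collar can be dropped entirely. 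As written, however, the collar step is an unjustified leap over the hardest part of the second claim, and so the proof is incomplete. Note also that the paper avoids dominated convergence altogether, using instead that $|\Omega(\grid^+\setminus\gridk^{++})|$ is monotone decreasing with a vanishing subsequence --- either approach works once the ``second application of Lemma~\ref{L:Nk=NK}'' point is made explicit.
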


    \begin{proof}
      The first claim is proved in~\cite[Corollary 3.3]{Siebert:11}.

      In order to prove the second claim, we first observe that
      \begin{align*}
        |\Omega_k^-\setminus\Omega^-|=|\Omega(\grid^+\setminus\gridk^{++})|.
      \end{align*}
      For $\ell\in\N$, it 
      follows from Lemma~\ref{L:Nk=NK} and $\#\gridk[\ell]^{+}<\infty$, 
      that there exists $K(\ell)$, such that
      $\gridk[\ell]^{+}\subset\gridk[K(\ell)]^{++}$ and thus
      \begin{align}\label{Gk+-Gk++}
        |\Omega(\grid^+\setminus \grid_{K(\ell)}^{++})|\le |
       \Omega(\grid^+\setminus\grid_\ell^+)|\to 0\qquad\text{as}~\ell\to\infty,
      \end{align}
      i.e. a subsequence of
      $\{|\Omega(\grid^+\setminus\gridk^{++})|\}_k$ vanishes. However,
      since the sequence is monotone decreasing, it must vanish as a whole. To conclude, we first realise that
      $|\Omega(\neighk(\grid^-_k))\setminus\Omega^-|\le
     |\Omega_k^-\setminus\Omega^-|+
     |\Omega(\neighk(\grid^-_k)\setminus\gridk^-)|$; it remains to
     prove that the latter term vanishes as $k\to\infty$. To this end, we observe that
      \begin{align*}
        \neighk(\grid^-_k)\setminus\gridk^-\subset\gridk^{++}\setminus\gridk^{3+},
      \end{align*}
      with
      $\gridk^{3+}=\{\elm\in\gridk\colon\neighk(\elm)\in\gridk^{++}\}$. Indeed,
      assume that $\elm\in\gridk^{3+}\cap\neighk(\gridk^-)$, then
      there exists $\elm'\in\neighk(\elm)$ with
      $\elm'\in\gridk^{++}\cap\gridk^-$; this is a
      contradiction. Consequently, we have
      \begin{align*}
        |\Omega(\neighk(\grid^-_k)\setminus\gridk^-)|\le
        |\Omega(\gridk^{++}\setminus\gridk^{3+})|\le
        |\Omega(\grid^{+}\setminus\gridk^{3+})|\to
        0\quad\text{as}~k\to\infty, 
      \end{align*}
      where the last limit follows by iterating the reasoning from~\eqref{Gk+-Gk++}. 
    \end{proof}

\subsection{The limit space}
\label{sec:space-limit}
In this section, we shall investigate the limit of the finite element
spaces $\V_k:=\V(\gridk)$, $k\in\N$. To this end, we
define
\begin{align*}
  \V_\infty:=\big\{v\in BV(\Omega)&:
                                    v|_{\Omega^-}\in
                                    H^1_{\partial\Omega\cap\partial\Omega^-}(\Omega^-)~\text{and}~v|_\elm\in\P_r~\forall\elm\in \grid^+
  \\
                                  &\quad\text{such that}~
                                    \exists \{v_k\}_{k\in\N}, v_k\in\V_k
                                    ~\text{with}~ \limsup_{k\to\infty}\enorm[k]{v_k}<\infty
  \\
                                  &\quad\text{and}~
                                   \lim_{k\to\infty}\enorm[k]{v-v_k}+\norm[\Omega]{v-v_k}=0
                                    \big\};
\end{align*}
here $H^1_{\partial\Omega\cap\partial\Omega^-}(\Omega^-)$ denotes the
space of functions from $H_0^1(\Omega)$ restricted to
$\Omega^-$. Moreover, we have extended the definition of the
piece-wise gradient to
\begin{align}\label{df:nablapw}
  \nablaG v\in L^2(\Omega)^d:\quad \nablaG v|_{\Omega^-}=\nabla
  v|_{\Omega^-}\quad\text{and}\quad \nablaG v|_\elm=\nabla
  v|_\elm~\forall \elm\in\grid^+.
\end{align}
Note that for $v\in BV(\Omega)$ there exists the $L^1$-trace of $v$ on
$\Gamma_k=\bigcup\{S:S\in\sides_k\}$; compare e.g. with the trace
theorem \cite[Theorem 4.2]{BuffaOrtner:09}. In other words, $v$ is
measurable with respect to the $(d-1)$-dimensional Hausdorff measure on
$\sides_k$ and, therefore, the term $\enorm[k]{v}$, $v\in\V_\infty$, makes sense. 
Obviously, we have $\V_k\cap C(\Omega)\subset \V_\infty$ for all $k\in
\N$ and, thus,  $\V_\infty$ is not empty.

Setting $h_+:=h_{\grid^+}$ and $\sides^+:=\sides(\grid^+)$, we define
\begin{align*}
  \scp[\infty]{v}{w}&:=\int_\Omega \nablaG v\cdot\nablaG w\dx
  + \bar\sigma\int_{\sides^+}\hG[+]^{-1}\jump{v}\jump{w}\ds,
\end{align*}
and 
$  \enorm[\infty]{v}:=\scp[\infty]{v}{v}^{1/2}$,
for all $v,w\in\V_\infty$. 



We shall next list some basic properties of the space $\V_\infty$.

\begin{prop}\label{prop:V}
  For $v\in\V_\infty$, we have 
  \begin{align*}
    \enorm[k]{v}\nearrow\enorm[\infty]{v}<\infty
    \quad\text{ as}~
    k\to\infty.
  \end{align*}
  In particular, for fixed $\ell\in\N$, let
  $\elm\in\gridk[\ell]$; then, we have
  \begin{align*}
    \int_{\{S\in\sides_k:S\subset\elm\}}h_k^{-1}\jump{v}^2\ds\nearrow \int_{\{S\in\sides^+:S\subset\elm\}}h_+^{-1}\jump{v}^2\ds,\quad\text{as}~k\to\infty.
  \end{align*}
\end{prop}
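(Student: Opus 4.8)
The plan is to establish the monotone convergence $\enorm[k]{v}\nearrow\enorm[\infty]{v}$ by separately treating the two contributions to the energy norm: the broken-gradient part and the jump part. For the gradient part, observe that for $v\in\V_\infty$ the extended piecewise gradient $\nablaG v$ from \eqref{df:nablapw} lives in $L^2(\Omega)^d$; since $\gridk$ restricted to $\grid^+$ stabilises locally (Lemma~\ref{L:Nk=NK}) and on $\Omega^-$ the function $v$ is genuinely $H^1$, one has $\norm{\nablaG[k] v}=\norm{\nablaG v}$ for all $k$ large enough on any fixed element of $\grid^+$, and on $\Omega^-_k$ the piecewise gradient converges to $\nabla v|_{\Omega^-}$. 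Actually the cleanest route is to note that $\norm{\nablaG[k]v}$ is independent of $k$ once we use the $L^2$-definition: $\nabla_{\!\texttt{pw},k}v = \nablaG v$ a.e. on $\Omega$ (the piecewise gradient on $\gridk$ agrees with the limit piecewise gradient wherever $\gridk$ and $\grid^+$ agree, i.e.\ on $\Omega^+_k$, and on $\Omega^-_k\supset\Omega^-$ the function $v|_{\Omega^-}\in H^1$ so its $\gridk$-piecewise gradient is just $\nabla v$). Hence the gradient term contributes the same value $\norm{\nablaG v}^2$ at every level.

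The substance is therefore in the jump term $\bar\sigma\norm[\Gamma_k]{\hG[k]^{-1/2}\jump{v}}^2$, and here I would prove both the monotonicity $\nearrow$ and the identification of the limit with $\bar\sigma\norm[\Gamma^+]{\hG[+]^{-1/2}\jump{v}}^2$ by a \emph{localised} argument, which simultaneously yields the second (per-element) assertion. Fix $\ell$ and $\elm\in\gridk[\ell]$. If $\elm\in\grid^+$, then by Lemma~\ref{L:Nk=NK} its subdivision never changes beyond some $K(\elm)$, so for $k\ge K(\elm)$ the faces $\{S\in\sides_k:S\subset\elm\}$ coincide with $\{S\in\sides^+:S\subset\elm\}$ together with the jump values, and the quantity is eventually \emph{constant} — trivially monotone and equal to the limit. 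If $\elm\notin\grid^+$, i.e.\ $\elm$ is eventually refined, then $\elm\subset\Omega^-$ up to a null set as $\ell$ grows; on $\Omega^-$, $v|_{\Omega^-}\in H^1_{\partial\Omega\cap\partial\Omega^-}(\Omega^-)$ has no interior jumps, so $\jump{v}=0$ on every interior face interior to $\Omega^-$, and the only surviving jump contributions sit on $\partial\Omega^-\cap\partial\Omega^+$ and on $\partial\Omega$ — but those faces are faces of $\grid^+$ as well. Thus in all cases the per-element integral is monotone and converges to $\int_{\{S\in\sides^+:S\subset\elm\}}h_+^{-1}\jump{v}^2\ds$. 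To get monotonicity cleanly I would invoke that newest-vertex bisection only \emph{adds} faces: refining an element $\elm$ into children replaces one coarse face-integral by the integrals over the finer faces with the smaller $h_k$ in the denominator, and since $v$ is polynomial on each element of $\grid^+$ (or $H^1$ on $\Omega^-$ with vanishing jumps there) the jump on any new interior face created strictly inside $\Omega^-$ is zero, while refinement of a face $S$ lying on $\partial\Omega^+$ only splits $S$ into pieces whose $h_k^{-1}$-weighted jump integrals sum up to \emph{at least} the coarse one because $h_k\le h$ pointwise after refinement. Summing the per-element statement over $\elm\in\gridk[\ell]$ and then letting $\ell\to\infty$ (using that $\V_\infty\subset BV(\Omega)$ so the $L^1$-traces on all $\Gamma_k$ exist and the tail on $\Omega^-_k\setminus\Omega^-$ is controlled, Lemma~\ref{lem:Omegastar}) gives $\enorm[k]{v}^2\nearrow\norm{\nablaG v}^2+\bar\sigma\norm[\Gamma^+]{\hG[+]^{-1/2}\jump{v}}^2=\enorm[\infty]{v}^2$; finiteness of the limit follows from the defining property $\limsup_k\enorm[k]{v_k}<\infty$ together with $\lim_k\enorm[k]{v-v_k}=0$ and the triangle inequality.

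The main obstacle I anticipate is making the dichotomy "$\elm\in\grid^+$ versus $\elm$ eventually refined" rigorous at the level of \emph{faces} rather than elements: a face $S$ of $\gridk[\ell]$ may sit on the interface between $\Omega^+$ and $\Omega^-$ and get refined only from one side, so one must argue carefully that such interface faces are stable (they are faces of $\grid^+$ by definition of $\gridk^{++}$ and Lemma~\ref{L:Nk=NK}) and that the jump of $v$ across them — measured using the one-sided $L^1$-traces from the $BV$ structure, cf.\ \cite[Theorem 4.2]{BuffaOrtner:09} — is well-defined and consistent between the $\sides_k$ and $\sides^+$ pictures. The remaining technical point is the convergence of $\int_{\Omega^-_k\setminus\Omega^-}\hG[k]^{-1}\jump{v}^2\ds$ to zero, which is not automatic since $\hG[k]^{-1}$ blows up; here one uses that $v$ agrees with the fixed $H^1(\Omega^-)$-function near $\Omega^-$ and that the "collar" $\Omega^-_k\setminus\Omega^-$ has vanishing measure by Lemma~\ref{lem:Omegastar}, combined with the approximation sequence $v_k\to v$ in $\enorm[k]{\cdot}$ to absorb the jump contributions there — this is precisely the kind of estimate that the $BV$-embedding Proposition~\ref{P:|Dv|<dG} and the quasi-interpolation of the next subsection are designed to handle, so I would defer the delicate part of it to those tools.
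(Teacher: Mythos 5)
Your opening observation about the gradient term is correct and matches the paper: $\nablaG v$ is the same $L^2$-function at every level, so the entire content of the proposition lies in the jump term. You also correctly spot where the danger lies, namely in the contribution of the faces $\sides_k\setminus\sides_k^+$ where $h_k^{-1}$ blows up. However, the proof as sketched has two genuine problems.

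\emph{First, the element-wise dichotomy is unsound.} For a fixed $\elm\in\gridk[\ell]$, the alternative you use is ``$\elm\in\grid^+$'' versus ``$\elm\notin\grid^+$, hence $\elm\subset\Omega^-$ up to a null set''. The second implication is false: $\elm\notin\grid^+$ only means $\elm$ is refined at some later step, but its descendants may all stop being refined eventually, in which case $\elm\subset\Omega^+$ while still $\elm\notin\grid^+$. So the per-element case analysis does not partition the situation in the way you intend, and the reduction to ``jumps vanish on $\Omega^-$'' does not cover all surviving faces inside $\elm$. The paper avoids this by never arguing on individual elements: the sets it needs are $\gridk^+$, $\gridk^{++}$ and $\sides_k^+$, and the key fact is $\sides_k^+=\sides_m\cap\sides_k$ for $m$ large (from Lemma~\ref{L:Nk=NK}), not a dichotomy on a fixed $\elm$.

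\emph{Second, the central estimate is deferred, not proved.} You correctly recognise that the real difficulty is showing
$\int_{\sides_k\setminus\sides_k^+}h_k^{-1}\jump{v}^2\ds\to 0$,
because the measure of the collar shrinks while $h_k^{-1}$ diverges, so a crude measure argument cannot close it. You propose to ``defer the delicate part'' to Proposition~\ref{P:|Dv|<dG} and the quasi-interpolation of Section~\ref{sec:quasi-ipol}; but those later tools (in particular Lemma~\ref{lem:poincare} and the interpolation machinery) themselves invoke Proposition~\ref{prop:V}, so using them here would be circular. The paper instead closes the gap with a short, self-contained Cauchy argument that is worth internalising: (i) $\enorm[k]{v}\le\enorm[m]{v}$ for $m\ge k$ by $h_m\le h_k$ and $\Gamma_k\subset\Gamma_m$; (ii) boundedness follows from the defining approximating sequence via $\enorm[k]{v}\le\enorm[k]{v-v_k}+\enorm[k]{v_k}$; (iii) a monotone bounded sequence is Cauchy; and (iv) for $m>k$ large,
\[
|\enorm[m]{v}^2-\enorm[k]{v}^2|\ \ge\ (2^{1/(d-1)}-1)\,\bar\sigma\int_{\sides_k\setminus\sides_k^+}h_k^{-1}\jump{v}^2\ds,
\]
because each refined face sees its weight multiplied by at least $2^{1/(d-1)}$, while $\sides_k^+=\sides_m\cap\sides_k$. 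The Cauchy property then forces the right-hand side to vanish, which is exactly the transient contribution you could not otherwise control, and the localised second claim follows by the same argument restricted to faces inside $\elm$. I recommend replacing your case analysis and deferral with this monotone--Cauchy argument.
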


\begin{proof}
  Since $v\in\V_\infty$, there exists $\{v_k\}_{k\in\N}, v_k\in\V_k$
  with $\lim_{k\to\infty}\enorm[k]{v-v_k}=0$ and 
  $\limsup_{k\to\infty}\enorm[k]{v_k}<\infty$. 
  We first observe that 
  \begin{align*}
    \enorm[k]{v}\le \enorm[k]{v-v_k}+\enorm[k]{v_k}<\infty
  \end{align*}
  uniformly in $k$. 
  Thanks to the mesh-size reduction, i.e. $h_m\le h_k$ for all $m\ge
  k$, we conclude that
  \begin{align*}
      \int_{\sides_k}h_k^{-1}\jump{v}^2\ds \le
      \int_{\sides_k}h_m^{-1}\jump{v}^2\ds \le
      \int_{\sides_m}h_m^{-1}\jump{v}^2\ds, 
    \end{align*}
    thanks to the inclusion 
    $\bigcup_{\sides\in\sides_k}\side\subset
    \bigcup_{\sides\in\sides_m}\side$. 
  
  Therefore, we have 
  $\enorm[k]{v}\le\enorm[m]{v}$ for all
  $m\ge k$ and, thus, $\{\enorm[k]{v}\}_{k\in\N}$ converges.
  Consequently, for $\epsilon>0$ there exists $K=K(\epsilon)$, such
  that for all $k\ge K$ and $m>k$ large enough, we have
  \begin{align*}
    \epsilon>|\enorm[m]{v}^2-\enorm[k]{v}^2|&=\bar\sigma
                                                 \int_{\sides_m\setminus(\sides_m\cap\sides_k)}\hG[m]^{-1}\jump{v}^2\ds-\bar\sigma
                                                 \int_{\sides_k\setminus(\sides_m\cap\sides_k)}\hG[k]^{-1}\jump{v}^2\ds
    \\
                                               &\ge (2^{1/(d-1)}-1) \,\bar\sigma
                                                 \int_{\sides_k\setminus(\sides_m\cap\sides_k)}\hG[k]^{-1}\jump{v}^2\ds
                                                 \\
    &\ge (2^{1/(d-1)}-1) \,\bar\sigma
                                                 \int_{\sides_k\setminus\sides_k^+}\hG[k]^{-1}\jump{v}^2\ds.
  \end{align*}
  This follows from the fact
  that $\hG[m]|_\side \le 2^{-1/(d-1)}\hG[k]|_\side $ for all
  $S\in\sides_k\setminus(\sides_m\cap\sides_k)$
   together with 
  $\sides_k^+=\sides_m\cap\sides_k$ for sufficiently large $m>k$. 
  
  Therefore, we have 
  $\int_{\sides_k\setminus\sides_k^{+}}\hG[k]^{-1}\jump{v}^2\ds\to 0$
  as $k\to\infty$ and, thus, 
  \begin{align*}
    \enorm[k]{v}^2&= 
                     \int_{\Omega} |\nablaG v|^2\dx 
                    + \bar\sigma\int_{\sides_k^+}h_k^{-1}\jump{v}^2\ds
                    +\bar\sigma\int_{\sides_k\setminus\sides_k^+}h_k^{-1}\jump{v}^2\ds
                     \to \enorm[\infty]{v}^2+0.
  \end{align*}
  This proves the first claim. The second claim is a localised version
  and follows completely analogously. 
\end{proof}

As a consequence, we have that Friedrichs and Poincar\'e inequalities
are inherited to \(\V_\infty\).
\begin{cor}[Friedrichs-\(\V_\infty\)]\label{C:FriedrichsVinfty}
  We have
  \begin{align*}
    \norm[\Omega]{v} \Cleq \enorm[\infty]{v}\quad\text{for all}~v\in\V_\infty.
  \end{align*}
\end{cor}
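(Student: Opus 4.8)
The plan is to deduce the Friedrichs inequality on $\V_\infty$ from the corresponding inequality on the discrete spaces $\V_k$ (Corollary~\ref{C:Friedrichs}) together with the convergence properties established in Proposition~\ref{prop:V}. First I would fix $v\in\V_\infty$ and, by definition of the limit space, pick a sequence $\{v_k\}_{k\in\N}$ with $v_k\in\V_k$ satisfying $\lim_{k\to\infty}\big(\enorm[k]{v-v_k}+\norm[\Omega]{v-v_k}\big)=0$ and $\limsup_{k\to\infty}\enorm[k]{v_k}<\infty$. Applying Corollary~\ref{C:Friedrichs} to each $v_k\in\V_k$ gives $\norm[\Omega]{v_k}\lesssim\enorm[k]{v_k}$ with a constant uniform in $k$ (it depends only on $\Omega$ and the shape regularity of $\grid_0$, since all $\gridk$ lie in $\grids$).

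Next I would pass to the limit. On the one hand, $\norm[\Omega]{v_k}\to\norm[\Omega]{v}$ because $\norm[\Omega]{v-v_k}\to 0$. On the other hand, I would bound $\enorm[k]{v_k}\le\enorm[k]{v-v_k}+\enorm[k]{v}$; the first term vanishes in the limit, while by Proposition~\ref{prop:V} we have $\enorm[k]{v}\nearrow\enorm[\infty]{v}$. Hence $\limsup_{k\to\infty}\enorm[k]{v_k}\le\enorm[\infty]{v}$, and combining these observations,
\begin{align*}
  \norm[\Omega]{v}=\lim_{k\to\infty}\norm[\Omega]{v_k}\Cleq\limsup_{k\to\infty}\enorm[k]{v_k}\le\enorm[\infty]{v},
\end{align*}
which is the claimed estimate. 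Alternatively, and perhaps more directly, one can avoid the auxiliary sequence entirely: the $L^1$-trace theory for $BV$ functions guarantees that $\enorm[k]{v}$ is well defined for $v\in\V_\infty$, and a close reading shows Corollary~\ref{C:Friedrichs} (which rests on Proposition~\ref{P:dist-dGcG}) actually applies to all $v\in H^1(\grid)$, in particular to $v\in\V_\infty$ restricted to $\gridk$; one then gets $\norm[\Omega]{v}\Cleq\enorm[k]{v}\le\enorm[\infty]{v}$ immediately by Proposition~\ref{prop:V}, without any limiting argument.

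I do not anticipate a serious obstacle here: the only point requiring mild care is making sure the constant in Corollary~\ref{C:Friedrichs} is genuinely independent of $k$ — which it is, since it depends only on the shape-regularity class $\grids$ determined by $\grid_0$ — and that the quantity $\enorm[k]{v}$ is meaningful for a $BV$ function that is only piecewise $H^1$ on $\gridk$, which is exactly what the trace remark following the definition of $\V_\infty$ secures. Everything else is a routine passage to the limit using the monotone convergence $\enorm[k]{v}\nearrow\enorm[\infty]{v}$ from Proposition~\ref{prop:V} and the $L^2$-convergence $v_k\to v$.
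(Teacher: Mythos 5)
Your main argument is exactly the paper's proof: choose the approximating sequence $\{v_k\}$ guaranteed by the definition of $\V_\infty$, apply the discrete Friedrichs inequality (Corollary~\ref{C:Friedrichs}) to each $v_k$, and pass to the limit using $\norm[\Omega]{v-v_k}\to 0$ and Proposition~\ref{prop:V}. In fact you are slightly more careful than the paper's one-line display, since you spell out why $\lim_{k\to\infty}\enorm[k]{v_k}=\enorm[\infty]{v}$ via the splitting $\enorm[k]{v_k}\le\enorm[k]{v-v_k}+\enorm[k]{v}$ together with the monotone convergence $\enorm[k]{v}\nearrow\enorm[\infty]{v}$.

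However, the ``more direct'' alternative you sketch at the end has a real gap. Corollary~\ref{C:Friedrichs} rests on Proposition~\ref{P:dist-dGcG}, and the estimate in Proposition~\ref{P:dist-dGcG} is stated \emph{only} for $v\in\VG$: although the operator $\mathcal{I}_\grid$ is defined on $H^1(\grid)$, the bound $\norm[\elm]{\nabla(v-\mathcal{I}_\grid v)}^2\Cleq\int_{\partial\elm}\hG^{-1}\jump{v}^2\ds$ uses inverse inequalities and equivalence of norms on the finite-dimensional polynomial space, and therefore does not extend to a general $v\in H^1(\gridk)$ (for such $v$ one would instead pick up a local $H^1$-oscillation term). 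A function $v\in\V_\infty$ is \emph{not} in $\V(\gridk)$ for any fixed $k$ (its restriction to $\Omega^-$ is only $H^1$, not piecewise polynomial on $\gridk$), so you cannot invoke Corollary~\ref{C:Friedrichs} on it directly. This is precisely why the paper, and your main argument, route through the discrete approximants $v_k\in\V_k$. Drop the alternative paragraph and the proposal is sound.
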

\begin{proof}
Since $v\in\V_\infty$, there exists $\{v_k\}_{k\in\N}, v_k\in\V_k$
  with $\limsup_{k\to\infty}\enorm[k]{v_k}<\infty$ and
  $\lim_{k\to\infty}\enorm[k]{v-v_k}+\norm[\Omega]{v-v_k}=0$. It thus
  follows from Corollary~\ref{C:Friedrichs} and
  Proposition~\ref{prop:V} that
  \begin{align*}
    \norm[\Omega]{v}=\lim_{k\to\infty}\norm[\Omega]{v_k}\Cleq\lim_{k\to\infty}\enorm[k]{v_k}=\enorm[\infty]{v}.
  \end{align*}
  This is the assertion.
\end{proof}

\begin{lem}[Poincar\'e-$\V_\infty$]\label{lem:poincare}
  Fix $k\in\N$ and let $\elm\in \gridk$. Then for $v\in\V_\infty$ and
  $v_\elm:=\frac1{|\omegak(\elm)|}\int_{\omegak(\elm)}v\dx$, we have 
  \begin{align*}
    \norm[\omegak(\elm)]{v-v_\elm}^2\Cleq \norm[\omegak(\elm)]{h_k\nablaG
    v}^2+\int_{\{S\in\sides^+:S\subset \omegak(\elm)\}}h_k^2h_+^{-1}\jump{v}^2\ds.
  \end{align*}
\end{lem}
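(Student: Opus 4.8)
The strategy is to deduce the $\V_\infty$ Poincaré inequality from the discrete Poincaré inequality of Proposition~\ref{prop:poincareG} by a limiting argument. First I would fix $k$ and $\elm\in\gridk$, take $v\in\V_\infty$, and let $\{v_m\}_{m\in\N}$ with $v_m\in\V_m$ be an approximating sequence as in the definition of $\V_\infty$, so that $\enorm[m]{v-v_m}+\norm[\Omega]{v-v_m}\to 0$ and $\limsup_m\enorm[m]{v_m}<\infty$. Since $\grid_m$ is a refinement of $\gridk$ for $m\ge k$, Proposition~\ref{prop:poincareG} applies with $\grid=\gridk$ and $\grid_\star=\grid_m$, giving
\begin{align*}
  \norm[\omegak(\elm)]{v_m-(v_m)_\elm}^2\Cleq \int_{\omegak(\elm)}h_k^2|\nablaG v_m|^2\dx+\int_{\{S\in\sides_m:\,S\subset\omegak(\elm)\}}h_k^2 h_m^{-1}\jump{v_m}^2\ds,
\end{align*}
where $(v_m)_\elm$ is the mean of $v_m$ over $\omegak(\elm)$ and the hidden constant is independent of $m$ (it depends only on $d$ and the shape regularity of $\neighk(\elm)$, which are fixed once $k$ is fixed).

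Next I would pass to the limit $m\to\infty$ term by term. On the left-hand side, $v_m\to v$ in $L^2(\Omega)$ implies $(v_m)_\elm\to v_\elm$ and hence $\norm[\omegak(\elm)]{v_m-(v_m)_\elm}\to\norm[\omegak(\elm)]{v-v_\elm}$. For the first term on the right, $\enorm[m]{v-v_m}\to 0$ gives $\nablaG v_m\to\nablaG v$ in $L^2(\Omega)^d$ (using the extended definition \eqref{df:nablapw}), so $\int_{\omegak(\elm)}h_k^2|\nablaG v_m|^2\dx\to\int_{\omegak(\elm)}h_k^2|\nablaG v|^2\dx$. The delicate term is the jump term: one has to show
\begin{align*}
  \int_{\{S\in\sides_m:\,S\subset\omegak(\elm)\}}h_m^{-1}\jump{v_m}^2\ds\ \longrightarrow\ \int_{\{S\in\sides^+:\,S\subset\omegak(\elm)\}}h_+^{-1}\jump{v}^2\ds,
\end{align*}
after which one multiplies through by $h_k^2$ (constant on each fixed element of $\gridk$, hence harmless). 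Here I would first replace $v_m$ by $v$ at the cost of $\enorm[m]{v-v_m}\to 0$, reducing matters to showing the localised monotone convergence $\int_{\{S\in\sides_m:S\subset\omegak(\elm)\}}h_m^{-1}\jump{v}^2\ds\to\int_{\{S\in\sides^+:S\subset\omegak(\elm)\}}h_+^{-1}\jump{v}^2\ds$, which is exactly (a finite sum of) the localised statement already established in the second part of Proposition~\ref{prop:V}. Care is needed since $\omegak(\elm)$ may intersect $\Omega^-$, so part of the jump mass may escape; but that is precisely the content of Proposition~\ref{prop:V}, whose proof shows $\int_{\sides_m\setminus\sides_m^+}h_m^{-1}\jump{v}^2\ds\to 0$ localised to any fixed element, and the surviving part converges monotonically to the $\sides^+$ integral.

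**Main obstacle.** The only real subtlety is bookkeeping on faces: matching the face sets $\{S\in\sides_m:S\subset\omegak(\elm)\}$ against $\{S\in\sides^+:S\subset\omegak(\elm)\}$ and handling the cross term $\jump{v}$ versus $\jump{v_m}$ uniformly in $m$. I would absorb the cross term using $\jump{v_m}^2\le 2\jump{v}^2+2\jump{v-v_m}^2$ together with $\norm[\Gamma_m]{h_m^{-1/2}\jump{v-v_m}}\le\enorm[m]{v-v_m}\to 0$, and absorb the face-set matching into the localised part of Proposition~\ref{prop:V}. Everything else is routine passage to the limit in finitely many terms with a fixed constant, so no separate estimate is required beyond what Propositions~\ref{prop:poincareG} and~\ref{prop:V} already provide.
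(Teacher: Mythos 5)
Your proof is correct and follows essentially the same route as the paper's: apply the discrete Poincaré inequality (Proposition~\ref{prop:poincareG}) to the approximating sequence $v_m\in\V_m$ furnished by the definition of $\V_\infty$, then pass to the limit term by term using $\norm[\Omega]{v-v_m}\to 0$ for the means, $\enorm[m]{v-v_m}\to 0$ for the gradient and cross-jump terms, and the localised monotone convergence from Proposition~\ref{prop:V} for the surviving jump integral. Your explicit handling of the face bookkeeping and the cross term via $\jump{v_m}^2\le 2\jump{v}^2+2\jump{v-v_m}^2$ is exactly the step the paper compresses into its citation of Proposition~\ref{prop:V}, so the two arguments coincide.
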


\begin{proof}
  By the definition of $\V_\infty$, there exists 
  $v_\ell\in\V_\ell$, $\ell\in\N_0$, with
  $\lim_{\ell\to\infty}\enorm[\ell]{v-v_\ell}+\norm[\Omega]{v-v_\ell}=0$ and
  $\limsup_{\ell\to\infty}\enorm[\ell]{v_\ell}<\infty$.  
  Therefore, we have 
  \begin{multline*}
    \norm[\omegak(\elm)]{\nablaG
    v_\ell}^2+\int_{\{S\in\sides_\ell:S\subset
    \omegak(\elm)\}}h_\ell^{-1}\jump{v_\ell}^2\ds
    \\
    \to \norm[\omegak(\elm)]{\nablaG
    v}^2+\int_{\{S\in\sides^+:S\subset \omegak(\elm)\}}h_+^{-1}\jump{v}^2\ds\quad\text{as}~\ell\to\infty;
  \end{multline*}
  see Proposition~\ref{prop:V}. Moreover, we have 
  \begin{align*}
    \norm[\omega_k(\elm)]{v_\elm-v_{\ell,\elm}}\le
    \norm[\omega_k(\elm)]{v-v_{\ell}}\le \norm[\Omega]{v-v_\ell}\to 0\quad\text{as}~\ell\to\infty,
  \end{align*}
  where
  $v_{\ell,\elm}:=\frac1{|\omegak(\elm)|}\int_{\omegak(\elm)}v_\ell\dx$. 
  We conclude with
  Proposition~\ref{prop:poincareG} that
  \begin{align*}
    \norm[\omegak(\elm)]{v-v_\elm}^2&\leftarrow
    \norm[\omegak(\elm)]{v_\ell-v_{\ell,\elm}}^2
    \\
    &\Cleq \norm[\omegak(\elm)]{h_k\nablaG
    v_\ell}^2+\int_{\{S\in\sides_\ell:S\subset
      \omegak(\elm)\}}h_k^2h_\ell^{-1}\jump{v_\ell}^2\ds
    \\
    &\to \norm[\omegak(\elm)]{h_k\nablaG
    v}^2+\int_{\{S\in\sides^+:S\subset \omegak(\elm)\}}h_k^2h_+^{-1}\jump{v}^2\ds,
  \end{align*}
  as $\ell\to\infty$.
\end{proof}

In order to extend the dG bilinear form~\eqref{dgbilinear} to $\V_\infty$, we need
  to define appropriate lifting operators.  For each $\side\in\sides^+$,
  there exists $\ell=\ell(\side)\in\N$, such that $\side\in\sides_\ell^{++}$
  .
  We define
  the local lifting operators
  $\riftS[\infty]:L^2(S)^d\to L^2(\Omega)^d$ and 
  $\liftS[\infty]:L^2(S)\to L^2(\Omega)^d$ by
  \begin{align}\label{eq:lift-inf}
    \riftS[\infty]=\riftS[\ell]:=\riftS[{\gridk[\ell]}]\qquad
                     \text{and}\qquad
                     \liftS[\infty]=\liftS[\ell]:=\liftS[{\gridk[\ell]}].
    \end{align}
    From~\eqref{eq:liftG} it is easy to see, that 
    $\riftS[\ell]$ and $\liftS[\ell]$ depend only on $S$ and the
    at most two adjacent elements $\elm,\elm'\in \gridk[\ell]^+$ with $\side\subset\elm\cap\elm'$.
    Therefore, and thanks to the fact that 
    the $\gridk^+$ are nested, we have that $\riftS[\ell]=\riftS[k]$
    for all $k\ge \ell$ and, thus,
    the definition is unique.  
    We formally define the global lifting operators 
    by
    \begin{align*}
      \riftG[\infty]:=\sum_{\side\in\sides^+}\riftS[\infty]\qquad\text{and}\qquad \liftG[\infty]:=\sum_{\side\in\mathring\sides^+}\liftS[\infty];
    \end{align*}
    here
    $\mathring\sides^+:=\{\side\in\sides^+:\side\not\in\partial\Omega\}$.

Moreover, from the local estimates 
  \eqref{eq:liftGstab}, it is easy to see that for $v\in\V_\infty$ and
  $\vec{\beta}\in \R^d$, we have that 
  $\sum_{\side\in\sides_k^+}\riftS[\infty](\jump{v})$ and
  $\sum_{\side\in\mathring\sides_k^+}\liftS[\infty](\vec{\beta}\cdot\jump{v})$
  are Cauchy sequences in $L^2(\Omega)^d$. 
  Consequently,
  $\riftG[\infty](\jump{v}),\liftG[\infty](\vec{\beta}\cdot\jump{v})\in
  L^2(\Omega)$ are well posed 
  and we have 
  \begin{align}\label{eq:lift-stab}
      \norm{\riftG[\infty](\jump{v})}\Cleq 
      \norm[\Gamma^+]{\hG[+]^{-1/2} v}
      \quad\text{and}\quad 
      \norm{\liftG[\infty](\vec{\beta}\cdot\jump{v})}\Cleq \abs{\vec{\beta}}
      \norm[\mathring\Gamma^+]{\hG[+]^{-1/2} v},
    \end{align}
    where
    $\Gamma^+=\bigcup\{\side:\side\in\sides^+\}$ and
    $\mathring\Gamma^+=\bigcup\{\side:\side\in\mathring\sides^{+}\}$. 
    This enables us to generalise the discontinuous Galerkin bilinear form   
    to $\V_\infty$ setting
    \begin{align*}
    \bilin[\infty]{w}{v}&:=\int_{\Omega}\nablaG w\cdot\nablaG v\,\ud x -\int_{\sides^+}\big(\mean{\nabla w}\cdot\jump{v}+\theta \mean{\nabla v}\cdot\jump{w}\big)\ds
       \\
      &\quad +\int_{\mathring\sides^+}
       \big(\vec{\beta}\cdot\jump{w}\jump{\nabla v}+\jump{\nabla
         w}\vec{\beta}\cdot\jump{v}\big)\ds
       \\
       &\quad + \int_\Omega \gamma \big(\riftG[\infty](\jump{w})+\liftG[\infty](\vec{\beta}\cdot\jump{w})\big)\cdot \big(\riftG[\infty](\jump{v})+\liftG[\infty](\vec{\beta}\cdot\jump{v})\big)\dx
       \\
                          &\quad+\int_{\sides^+}\frac{\sigma}{h_+}\jump{w}\cdot\jump{v}\,\ud s,
 \end{align*}
 for $v,w\in\V_\infty$.

\begin{lem}\label{lem:V}
  The space $\big(\V_\infty,\scp[\infty]{\cdot}{\cdot}\big)$ is a
  Hilbert space.
\end{lem}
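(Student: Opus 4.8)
We need two things: that $\scp[\infty]{\cdot}{\cdot}$ is an inner product on $\V_\infty$, and that $\V_\infty$ is complete with respect to the induced norm.

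Let me think about the inner product part first. $\scp[\infty]{v}{w} = \int_\Omega \nablaG v \cdot \nablaG w \, dx + \bar\sigma \int_{\sides^+} h_+^{-1} \jump{v}\jump{w}\,ds$. This is clearly bilinear and symmetric. Positive semi-definiteness is obvious. For definiteness: if $\enorm[\infty]{v}^2 = 0$, then $\nablaG v = 0$ a.e. and $\jump{v} = 0$ on all faces of $\sides^+$. By Friedrichs-$\V_\infty$ (Corollary \ref{C:FriedrichsVinfty}), $\norm[\Omega]{v} \lesssim \enorm[\infty]{v} = 0$, so $v = 0$. Good, that's the inner product part and it's short.

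Now completeness. This is the heart of the matter. Let $\{v^{(n)}\}_n$ be a Cauchy sequence in $\V_\infty$ w.r.t. $\enorm[\infty]{\cdot}$.

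First, the candidate limit. By Friedrichs-$\V_\infty$, $\{v^{(n)}\}$ is Cauchy in $L^2(\Omega)$, hence converges to some $v \in L^2(\Omega)$. Also $\{\nablaG v^{(n)}\}$ is Cauchy in $L^2(\Omega)^d$, converging to some $\vec{g} \in L^2(\Omega)^d$. On $\Omega^+$: each $v^{(n)}|_\elm \in \P_r$ for $\elm \in \grid^+$, and on a fixed element polynomials of degree $r$ form a finite-dimensional space where all norms are equivalent, so $v^{(n)}|_\elm \to v|_\elm$ in $\P_r$, in particular $v|_\elm \in \P_r$ and $\nabla v|_\elm = \vec{g}|_\elm$. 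On $\Omega^-$: we need $v|_{\Omega^-} \in H^1_{\partial\Omega\cap\partial\Omega^-}(\Omega^-)$. Since $v^{(n)}|_{\Omega^-}$ lies in this space (closed subspace of $H^1$) and $v^{(n)}|_{\Omega^-} \to v|_{\Omega^-}$ in $L^2$ with $\nabla v^{(n)}|_{\Omega^-} \to \vec{g}|_{\Omega^-}$ in $L^2$, the limit lies in $H^1(\Omega^-)$ with $\nabla v = \vec{g}$; membership in the closed subspace $H^1_{\partial\Omega\cap\partial\Omega^-}(\Omega^-)$ follows since it is closed in the $H^1$-norm. So $\nablaG v = \vec{g}$ consistently with \eqref{df:nablapw}.

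Next, $v \in BV(\Omega)$ and control of jumps. From Proposition \ref{P:|Dv|<dG}, $|D v^{(m)} - D v^{(n)}|(\Omega) = |D(v^{(m)}-v^{(n)})|(\Omega) \lesssim \enorm[k]{v^{(m)}-v^{(n)}} \le \enorm[\infty]{v^{(m)}-v^{(n)}}$ for any $k$ (using Proposition \ref{prop:V}), so together with $L^1$-convergence we get $v^{(n)} \to v$ in $BV(\Omega)$ and $v \in BV(\Omega)$; in particular $v$ has $L^1$-traces on every $\Gamma_k$ and on $\Gamma^+$. For the jump terms on $\sides^+$: fix a finite subset of faces $\{S \in \sides^+ : S \subset \elm\}$ with $\elm \in \gridk[\ell]$; on this finite collection the $n$-dependence of $\int h_+^{-1}\jump{v^{(n)}}^2\,ds$ is controlled by $\enorm[\infty]{\cdot}$, so $\jump{v^{(n)}} \to \jump{v}$ in $L^2(S, h_+^{-1}ds)$ on each such face, and a monotone/Fatou argument (using the monotone exhaustion of $\sides^+$ by faces inside elements of $\gridk[\ell]$, $\ell\to\infty$) gives $\int_{\sides^+} h_+^{-1}\jump{v}^2\,ds \le \liminf_n \int_{\sides^+} h_+^{-1}\jump{v^{(n)}}^2\,ds < \infty$ and in fact $\enorm[\infty]{v^{(n)} - v} \to 0$.

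Finally — and this is the step I expect to be the real obstacle — we must show $v \in \V_\infty$, i.e. exhibit a sequence $\{v_k\}_{k}$ with $v_k \in \V_k$, $\limsup_k \enorm[k]{v_k} < \infty$, and $\enorm[k]{v - v_k} + \norm[\Omega]{v - v_k} \to 0$. The natural idea is a diagonal argument: each $v^{(n)} \in \V_\infty$ comes with its own approximating sequence $\{v^{(n)}_k\}_k \subset \V_k$; since $v^{(n)} \to v$ in $\enorm[\infty]{\cdot}$ and, for each fixed $n$, $\enorm[k]{v^{(n)}_k - v^{(n)}} \to 0$ as $k \to \infty$ with $\limsup_k \enorm[k]{v^{(n)}_k} < \infty$, one selects $k_n \nearrow \infty$ so that $v_{k_n} := v^{(n)}_{k_n}$ satisfies $\enorm[k_n]{v_{k_n} - v^{(n)}} + \norm[\Omega]{v_{k_n} - v^{(n)}}$ small and the energy stays bounded, then uses Proposition \ref{prop:V} (namely $\enorm[k]{v - v^{(n)}} \le \enorm[\infty]{v-v^{(n)}}$) and the triangle inequality to conclude $\enorm[k_n]{v - v_{k_n}} \to 0$. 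One must fill the index gaps: for $k$ between $k_n$ and $k_{n+1}$ one can reuse $v^{(n)}_k$, checking the bounds remain uniform; this bookkeeping, and making sure $\limsup_k \enorm[k]{v_k}<\infty$ survives the diagonal selection, is the delicate part. I would carry it out carefully, invoking the monotonicity $\enorm[k]{w} \le \enorm[m]{w}$ for $m \ge k$ (Proposition \ref{prop:V}) to keep the energy estimates one-sided and clean.
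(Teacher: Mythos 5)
Your overall skeleton is right (inner product via Friedrichs, build a candidate limit $v$, verify each defining property of $\V_\infty$, then stitch a witness sequence), but the proposal has two genuine gaps at precisely the points where the paper's proof has to work hardest.

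\textbf{Membership in $H^1_{\partial\Omega\cap\partial\Omega^-}(\Omega^-)$.} You assert that this space is ``closed in the $H^1(\Omega^-)$-norm'' and conclude. But $H^1_{\partial\Omega\cap\partial\Omega^-}(\Omega^-)$ is defined as the \emph{image} of $H_0^1(\Omega)$ under restriction to $\Omega^-$, and the image of a bounded operator is not automatically closed. Since $\Omega^-$ can have a very irregular boundary (it accumulates all the refined regions), there is no a priori extension operator $H^1(\Omega^-)\to H_0^1(\Omega)$ one can invoke to argue closedness. This is exactly why the paper's proof explicitly \emph{constructs} an extension: it forms $v^\ell_{m_\ell}=\Pi_{m_\ell}v^\ell$, applies the conforming interpolant $\mathcal{I}_{m_\ell}$ from Proposition~\ref{P:dist-dGcG}, shows via Lemma~\ref{lem:Pik2} and Proposition~\ref{P:dist-dGcG} that $\{\mathcal{I}_{m_\ell}v^\ell_{m_\ell}\}$ is bounded in $H_0^1(\Omega)$, extracts a weak limit $\tilde v\in H_0^1(\Omega)$, and then verifies $v|_{\Omega^-}=\tilde v|_{\Omega^-}$ via the $L^2$-estimate \eqref{eq:L2v-Imvm}. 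Without producing such a $\tilde v$, the claim $v|_{\Omega^-}\in H^1_{\partial\Omega\cap\partial\Omega^-}(\Omega^-)$ is unsupported.

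\textbf{The $BV$ step.} You apply Proposition~\ref{P:|Dv|<dG} directly to $v^{(m)}-v^{(n)}\in\V_\infty$, but that proposition is stated for $H^1(\grid)$. A function in $\V_\infty$ is generally \emph{not} in $H^1(\gridk)$: on an element $\elm\in\gridk\setminus\gridk^+$, such a function can jump across faces of $\grid^+$ strictly interior to $\elm$, so $v|_\elm\notin H^1(\elm)$. The paper circumvents this by first passing to $\Pi_k v^\ell\in\V_k\subset H^1(\gridk)$ (hence Proposition~\ref{P:|Dv|<dG} applies), and then uses Lemma~\ref{lem:Pik2} to transfer the Cauchy property in $BV$ to $v^\ell$ itself.

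The other steps you sketch — identifying $v|_\elm\in\P_r$ on $\grid^+$, the monotone/Fatou control of the jump terms (the paper makes the tail estimate quantitative via Proposition~\ref{prop:V} and the inequality~\eqref{eq:bnd-restjump}), and the diagonal stitching of a witness sequence $\{v_k\}$ using nestedness $\V_m\subset\V_k$ for $k\ge m$ — are all sound in spirit and close to what the paper does, though the paper deliberately builds its witness from $\Pi_{m_\ell}v^\ell$ rather than from arbitrary approximating sequences, precisely because those interpolants are also what furnish the $H_0^1(\Omega)$ extension. In short: the missing idea is the quasi-interpolation operator $\Pi_k$ of Section~\ref{sec:quasi-ipol}, which the paper introduced \emph{specifically} to prove this lemma; without it both of the above gaps stay open.
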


\begin{cor}\label{cor:u8}
  There exists a unique $u_\infty\in\V_\infty$, such that 
  \begin{align}\label{eq:u8}
    \bilin[\infty]{u_\infty}{v}=\int_\Omega fv\dx\qquad\text{for
    all}~v\in\V_\infty.
  \end{align}
 
\end{cor}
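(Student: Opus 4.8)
The plan is to apply the Lax--Milgram lemma on the Hilbert space $\big(\V_\infty,\scp[\infty]{\cdot}{\cdot}\big)$ furnished by Lemma~\ref{lem:V}. Three facts are required: that $l(v):=\int_\Omega fv\dx$ is a bounded linear functional on $\big(\V_\infty,\enorm[\infty]{\cdot}\big)$, that $\bilin[\infty]{\cdot}{\cdot}$ is bounded, and that it is coercive. The first is immediate from Cauchy--Schwarz and the Friedrichs inequality of Corollary~\ref{C:FriedrichsVinfty}: $|l(v)|\le\norm{f}\,\norm[\Omega]{v}\Cleq\norm{f}\,\enorm[\infty]{v}$.

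For boundedness of $\bilin[\infty]{\cdot}{\cdot}$ I would first record the structural observation, a consequence of Lemma~\ref{L:Nk=NK}, that every face $\side\in\sides^+$ has both adjacent elements in $\grid^+$ (or lies on $\partial\Omega$): if some $\elm\in\grid^+$ had a face on $\partial\Omega^-$, the element on the far side would be refined infinitely often and thus could not lie in $\neighk(\elm)\subset\grid^+$ for large $k$. Hence on $\sides^+$ the quantities $\mean{\nabla w}$, $\jump{w}$ and $\jump{\nabla w}$ are traces of the (frozen) elementwise polynomials, so trace and inverse inequalities give $\norm[\Gamma^+]{\hG[+]^{1/2}\mean{\nabla w}}+\norm[\Gamma^+]{\hG[+]^{1/2}\jump{\nabla w}}\Cleq\norm[\Omega]{\nablaG w}$; combined with $\norm[\Gamma^+]{\hG[+]^{-1/2}\jump{w}}\le\enorm[\infty]{w}$ (using $\bar\sigma\ge1$) and the lifting stability~\eqref{eq:lift-stab}, a Cauchy--Schwarz estimate of the terms of $\bilin[\infty]{w}{v}$ yields $|\bilin[\infty]{w}{v}|\Cleq\enorm[\infty]{w}\,\enorm[\infty]{v}$. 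The same observation is what makes $\bilin[\infty]{\cdot}{\cdot}$ well defined at all, since otherwise $\mean{\nabla\cdot}$ on an interface face would be a trace of an $L^2$-only gradient.

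The substantive step is coercivity, obtained by passing to the limit in~\eqref{eq:coercive}. Given $v\in\V_\infty$, choose $v_k\in\V_k$ with $\enorm[k]{v-v_k}+\norm[\Omega]{v-v_k}\to0$ and $\limsup_k\enorm[k]{v_k}<\infty$; then $\bilin[k]{v_k}{v_k}\ge\alpha\enorm[k]{v_k}^2$ for the $k$-independent $\alpha$, and by Proposition~\ref{prop:V} together with the triangle inequality $\enorm[k]{v_k}\to\enorm[\infty]{v}$, so the right-hand side tends to $\alpha\enorm[\infty]{v}^2$. It remains to prove $\bilin[k]{v_k}{v_k}\to\bilin[\infty]{v}{v}$, and this is where the work lies. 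I would split the skeleton $\sides_k$ into the settled part $\sides_k^+\subset\sides^+$ and the remainder $\sides_k\setminus\sides_k^+$. Over the remainder $\norm[\sides_k\setminus\sides_k^+]{\hG[k]^{-1/2}\jump{v_k}}\to0$ — by the estimate underlying the proof of Proposition~\ref{prop:V} applied to $\jump{v}$, together with $\enorm[k]{v-v_k}\to0$ for the difference — while the competing mean/jump-gradient factors are bounded by $\norm[\Omega]{\nablaG v_k}$, which is uniformly bounded; hence, with the mesh-size decay of Lemma~\ref{lem:Omegastar}, all face and lifting contributions attached to $\sides_k\setminus\sides_k^+$ vanish in the limit, and the volume term converges because $\norm[\Omega]{\nablaG(v-v_k)}\to0$. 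Over the settled part, Lemma~\ref{L:Nk=NK} guarantees that every fixed $\side\in\sides^+$ eventually belongs to $\sides_k^+$ with its whole neighbourhood frozen, so there $v_k\to v$ in $L^2$ of that fixed finite-dimensional patch, which upgrades to convergence of $\mean{\nabla v_k}$, $\jump{v_k}$, $\jump{\nabla v_k}$ and of the local liftings in $L^2(\side)$; dominated convergence over $\sides^+$ (with the uniform bound as majorant) then sends the settled contributions to the matching terms of $\bilin[\infty]{v}{v}$. Collecting these limits gives $\bilin[\infty]{v}{v}\ge\alpha\enorm[\infty]{v}^2$, and Lax--Milgram produces the unique $u_\infty\in\V_\infty$ solving~\eqref{eq:u8}. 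The main obstacle is exactly this term-by-term passage to the limit in $\bilin[k]{v_k}{v_k}$: controlling the nonsymmetric, convective and lifting contributions simultaneously, and showing that the jump energy carried by the shrinking region $\Omega_k^-$ genuinely disappears, for which the uniform bound on $\enorm[k]{v_k}$ and the mesh-size decay of Lemma~\ref{lem:Omegastar} are the decisive inputs.
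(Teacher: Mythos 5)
Your overall plan coincides with the paper's: Lemma~\ref{lem:V} supplies the Hilbert space, continuity of $\bilin[\infty]{\cdot}{\cdot}$ follows from the trace/inverse estimates and~\eqref{eq:lift-stab}, and the result is then Lax--Milgram. Where you deviate is the coercivity proof, and here the paper takes a markedly slicker route. You approximate $v\in\V_\infty$ by an arbitrary sequence $v_k\in\V_k$ from the definition of $\V_\infty$, apply the discrete coercivity~\eqref{eq:coercive}, and then pass $\bilin[k]{v_k}{v_k}\to\bilin[\infty]{v}{v}$ term by term, splitting $\sides_k$ into the settled part and its complement. The paper instead substitutes the \emph{designed} approximant $\Pi_k v$ and exploits that $\Pi_k v$ is continuous in $\Omega\setminus\Omega_k^+$ with $\jump{\Pi_k v}=0$ on $\partial(\Omega\setminus\Omega_k^+)$ (Lemma~\ref{lem:Pik}\eqref{Pik:3}, \eqref{Pik:5a}), which collapses both bilinear forms to the same integrals over the frozen part of the skeleton and yields the \emph{exact} identity $\bilin[k]{\Pi_kv}{\Pi_kv}=\bilin[\infty]{\Pi_kv}{\Pi_kv}$, together with $\enorm[k]{\Pi_kv}=\enorm[\infty]{\Pi_kv}$. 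Then Lemma~\ref{lem:Pik2} and continuity of $\bilin[\infty]{\cdot}{\cdot}$ finish the limit passage with no face-by-face bookkeeping. This buys a three-line coercivity proof; your route is more elementary in that it uses only the definition of $\V_\infty$, but pays for it with a delicate double limit over a skeleton that changes with $k$.

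There is a soft spot in your settled-part argument that should be made explicit before it can stand: invoking ``dominated convergence with the uniform bound as majorant'' is not a correct application of dominated convergence, since the uniform bound $\limsup_k\enorm[k]{v_k}<\infty$ does not provide a single integrable majorant independent of $k$. What actually closes the argument is a diagonal splitting: pick a finite $\sides'\subset\sides^+$ so that $\bar\sigma\int_{\sides^+\setminus\sides'}\hG[+]^{-1}\jump{v}^2\ds<\epsilon$, and for the tail bound
\begin{align*}
\int_{\sides_k^+\setminus\sides'}\hG[k]^{-1}\jump{v_k}^2\ds
\le 2\int_{\sides^+\setminus\sides'}\hG[+]^{-1}\jump{v}^2\ds
+2\int_{\sides_k}\hG[k]^{-1}\jump{v_k-v}^2\ds,
\end{align*}
which is $\lesssim\epsilon+\enorm[k]{v-v_k}^2$; the first term controls the tail uniformly, the second vanishes. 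A companion estimate handles the non-penalty face and lifting contributions, using that $\hG[k]=\hG[+]$ on $\sides_k^+$. With this substitution your argument goes through, but as written it is a genuine gap in rigor rather than a mere stylistic difference. Your structural remark that each $\side\in\sides^+$ eventually has both adjacent elements in $\grid^+$ (so $\mean{\nabla\cdot}$ and $\jump{\nabla\cdot}$ are polynomial traces) is correct and worth stating; the paper uses this implicitly.
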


In order to prove the last two statements, we introduce
a new quasi-interpolation, which is
designed  in due consideration of the future 
refinements. The proofs of Lemma~\ref{lem:V} and
Corollary~\ref{cor:u8} are postponed to the end of
Section~\ref{sec:quasi-ipol}.

\subsection{Quasi-interpolation}
\label{sec:quasi-ipol}

We shall now define a quasi-interpolation
operator $\Pi_k$, which maps into $\V_\infty\cap\V_k$; this will be a
key technical tool in the analysis. On the one 
hand, membership in $\V_\infty\cap\V_k$ suggests to use some Cl\'ement
type interpolation since the mapped
functions need to be continuous in $\Omega^-$.
On the other hand, the fact that the
\ADGM may leave some elements (namely $\gridk^+\supset\gridk^{++}$) unrefined, suggests
to define $\Pi_k$ to be the identity on these elements. Note that the
quasi-interpolation operator
from~\cite{CarstensenGallistlSchedensack:2013} 
is motivated by a similar idea in order to map from one Crouzeix-Raviart
space into its intersection with a finer one.

For fixed $k\in\N$, let 
$\{\Phi_z^\elm:\elm\in\gridk,~z\in \nodes_k(\elm)\}$
be the Lagrange basis of $\Vk:=\VG[\gridk]$, i.e., $\Phi_z^\elm$ is a piecewise polynomial
of degree $r$ with 
$\supp(\Phi_z^\elm) =\elm$ and 
\begin{align*}
  \Phi_z^\elm(y)=\delta_{zy}\qquad\text{for
  all}~z,y\in\nodes_k.
\end{align*}
Its dual basis is then the set $\{\Psi_z^\elm:\elm\in\gridk,~z\in
\nodes_k(\elm)\}$ of piecewise polynomials
of degree $r$, such that 
$\supp(\Psi_z^\elm) =\elm$ and
\begin{align*}
  \scp{\Psi_y^\elm}{\Phi_z^\elm}=\delta_{zy}\qquad\text{for
  all}~z,y\in\nodes_k(\elm).
\end{align*}
For all $\ell\ge k$, we define $\Pi_k:L^1(\Omega)\to L^1(\Omega)$ by 
\begin{align}\label{df:Pik}
  \Pi_kv:=\sum_{\elm\in\gridk}\sum_{z\in\nodes_k(\elm)}
  (\Pi_kv)|_\elm(z) \,\Phi_z^\elm,
\end{align}
where for $z\in\nodes_k(\elm)$ we have that 
\begin{align}\label{df:Pikz}
  (\Pi_kv)|_\elm(z):=
  \begin{cases}
    \int_\elm
    v\Psi_z^\elm\dx,\quad&\text{if}~
    \neighk(z)\cap\gridk^{++}\neq \emptyset
    \\
    0,\quad&\text{else if}~z\in\partial\Omega
    \\
    \sum_{\elm'\in\neighk(z)}\frac{|\elm'|}{|\omegak(z)|}\int_{\elm'}v\Psi_z^{\elm'}\dx,\quad&\text{else.}
  \end{cases}
\end{align}
Beyond standard stability and interpolation estimates for
$H_0^1(\Omega)$ functions~\cite{ScottZhang:90,DemlowGeorgoulis:12}, we
list the following properties related to our setting.

\begin{lem}[Properties of $\Pi_k$]\label{lem:Pik}
  The operator $\Pi_k:L^1(\Omega)\to L^1(\Omega)$ defined
  in~\eqref{df:Pik} has the following properties:
  \begin{enumerate}
  \item\label{Pik:1} $\Pi_k:L^p(\Omega)\to L^p(\Omega)$ is a linear and
    bounded projection for all $1\le p\le\infty$. In particular, we have that 
    \begin{align*}
      \norm[L^p(\elm)]{\Pi_kv}\Cleq \norm[L^p(\omegak(\elm))]{v},
    \end{align*}
    where the constant solely depends on $p$, $r$, $d$, and  the shape regularity
    of $\grid_0$. 
  \item\label{Pik:2} $\Pi_kv\in\Vk$ for all $v\in L^1(\Omega)$;
     \item\label{Pik:5} $\Pi_kv|_\elm=v|_\elm$, if $\elm\in\gridk$ and
    $v|_{\omegak(\elm)}\in\P_r(\omega_k(\elm))$;
  \item\label{Pik:4} $\Pi_kv|_\elm=v|_\elm$, if $\elm\in\gridk^{++}$ and
    $v|_\elm\in\P_r(\elm)$; if moreover $v\in\Vk$, then also
    $\jump{v-\Pi_kv}|_S\equiv0$ for all $\side\in\sides_k^{++}$. 
  \item\label{Pik:3} $\Pi_kv|_{\Omega\setminus\Omega_k^{+}}\in
    C(\overline{\Omega\setminus\Omega_k^{+}})$ and $\jump{\Pi_kv
      }=0$ on $\partial(\Omega\setminus\Omega^+_k)$;
  \item \label{Pik:4a} $\Pi_k v=v$, for all $v\in\V_k$ with $v|_{\Omega\setminus\Omega_k^{++}}\in
    C(\Omega\setminus\Omega_k^{++})$;
  \item \label{Pik:5a} $\Pi_kv\in \V_\infty$, and we have $\enorm[k]{\Pi_kv}=\enorm[\infty]{\Pi_kv}$.
  \end{enumerate}
\end{lem}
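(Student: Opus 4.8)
The plan is to verify the seven properties essentially in the order listed, since each leans on the previous ones and on the structure of the three-case definition \eqref{df:Pikz}. For \ref{Pik:1}, I would argue locally: on each $\elm\in\gridk$ the value $(\Pi_kv)|_\elm(z)$ is, in each of the three branches, a convex-type average of the quantities $\int_{\elm'}v\Psi_z^{\elm'}\dx$ over $\elm'\in\neighk(z)$ (the middle branch contributing $0$). Using the boundedness of the dual basis functions $\Psi_z^{\elm'}$ in $L^{p'}(\elm')$ (from the reference element and scaling), Hölder's inequality, and the finite overlap of the patches $\omegak(z)$, one gets $\norm[L^p(\elm)]{\Pi_kv}\Cleq\norm[L^p(\omegak(\elm))]{v}$ with a constant depending only on $p,r,d$ and the shape regularity of $\grid_0$. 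The projection property follows because if $v\in\Vk$ then $\int_{\elm'}v\Psi_z^{\elm'}\dx=v|_{\elm'}(z)$ by the dual-basis relation, and in all three branches the prescribed nodal values then reproduce $v$ (in the third branch because the weights $|\elm'|/|\omegak(z)|$ sum to one and a continuous-in-$\omegak(z)$ function has a single well-defined value at $z$ — but here one must be a little careful, since $v\in\Vk$ need not be continuous at $z$; the averaging is precisely what makes $\Pi_k$ idempotent on its own range, and I would check that $\Pi_k(\Pi_kv)=\Pi_kv$ directly). Property \ref{Pik:2} is immediate from \eqref{df:Pik}, since $\Pi_kv$ is by construction a linear combination of the Lagrange basis functions of $\Vk$.

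Properties \ref{Pik:5} and \ref{Pik:4} are local reproduction statements. If $v|_{\omegak(\elm)}\in\P_r$, then for every node $z\in\nodes_k(\elm)$ and every $\elm'\in\neighk(z)$ we have $\int_{\elm'}v\Psi_z^{\elm'}\dx=v|_{\elm'}(z)=v|_\elm(z)$ (the last equality because $v$ is a single polynomial on the whole patch), so each of the three branches returns $v|_\elm(z)$; hence $\Pi_kv|_\elm=v|_\elm$. For \ref{Pik:4}, if $\elm\in\gridk^{++}$ then every $z\in\nodes_k(\elm)$ satisfies $\neighk(z)\subset\gridk^{++}\subset\gridk^+$, so the first branch of \eqref{df:Pikz} applies and $(\Pi_kv)|_\elm(z)=\int_\elm v\Psi_z^\elm\dx=v|_\elm(z)$ whenever $v|_\elm\in\P_r(\elm)$, giving $\Pi_kv|_\elm=v|_\elm$. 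For the jump statement: if additionally $v\in\Vk$ and $\side\in\sides_k^{++}$ with $\side\subset\elm\cap\elm''$, then both $\elm,\elm''\in\gridk^{++}$, so $\Pi_kv=v$ on both, hence $\jump{\Pi_kv-v}|_\side=\jump{v-v}|_\side\equiv0$. Property \ref{Pik:3} uses the first two branches of \eqref{df:Pikz}: on $\Omega\setminus\Omega_k^+$ every node $z$ has $\neighk(z)\cap\gridk^{++}=\emptyset$, so either $z\in\partial\Omega$ and the value is $0$, or the third branch assigns the single value $\sum_{\elm'\in\neighk(z)}\frac{|\elm'|}{|\omegak(z)|}\int_{\elm'}v\Psi_z^{\elm'}\dx$, which is independent of the element $\elm\ni z$ used in \eqref{df:Pik}; thus the nodal values agree across all interior faces of $\gridk$ lying in $\Omega\setminus\Omega_k^+$, giving continuity there, and they vanish on $\partial\Omega$ and — by the same single-valued-ness — on $\partial(\Omega\setminus\Omega_k^+)$.

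For \ref{Pik:4a}, combine \ref{Pik:4} on $\Omega_k^{++}$ with \ref{Pik:3}: if $v\in\Vk$ with $v|_{\Omega\setminus\Omega_k^{++}}$ continuous, then on elements of $\gridk^{++}$ we already have $\Pi_kv=v$; on the remaining elements the third (averaging) branch of \eqref{df:Pikz} applied to the continuous function $v$ returns its genuine nodal value $v(z)$, and one checks the first branch (which can still occur at nodes $z$ with $\neighk(z)\cap\gridk^{++}\neq\emptyset$ but $z\notin\Omega_k^{++}$) also returns $v(z)$ there because $v$ is continuous and $\Psi_z^\elm$ integrates a polynomial exactly — so $\Pi_kv=v$ everywhere. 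Finally \ref{Pik:5a}: by \ref{Pik:3}, $\Pi_kv$ is continuous across every face not in $\sides_k^+$, so $\jump{\Pi_kv}\equiv0$ on $\sides_k\setminus\sides_k^+$, which immediately gives $\enorm[k]{\Pi_kv}=\enorm[\infty]{\Pi_kv}$ from the definitions of the two norms (the volume terms coincide and the only surviving jump contributions are over $\sides_k^+$, where $h_k=h_+$ since those faces are never refined). Membership $\Pi_kv\in\V_\infty$ then follows by exhibiting the constant sequence $v_j:=\Pi_kv\in\V_j$ for $j\ge k$ (each lies in $\V_j$ because $\Pi_kv\in\Vk\subset\V_j$), which has $\enorm[j]{v_j}$ bounded and $\enorm[j]{\Pi_kv-v_j}+\norm[\Omega]{\Pi_kv-v_j}=0$; one also needs $\Pi_kv|_{\Omega^-}\in H^1_{\partial\Omega\cap\partial\Omega^-}(\Omega^-)$, which is exactly what \ref{Pik:3} provides since $\Omega^-\subset\Omega\setminus\Omega_k^+$ up to a null set and $\Pi_kv$ is continuous there with vanishing trace on $\partial\Omega$. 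I expect the main obstacle to be the bookkeeping in \ref{Pik:1} — proving idempotency cleanly given that the third branch averages over a patch on which a $\Vk$-function may be discontinuous — and making the transition in \ref{Pik:5a} from "$\Omega_k^+$" to "$\Omega^-$" rigorous up to sets of measure zero, using that $\sides_k\setminus\sides_k^+$ consists of faces contained in $\overline{\Omega_k^-}$.
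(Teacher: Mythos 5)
Your overall approach matches the paper's: verify each claim directly from the branched definition~\eqref{df:Pikz}, relying on the Scott--Zhang structure for~\eqref{Pik:1}--\eqref{Pik:5} and on the case analysis of which branch fires for the remaining items. The paper's own proof is terse (it simply cites Scott--Zhang for~\eqref{Pik:1}--\eqref{Pik:5} and gives one-line arguments for the rest), so the extra detail you supply is welcome, and your observation that ``projection'' here means idempotency on the range of $\Pi_k$ rather than the identity on all of $\Vk$ is a genuine subtlety that the paper glosses over. Your resolution sketch is correct: the third branch assigns to a node $z$ a value that is independent of which element $\elm\ni z$ one looks at, so averaging it again reproduces it.

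There is, however, a concrete gap in your treatment of~\eqref{Pik:4}. You assert that if $\elm\in\gridk^{++}$ then every $z\in\nodes_k(\elm)$ has $\neighk(z)\subset\gridk^{++}$; this is false in general. From $\elm\in\gridk^{++}$ one only gets $\neighk(z)\subset\neighk(\elm)\subset\gridk^{+}$, not $\gridk^{++}$. What you actually need is the weaker statement $\neighk(z)\cap\gridk^{++}\neq\emptyset$, which holds because $\elm$ itself is a member, and this already triggers the first branch. More seriously, for the jump part you conclude $\jump{v-\Pi_kv}|_S\equiv0$ on $\side\in\sides_k^{++}$ by claiming both elements $\elm,\elm''$ sharing $\side$ lie in $\gridk^{++}$. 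This fails for faces of $\gridk^{++}$ that sit on $\partial(\Omega_k^{++})$: there $\elm''\notin\gridk^{++}$, and your argument gives no information about $\Pi_kv|_{\elm''}$. The correct argument (and the one the paper uses, repeated explicitly in the proof of Lemma~\ref{lem:PikStab}) is node-based: for any node $z\in\nodes_k\cap\side$ with $\side\in\sides_k^{++}$, the neighbourhood $\neighk(z)$ contains an element of $\gridk^{++}$, so the \emph{first} branch applies to \emph{every} $\elm'\in\neighk(z)$, giving $(\Pi_kv)|_{\elm'}(z)=\int_{\elm'}v\Psi_z^{\elm'}\dx=v|_{\elm'}(z)$ since $v|_{\elm'}\in\P_r(\elm')$. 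Hence $\Pi_kv$ and $v$ agree at all Lagrange nodes on $\side$ from both sides, their traces coincide, and the jump vanishes — without requiring $\elm''\in\gridk^{++}$. The remaining items \eqref{Pik:3}, \eqref{Pik:4a}, \eqref{Pik:5a} you handle correctly and in the same spirit as the paper.
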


\begin{proof}
  Claims~\eqref{Pik:1}--\eqref{Pik:5} follow by standard estimates for
  the Scott-Zhang operator~\cite{ScottZhang:90,DemlowGeorgoulis:12}. 

  Assertion~\eqref{Pik:4} is a consequence of the
  definition~\eqref{df:Pikz} of $\Pi_k$ since $\elm\in\gridk^{++}$
  implies that $\neighk(\elm)\cap\gridk^{++}=\neighk(\elm)$.
  Note that $v\in\VG$ implies $v|_\elm\in\P_r(\elm)$ for all
  $\elm\in\gridk$ and thus $(\Pi_k v)|_\elm(z)=v|_\elm(z)$ for all
  $\elm\in\neighk(z)$ if
  $\neighk(z)\cap\gridk^{++}\neq\emptyset$. This is in particular the
  case when $z\in \side\cap\nodes_k$ with $\side\in\sides_k^{++}$.

  For $\elm\in\gridk\setminus\gridk^+$, we have that
  $\neighk(z)\cap\gridk^{++}=\emptyset$ since otherwise there exists
  $\elm'\in \neighk(\elm)\cap\gridk^{++}$ and thus $\elm\in
  \neighk(\elm')$, which implies $\elm\in\gridk^+$, thanks to the
  definition of $\gridk^{++}$. Therefore, \eqref{df:Pikz} implies    
  that $\Pi_kv$ is continuous on 
  $\Omega\setminus\Omega_k^+$. Moreover, for $z\in
    \nodes_k(\elm)\cap\Omega\setminus\Omega^+_k$, definition~\eqref{df:Pikz} is
    independent of $\elm$ and thus $\Pi_kv$ does not jump across the
    boundary $\Omega\setminus\Omega_k^+$.  
    This completes the proof of~\eqref{Pik:3}.

  On the one hand, if $v\in\V_k$ with $v|_{\Omega\setminus\Omega_k^+}\in
  C(\overline{\Omega\setminus\Omega_k^{++}})$ then we have clearly
  $\Pi_kv|_{\Omega\setminus\Omega_k^+}=v|_{\Omega\setminus\Omega_k^+}$.
  On the other hand, we can
  conclude $\Pi_kv|_{\Omega_k^{++}}=v|_{\Omega_k^{++}}$
  from~\eqref{Pik:4}. This yields~\eqref{Pik:4a}.

  The claim \eqref{Pik:5a} is
  an immediate consequence of \eqref{Pik:3}.
\end{proof}

\begin{lem}[Stability]\label{lem:PikStab}
    Let $v\in\Vk[\ell]$ for some $ k\le
    \ell\in\N_0\cup\{\infty\}$. Then for all $\elm\in\gridk$, we have 
    \begin{multline*}
    \int_\elm\abs{\nabla\Pi_kv}^2\dx+\int_{\partial\elm}h_k^{-1}\jump{\Pi_kv}^2\ds
    \\\Cleq
    \int_{\omegak(\elm)}\abs{\nablaG v}^2\dx+\sum_{\elm'\in\gridk[\ell], \elm'\subset
      \omegak(\elm)}
    \int_{\partial\elm'}h_\ell^{-1}\jump{v}^2\ds,
  \end{multline*}
 setting $\grid_\ell:=\grid^+$ and $h_\ell:=h_+$, when $\ell=\infty$.
 In particular, we  have 
    $\enorm[k]{\Pi_k v} \Cleq
    \enorm[\ell]{v}$.
  %
\end{lem}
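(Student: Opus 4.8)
The plan is to prove the asserted bound \emph{elementwise} — for each fixed $\elm\in\gridk$ — and then obtain the global estimate $\enorm[k]{\Pi_kv}\Cleq\enorm[\ell]{v}$ by summation, using the finite overlap of the patches $\{\omegak(\elm)\}_{\elm\in\gridk}$ (each point of $\Omega$ lies in boundedly many of them, by the uniform shape regularity of $\gridk\in\grids$) and the fact that each face of $\grid_\ell$ lies in at most two of the sets $\partial\elm'$; throughout I set $\grid_\ell:=\grid^+$, $h_\ell:=h_+$ when $\ell=\infty$, so that $v$ is piecewise polynomial over $\grid_\ell$ and $h_\ell\le h_k\approx h_\elm$ on $\omegak(\elm)$. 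Since $\bar\sigma\ge 1$, summing the local bounds then gives $\enorm[k]{\Pi_kv}^2\Cleq\norm{\nablaG v}^2+\int_{\sides_\ell}h_\ell^{-1}\jump{v}^2\ds\Cleq\enorm[\ell]{v}^2$.

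First I would dispose of the easy case $\elm\in\gridk^{++}$. Then $\elm$ and all its neighbours lie in $\grid^+$, hence are never refined; in particular $\elm\in\grid_\ell$, $v|_\elm\in\P_r(\elm)$, the faces of $\elm$ are common to $\gridk$ and $\grid_\ell$, and $h_k=h_\ell$ on $\partial\elm$. Arguing as in the proof of Lemma~\ref{lem:Pik}\,\eqref{Pik:4} — its hypothesis being met \emph{locally}, since $v$ agrees on $\omegak(\elm)$ with a function of $\Vk$ — one gets $\Pi_kv|_\elm=v|_\elm$ and $\jump{\Pi_kv}|_S=\jump{v}|_S$ for every $S\subset\partial\elm$. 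Hence the local left-hand side equals $\int_\elm|\nabla v|^2\dx+\int_{\partial\elm}h_\ell^{-1}\jump{v}^2\ds$, which is dominated by the claimed right-hand side because $\elm\in\grid_\ell$ and $\elm\subset\omegak(\elm)$.

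The substantial case is $\elm\in\gridk^-=\gridk\setminus\gridk^{++}$, where I would use quasi-interpolation stability. The key elementary observation is that the trace of $\Pi_kv$ on a face $S$ depends only on the nodal values $(\Pi_kv)|_{\cdot}(z)$, $z\in S\cap\nodes_k$, each of which, by~\eqref{df:Pikz}, involves $v$ only on $\omegak(z)\subset\omegak(\elm)$; combined with the scaled trace inequality on $\P_r$ this keeps every contribution within $\omegak(\elm)$ and produces the correct weight $h_\ell^{-1}$ rather than $h_k^{-1}$. If $\elm$ carries no boundary node to which~\eqref{df:Pikz} assigns the value $0$, then $\Pi_k$ reproduces constants on $\elm$, so with $\bar c:=|\omegak(\elm)|^{-1}\int_{\omegak(\elm)}v\dx$ one has $\nabla\Pi_kv|_\elm=\nabla\Pi_k(v-\bar c)|_\elm$ and $\jump{\Pi_kv}|_S=\jump{\Pi_k(v-\bar c)}|_S$ on interior faces. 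An inverse estimate on $\P_r(\elm)$, the discrete trace inequality, and the local $L^2$-stability of Lemma~\ref{lem:Pik}\,\eqref{Pik:1} then reduce the local left-hand side to $h_\elm^{-2}\norm[\omegak(\elm)]{v-\bar c}^2$, which Proposition~\ref{prop:poincareG} (for $\ell<\infty$), respectively Lemma~\ref{lem:poincare} (for $\ell=\infty$), bounds by $\int_{\omegak(\elm)}|\nablaG v|^2\dx+\sum_{\elm'\in\grid_\ell,\,\elm'\subset\omegak(\elm)}\int_{\partial\elm'}h_\ell^{-1}\jump{v}^2\ds$, as required.

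The hard part will be the boundary elements $\elm\in\gridk^-$ touching $\partial\Omega$: there~\eqref{df:Pikz} forces some nodal values to $0$, so $\Pi_k$ no longer reproduces constants and no constant can be subtracted from the jump across a face $S\subset\partial\Omega$ (where $\jump{\Pi_kv}|_S=(\Pi_kv)|_\elm\,\normal$). My plan here is to argue directly — by an inverse estimate and Lemma~\ref{lem:Pik}\,\eqref{Pik:1}, and with the trace-and-locality remark above still in force — bounding the local left-hand side by $h_\elm^{-2}\norm[\omegak(\elm)]{v}^2$, and then to absorb this using that the right-hand side already contains the boundary jump contributions $\int_{\partial\elm'\cap\partial\Omega}h_\ell^{-1}|v|^2\ds$: a broken Friedrichs inequality on the patch $\omegak(\elm)$ — the natural companion of Proposition~\ref{prop:poincareG}, cf.\ also Corollary~\ref{C:Friedrichs}, Corollary~\ref{C:FriedrichsVinfty} and \cite{BuffaOrtner:09} — gives $\norm[\omegak(\elm)]{v}^2\Cleq h_\elm^2\bigl(\int_{\omegak(\elm)}|\nablaG v|^2\dx+\sum_{\elm'\subset\omegak(\elm)}\int_{\partial\elm'}h_\ell^{-1}\jump{v}^2\ds\bigr)$, which closes the argument. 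Securing this localised Friedrichs inequality uniformly in $k$ (and for $v\in\V_\infty$ when $\ell=\infty$), together with the bookkeeping that keeps the face terms local and correctly weighted by $h_\ell^{-1}$, is the main technical obstacle.
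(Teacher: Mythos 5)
Your treatment of the case $\elm\in\gridk^{++}$ and of elements on which $\Pi_k$ reproduces constants tracks the paper's proof: Lemma~\ref{lem:Pik}\eqref{Pik:4} for the former, and constant subtraction combined with inverse estimates, the local $L^2$-stability \eqref{Pik:1}, and Proposition~\ref{prop:poincareG} (respectively Lemma~\ref{lem:poincare} when $\ell=\infty$) for the latter; the globalisation by finite overlap of the patches is likewise the paper's.

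Where you genuinely part ways is at boundary elements, and this is worth spelling out. The paper treats \emph{every} $\elm\in\gridk$ by writing, in effect, $\nabla\Pi_kv|_\elm=\nabla\Pi_k(v-v_\elm)|_\elm$, justified by invoking Lemma~\ref{lem:Pik}\eqref{Pik:5} for the constant $v_\elm$. But at nodes $z\in\partial\Omega$ with $\neighk(z)\cap\gridk^{++}=\emptyset$, definition~\eqref{df:Pikz} sets $(\Pi_kv)|_\elm(z)=0$, so $\Pi_kv_\elm|_\elm$ need not be constant and the first inequality of the paper's chain~\eqref{eq:localstab} is not justified there — precisely the subtlety you isolate. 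Your remedy is the correct one: bound $\int_\elm|\nabla\Pi_kv|^2\dx$ crudely by $h_\elm^{-2}\norm[\omegak(\elm)]{v}^2$ (inverse estimate plus \eqref{Pik:1} and the trace-and-locality observation), then absorb by a \emph{patch-local} broken Friedrichs inequality. The required ingredient is indeed available, because the right-hand side of the lemma already contains $\int_{\partial\elm'\cap\partial\Omega}h_\ell^{-1}|v|^2\ds$ (recall $\jump{v}|_S=v\,\normal$ on boundary faces); such a localised Friedrichs estimate follows uniformly in $k$ from Proposition~\ref{prop:poincareG} combined with a scaled trace bound on the boundary face of $\omegak(\elm)$ and the shape regularity of $\grids$, and for $\ell=\infty$ one repeats the limiting argument of Lemma~\ref{lem:poincare}. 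In short, your route is essentially the paper's decomposition plus one additional, genuinely needed case split near $\partial\Omega$; it is the more careful of the two, since the paper's uniform appeal to~\eqref{Pik:5} for the constant $v_\elm$ does not hold at boundary elements whose boundary nodes are assigned the value zero.
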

\begin{proof}
  We begin by noting that, summing over all elements in $\gridk$ and
  accounting for the finite overlap of the domains $\omegak(\elm)$,
  $\elm\in\gridk$,  the global stability estimate is an immediate
  consequence of the corresponding local one.

  We first assume $\ell<\infty$. Let
  $\elm\in\gridk^{++}\subset\gridk[\ell]^{++}$. Then, thanks to
  Lemma~\ref{lem:Pik}\eqref{Pik:4}, we have  
  $\Pi_kv|_\elm=v|_\elm$. Moreover, let $\elm'\in\gridk$ such that
  $\elm\cap\elm'\in\sides_k$; then $\neighk(z)\ni\elm\in \gridk^{++}$
  and thus $(\Pi_kv)|_{\elm'}(z)=v|_{\elm'}(z)$, for all
  $z\in\nodes_k(\elm)\cap\nodes_k(\elm')$.  Consequently, we have
  $\jump{\Pi_kv}=\jump{v}$
  on $\partial\elm$, in other words
  \begin{align}
    \int_\elm\abs{\nabla\Pi_kv}^2\dx+\int_{\partial\elm}h_k^{-1}\jump{\Pi_kv}^2\ds 
    = \int_\elm\abs{\nabla v}^2\dx+\int_{\partial\elm}h_k^{-1}\jump{v}^2\ds .
  \end{align}

  Let now $\elm\in\grid_k$ be arbitrary. Then, an inverse estimate and the local stability
  (Lemma~\ref{lem:Pik} \eqref{Pik:1} and~\eqref{Pik:5}) for
  $v_\elm:=\frac1{|\omegak(\elm)|}\int_{\omegak(\elm)} v\dx\in\R$, imply 
  \begin{align}\label{eq:localstab}
    \begin{aligned}
      \int_\elm\abs{\nabla\Pi_kv}^2\dx&\Cleq
      \int_\elm h_k^{-2}\abs{\Pi_k(v-v_\elm)}^2\dx\Cleq
      \int_{\omegak(\elm)}h_k^{-2}\abs{v-v_\elm}^2\dx
      \\
      &\Cleq
      \sum_{\elm'\subset
        \omegak(\elm),\elm'\in\gridk[\ell]}\int_{\elm'}\abs{\nabla v}^2\dx+
      \int_{\partial\elm'}h_{\ell}^{-1}\jump{v}^2\ds;
    \end{aligned}
  \end{align}
  here the last estimate follows from the broken Poincar\'{e} inequality,
  Proposition~\ref{prop:poincareG}. 

  If now for all $\elm'\in\gridk$, with $\elm'\subset\omegak(\elm)$, we have
  $\elm'\not\in\gridk^{++}$, which
  implies $\elm\in\gridk\setminus\gridk^{++}$. Then,
  thanks to Lemma~\ref{lem:Pik}\eqref{Pik:3}, we
  have that $\Pi_kv$ is continuous across $\partial\elm$, i.e.,
  $\jump{\Pi_kv}|_{\partial\elm}=0$. 
  On the contrary, assuming that there exists $\elm'\in\gridk^{++}$,
  with $\elm'\in\neighk(\elm)$, we conclude that
  $\elm\in\neighk(\elm')$ and thus $\elm\in\grid^+$.
  From the local quasi
  uniformity, we thus have for all $\elm''\in\grid_\ell$ with  $\elm''\cap\elm\neq\emptyset$
  that $|\elm''|\eqsim |\elm|$.
  Let $z\in \nodes_k(\elm)$; then, according to \eqref{df:Pikz}, we have that
  \begin{align*}
    \jump{\Pi_kv}|_{\partial\elm}(z)=
    \begin{cases}
      \jump{v}|_{\partial\elm}(z),\quad&\text{if}~\exists 
      \elm'\in\neighk(z)\cap\gridk^{++};
      \\
      0,&\text{else.}
    \end{cases}
  \end{align*}
  Using standard scaling arguments, this implies
  \begin{align*}
    \int_{\partial\elm}\jump{\Pi_kv}^2\ds&\eqsim
    |\partial\elm|\sum_{z\in\nodes_k\cap\partial\elm}
    \big(\jump{\Pi_kv}|_{\partial\elm}(z)\big)^2
                                           = 
    |\partial\elm|\sum_{z\in\nodes_k\cap\partial\elm}
    \big(\jump{v}|_{\partial\elm}(z)\big)^2
    \\
    &\le
     |\partial\elm|\sum_{z\in\nodes_\ell\cap\partial\elm}\big(\jump{v}|_{\partial\elm}(z)\big)^2
      \eqsim \int_{\partial\elm}\jump{v}^2\ds.
  \end{align*}
  Combining this with~\eqref{eq:localstab} proves the local
 bound in the case $\ell<\infty$.

  For $\ell=\infty$, we observe
  that a bound similar to~\eqref{eq:localstab} can be obtained with
  Lemma~\ref{lem:poincare} instead of
  Proposition~\ref{prop:poincareG}. The local bound follows then by
  arguing as in the case $\ell<\infty$.
\end{proof}

\begin{cor}[Interpolation estimate]\label{cor:PikIpol}
  For $v\in\V_\ell$, $k\le\ell\in\N\cup\{\infty\}$, we have that 
  \begin{multline*}
    \int_\elm|\nablaG v-\nablaG \Pi_k v|^2\dx + \int_\elm \hk^{-2}
    |v-\Pi_k v|^2+\int_{\partial\elm}\hk^{-1}
    \jump{v-\Pi_k v}^2
    \\\Cleq \int_{\omegak(\elm)}|\nablaG v|^2\dx +\sum_{S\in\sides_\ell,S\subset\omegak(\elm)}\int_{S}\hk^{-1}
    \jump{v}^2,
  \end{multline*}
  where we set $\grid_\ell:=\grid^+$ and $h_\ell:=h_+$, when $\ell=\infty$.
  The constant depends only on $d$, $r$ and the shape
    regularity of $\gridk[0]$.
\end{cor}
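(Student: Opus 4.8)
The plan is to derive Corollary~\ref{cor:PikIpol} from the stability estimate of Lemma~\ref{lem:PikStab}, the local polynomial invariance of $\Pi_k$ (Lemma~\ref{lem:Pik}\eqref{Pik:5}), and the broken Poincar\'e inequalities (Proposition~\ref{prop:poincareG} for $\ell<\infty$, Lemma~\ref{lem:poincare} for $\ell=\infty$). As in the proof of Lemma~\ref{lem:PikStab}, summing over $\elm\in\gridk$ and exploiting the finite overlap of the patches $\omegak(\elm)$ reduces everything to the corresponding local estimate for a fixed $\elm\in\gridk$. Throughout one uses $h_k\eqsim h_\elm$ on $\omegak(\elm)$ from shape regularity of $\gridk$, and, when $\ell=\infty$, one reads $\gridk[\ell]:=\grid^+$, $h_\ell:=h_+$, $\sides_\ell:=\sides^+$ and invokes Lemma~\ref{lem:poincare} in place of Proposition~\ref{prop:poincareG}; the argument is otherwise identical in the two cases.

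For the gradient contribution I would simply write $\int_\elm|\nablaG v-\nablaG\Pi_k v|^2\dx\le 2\int_\elm|\nablaG v|^2\dx+2\int_\elm|\nabla\Pi_k v|^2\dx$, bound the first summand by $\int_{\omegak(\elm)}|\nablaG v|^2\dx$, and bound the second directly by the left-hand side of Lemma~\ref{lem:PikStab}. For the $L^2$-contribution, put $v_\elm:=|\omegak(\elm)|^{-1}\int_{\omegak(\elm)}v\dx$; since the constant $v_\elm$ lies in $\P_r(\omegak(\elm))$, Lemma~\ref{lem:Pik}\eqref{Pik:5} gives $\Pi_k v_\elm|_\elm=v_\elm$, hence $v-\Pi_k v=(v-v_\elm)-\Pi_k(v-v_\elm)$ on $\elm$. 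Combining $L^2$-stability (Lemma~\ref{lem:Pik}\eqref{Pik:1} with $p=2$) with the broken Poincar\'e inequality then yields
\begin{multline*}
  \int_\elm h_k^{-2}|v-\Pi_k v|^2\dx
  \Cleq h_\elm^{-2}\norm[\omegak(\elm)]{v-v_\elm}^2\\
  \Cleq \int_{\omegak(\elm)}|\nablaG v|^2\dx+\sum_{S\in\sides_\ell,\,S\subset\omegak(\elm)}\int_S h_\ell^{-1}\jump{v}^2\ds .
\end{multline*}
Finally, for the jump contribution I would split $\int_{\partial\elm}h_k^{-1}\jump{v-\Pi_k v}^2\ds\le 2\int_{\partial\elm}h_k^{-1}\jump{v}^2\ds+2\int_{\partial\elm}h_k^{-1}\jump{\Pi_k v}^2\ds$; the first term is handled by decomposing every coarse face $S\subset\partial\elm$ into the finitely many fine faces $S_i\in\sides_\ell$ it contains (the meshes being nested under newest vertex bisection), noting that on each $S_i$ the coarse and the fine jump of $v$ agree and that $h_k$ is constant on $S$, so this term equals $\sum_{S_i\in\sides_\ell,\,S_i\subset\partial\elm}\int_{S_i}h_k^{-1}\jump{v}^2\ds\le\sum_{S\in\sides_\ell,\,S\subset\omegak(\elm)}\int_S h_k^{-1}\jump{v}^2\ds$, while the second term is again controlled by the left-hand side of Lemma~\ref{lem:PikStab}. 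Collecting the three bounds and using $h_\ell\le h_k$ gives the claimed local estimate.

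The point requiring care is the bookkeeping of the mesh-size weights on the faces: Proposition~\ref{prop:poincareG} and Lemma~\ref{lem:PikStab} deliver the face terms weighted by the \emph{fine} mesh size $h_\ell^{-1}$ (resp.\ $h_+^{-1}$), and one must check consistency with the asserted bound. This is where the dichotomy from the proof of Lemma~\ref{lem:PikStab} re-enters: either the jump of $\Pi_k v$ across $\partial\elm$ vanishes, so that term is absent, or $\elm\in\grid^+$, in which case $\elm$ is an unrefined element of $\gridk[\ell]$ and, by conformity of newest vertex bisection, every element of $\gridk[\ell]$ meeting $\omegak(\elm)$ is obtained from the corresponding element of $\gridk$ by at most a bounded number (depending only on $d$) of bisections, so that $h_\ell\eqsim h_k$ on $\omegak(\elm)$; in either situation the fine and coarse weights are interchangeable. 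I expect this reconciliation of the two mesh sizes — rather than any one of the three individual estimates, which are routine consequences of the already-established properties of $\Pi_k$ and of the broken Poincar\'e inequality — to be the main obstacle.
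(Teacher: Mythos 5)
Your decomposition follows the paper's one-line proof exactly: Lemma~\ref{lem:Pik}\eqref{Pik:5} (to replace $v$ by $v-v_\elm$ with $v_\elm$ the patch mean), the stability Lemma~\ref{lem:PikStab}, and the broken Poincar\'e inequality (Proposition~\ref{prop:poincareG} for $\ell<\infty$, Lemma~\ref{lem:poincare} for $\ell=\infty$). The three local bounds for the gradient, $L^2$ and jump contributions are correct and are the intended argument.

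However, your final reconciliation of the mesh-size weights is not sound, and in fact the discrepancy you (rightly) flagged points at a typo in the corollary's statement rather than a gap the dichotomy can close. Two problems with your argument. First, the broken Poincar\'e inequality is invoked \emph{unconditionally} in the $L^2$ step, irrespective of whether $\jump{\Pi_k v}|_{\partial\elm}$ vanishes; so the branch ``$\jump{\Pi_k v}|_{\partial\elm}=0$, hence that term is absent'' does nothing to remove the $h_\ell^{-1}$-weighted jump produced by Proposition~\ref{prop:poincareG}. Second, in the other branch you claim $h_\ell\eqsim h_k$ on all of $\omegak(\elm)$ when $\elm\in\grid^+$. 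Conforming newest-vertex bisection controls the grading only across \emph{face}-adjacent elements; elements of $\gridk[\ell]$ that lie in $\omegak(\elm)$ but merely share a vertex with $\elm$, or are buried in the interior of a refined neighbour $\elm'\in\neighk(\elm)$, can become arbitrarily small while $\elm$ stays unrefined. The paper's proof of Lemma~\ref{lem:PikStab} only claims $|\elm''|\eqsim|\elm|$ for $\elm''\in\gridk[\ell]$ with $\elm''\cap\elm\neq\emptyset$, and even that is used only to compare the jump of $\Pi_k v$ on $\partial\elm$ to the jump of $v$ on $\partial\elm$, not to rescale the whole patch. So what your (correct) three local estimates actually deliver is the bound with $h_\ell^{-1}$ (respectively $h_+^{-1}$) on the right-hand side, exactly as in the right-hand sides of Lemma~\ref{lem:PikStab} and Proposition~\ref{prop:poincareG}/Lemma~\ref{lem:poincare}; the $h_k^{-1}$ in the displayed corollary appears to be a misprint for $h_\ell^{-1}$. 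This is harmless for the paper because the corollary is only invoked in Lemma~\ref{lem:conv2} with $\phi\in\V_k$, i.e.\ with $\ell=k$, where the two weights coincide.
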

\begin{proof}
  The claim follows from Lemma~\ref{lem:Pik}\eqref{Pik:5}, together
  with the stability Lemma~\ref{lem:PikStab} and the local Poincar\'e inequality
  from Proposition~\ref{prop:poincareG}, respectively,
  Lemma~\ref{lem:poincare}. 
\end{proof}

The next result concerns the convergence of the quasi-interpolation.
\begin{lem}\label{lem:Pik2}
  Let $v\in\V_\infty$; then,
  \begin{align*}
    \enorm[k]{v-\Pi_k v}\to 0 \qquad\text{and} \qquad \enorm[\infty]{v-\Pi_k v}\to 0
  \end{align*}
  as $k\to\infty$.
\end{lem}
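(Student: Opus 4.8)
The plan is to exploit the density-type characterisation built into the definition of $\V_\infty$ together with the uniform stability of $\Pi_k$ from Lemma~\ref{lem:PikStab} and the interpolation estimate of Corollary~\ref{cor:PikIpol}. Fix $v\in\V_\infty$ and $\epsilon>0$. By definition of $\V_\infty$ there is a sequence $v_m\in\V_m$ with $\enorm[m]{v-v_m}+\norm[\Omega]{v-v_m}\to 0$ and $\limsup_m\enorm[m]{v_m}<\infty$; fix $m$ large enough that $\enorm[m]{v-v_m}<\epsilon$. The idea is then to split, for $k\le m$,
\begin{align*}
  \enorm[k]{v-\Pi_k v}\le \enorm[k]{v-v_m}+\enorm[k]{v_m-\Pi_k v_m}+\enorm[k]{\Pi_k(v_m-v)}.
\end{align*}
The first term is bounded by $\enorm[m]{v-v_m}<\epsilon$ using Proposition~\ref{prop:V} (the energy norms increase along the sequence, and $\sides_k\subset\sides_m$ with $h_k\ge h_m$). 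The third term is controlled by $\enorm[m]{v_m-v}<\epsilon$ via the global stability statement of Lemma~\ref{lem:PikStab} (with $\ell=m$), since $v_m-v\in\V_\infty$ may be fed into $\Pi_k$ through Corollary~\ref{cor:PikIpol} or directly; one must only note $\Pi_k$ applied to an element of $\V_\infty$ is legitimate because $v\in\V_\infty\subset BV(\Omega)\subset L^1(\Omega)$, the domain of $\Pi_k$, and its traces on $\sides_k$ are well defined.

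The crux is the middle term $\enorm[k]{v_m-\Pi_k v_m}$ with $v_m\in\V_m$ \emph{fixed} and $k\to\infty$. Here I would argue elementwise over $\elm\in\gridk[m]$ (a fixed finite partition once $m$ is fixed). On elements of $\grid^+$ whose entire $k$-neighbourhood lies in $\grid^+$ — which by Lemma~\ref{L:Nk=NK} eventually covers every element of $\gridk[m]^+$ — the operator $\Pi_k$ reproduces $v_m$ exactly on $\Omega_k^{++}$ by Lemma~\ref{lem:Pik}\eqref{Pik:4}, and moreover the jumps of $v_m-\Pi_k v_m$ vanish on $\sides_k^{++}$; so these elements contribute nothing for $k$ large. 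The remaining elements of $\gridk[m]$ are those meeting $\Omega^-$ together with a shrinking collar; by Lemma~\ref{lem:Omegastar} the region $\Omega_k^-\setminus\Omega^-$ (and hence the support of the difference outside $\Omega^+$) has measure tending to $0$, while $\|h_k\chi_{\Omega_k^-}\|_{L^\infty}\to0$. On this region I would invoke Corollary~\ref{cor:PikIpol}: the $\enorm[k]{\cdot}$-contribution of $v_m-\Pi_k v_m$ over $\elm$ is bounded by the energy of $v_m$ over $\omegak(\elm)$, and summing over the relevant elements this is the energy of $v_m$ over a region of vanishing measure (together with the jump mass of $v_m$ carried by faces in that shrinking region) — which tends to $0$ by absolute continuity of the integral and by the uniform bound $\limsup_m\enorm[m]{v_m}<\infty$, once one also uses $\enorm[k]{v_m}\le\enorm[m]{v_m}$ from Proposition~\ref{prop:V}. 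Hence $\enorm[k]{v_m-\Pi_k v_m}\to0$ as $k\to\infty$ for fixed $m$, and combining the three bounds gives $\limsup_k\enorm[k]{v-\Pi_k v}\le 2\epsilon$; letting $\epsilon\to0$ proves the first convergence.

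For the second statement, $\enorm[\infty]{v-\Pi_k v}\to0$, I would use that $\Pi_k v\in\V_\infty\cap\V_k$ with $\enorm[k]{\Pi_k v}=\enorm[\infty]{\Pi_k v}$ by Lemma~\ref{lem:Pik}\eqref{Pik:5a}, and that $v-\Pi_k v$ has jumps supported on $\sides^+$ only in the relevant sense. Concretely, since $v-\Pi_k v\in\V_\infty$ (as a difference of elements of $\V_\infty$), Proposition~\ref{prop:V} gives $\enorm[k]{v-\Pi_k v}\nearrow\enorm[\infty]{v-\Pi_k v}$ only for fixed argument, so instead I would write $\enorm[\infty]{v-\Pi_k v}^2=\norm{\nablaG(v-\Pi_k v)}^2+\bar\sigma\norm[\Gamma^+]{h_+^{-1/2}\jump{v-\Pi_k v}}^2$ and bound each piece by the corresponding $\enorm[\ell]{\cdot}$-quantity for suitable $\ell\ge k$ refining $\grid^+$ locally, then pass $k\to\infty$ using the first part of the lemma plus a diagonal/monotonicity argument; alternatively, observe $\jump{v-\Pi_k v}$ on $\sides^+$ agrees with its value seen at mesh level $\ell(\side)$, so $\enorm[\infty]{v-\Pi_k v}=\sup_{\ell}\enorm[\ell]{v-\Pi_k v}$ along a cofinal family, and apply the first convergence with $k$ replaced appropriately. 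The main obstacle is precisely this middle term: making rigorous that the ``bad'' elements where $\Pi_k$ genuinely alters $v_m$ are confined, for large $k$, to a set of vanishing measure (and vanishing $h_k$), so that the uniformly-bounded energy of the fixed function $v_m$ on that set disappears; everything else is a triangle inequality plus the already-established stability and interpolation bounds.
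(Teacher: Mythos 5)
There is a genuine gap, and it sits right at the heart of the decomposition. You split
\begin{align*}
  \enorm[k]{v-\Pi_k v}\le \enorm[k]{v-v_m}+\enorm[k]{v_m-\Pi_k v_m}+\enorm[k]{\Pi_k(v_m-v)}
\end{align*}
and want to send $k\to\infty$ with $m$ fixed. But the definition of $\V_\infty$ only controls the \emph{diagonal} quantity $\enorm[k]{v-v_k}$; it gives no control over $\enorm[k]{v-v_m}$ for $k\gg m$. Worse, for a fixed $v_m\in\V_m$ this quantity typically \emph{blows up}: $v_m$ has nonzero jumps across faces $\side\in\sides_m$ lying inside $\Omega^-$, these faces get refined forever, and the jump penalty $\int_{\side} h_k^{-1}\jump{v_m}^2\ds$ then tends to $\infty$. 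The same pathology wrecks your middle term, because $\Pi_k v_m$ is continuous on $\Omega_k^-$ (Lemma~\ref{lem:Pik}\eqref{Pik:3}), so $\jump{v_m-\Pi_k v_m}=\jump{v_m}$ on $\sides_k^-$ and those contributions again scale like $h_k^{-1}$. Your appeal to ``vanishing measure'' confuses $\Omega_k^-\setminus\Omega^-$ (which does shrink) with the whole of $\Omega_k^-\supset\Omega^-$ (which does not). The third term also cannot be handled by Lemma~\ref{lem:PikStab} as stated, because $v_m-v\notin\V_\infty$ (since $v_m$ is genuinely discontinuous in $\Omega^-$), so the $\ell=\infty$ case of that lemma does not apply to it. Finally, your first bound uses the monotonicity from Proposition~\ref{prop:V} in the wrong direction: the energy norms \emph{increase} under refinement, so for $k\ge m$ you get $\enorm[k]{\cdot}\ge\enorm[m]{\cdot}$, not $\le$. (You in fact write ``for $k\le m$'', which contradicts the subsequent ``with $v_m$ fixed and $k\to\infty$''.)

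The paper's proof sidesteps all of this by never introducing an intermediate approximant $v_m$. It works directly with $v$ and $\Pi_k v$: by Lemma~\ref{lem:Pik}\eqref{Pik:4} and \eqref{Pik:3} the error is supported on $\gridk^-$ and $\sides_k^-$; the jump contribution is bounded, via Lemma~\ref{lem:PikStab}, by $\int_{\sides^+\setminus\sides_k^{++}}h_+^{-1}\jump{v}^2\ds$, which is the tail of the convergent series $\enorm[\infty]{v}^2<\infty$; and the gradient contribution is handled by replacing $v|_{\Omega^-}$ by a smooth $v_\epsilon\in H_0^2(\Omega)$ (density of $H_0^2$ in $H_0^1$) and invoking Bramble--Hilbert plus $\|h_k\chi_{\Omega_k^-}\|_{L^\infty}\to 0$ from Lemma~\ref{lem:Omegastar}. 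The mechanism that makes the gradient part small is thus the smoothness of the approximant $v_\epsilon$ combined with $h_k\to 0$ on $\Omega_k^-$, \emph{not} shrinking measure, and the mechanism for the jumps is that their mass is an absolutely convergent series indexed by $\sides^+$, \emph{not} stability transferred from $v_m$. Your plan identifies some of the right ingredients (exact reproduction on $\gridk^{++}$, the role of $\grid^+$, $h_k\to 0$), but the triangle inequality through a fixed $v_m$ cannot be repaired.
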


\begin{proof}
 For brevity, set
  $v_k:=\Pi_k v\in\Vk$. Thanks to Lemma~\ref{lem:poincare} and Lemma~\ref{lem:Pik}\eqref{Pik:4} and
  \eqref{Pik:3},
  we have that
  \begin{align*}
    \enorm[k]{v-v_k}^2&\Cleq\int_{\gridk\setminus\gridk^{++}}|\nablaG v-\nablaG
    v_k|^2\dx+
    \int_{\sides_k\setminus\sides_k^{++}}\hG[k]^{-1}\abs{\jump{v-v_k}}^2\ds
    \\
    &\le\int_{\gridk^-}|\nablaG v-\nablaG
    v_k|^2\dx+
    \int_{\sides_k^-}\hG[k]^{-1}\abs{\jump{v-v_k}}^2\ds
      \\
    &=I^-_k +II^-_k.
  \end{align*}
  We conclude from  Lemma~\ref{lem:PikStab} that
  \begin{align*}
   II^-_k&=
               \int_{\sides_k^-}\hG[k]^{-1}\abs{\jump{v-v_k}}^2\ds
    \Cleq \sum_{\elm\in\gridk^-}\sum_{\elm'\in\grid^+,
    \elm'\subset\omegak(\elm)}~\int_{\partial\elm'} \hG[+]^{-1}\jump{v}^2\ds
    \\
    &\Cleq \int_{\sides^+\setminus\sides_k^{++}} \hG[+]^{-1}\jump{v}^2\ds.
  \end{align*}
  The term on the right hand side is the tail of a convergent series, since it is
  bounded thanks to 
  $\enorm[\infty]{v}<\infty$ and all of its summands are positive. Therefore,  $II_k^-\to0$ as
  $k\to\infty$. 

  Thus, it remains to prove that $I_k^-\to 0$ as $k\to \infty$. To this
  end, we recall that thanks to the  definition of
  $H^1_{\partial\Omega\cap\partial\Omega^-}(\Omega^-)$ we have that
  $v|_{\Omega^-}=\tilde v|_{\Omega^-}$ for some function $\tilde v\in H_0^1(\Omega)$. Since $H_0^2(\Omega)$ is dense in $H_0^1(\Omega)$, for $\epsilon>0$, there exists $v_\epsilon\in H^2_0(\Omega)$ 
  such
  that $\|\tilde v-v_\epsilon\|_{H^1(\Omega^-)}\le
  \|\tilde v-v_\epsilon\|_{H^1(\Omega)}<\epsilon$.
  Combining
  Lemma~\ref{lem:Pik}\eqref{Pik:5} and \eqref{Pik:1} with
  standard estimates
  \cite{ScottZhang:90,DemlowGeorgoulis:12} for $H_0^1(\Omega)$
  functions, with
  the  Bramble-Hilbert Lemma (see, e.g., \cite{BrennerScott:02}),
  we 
  obtain 
  \begin{multline*}
    \int_{\gridk^-}|\nablaG v-\nabla
    v_k|^2\dx\\
    \begin{aligned}
      &\Cleq \int_{\gridk^-}|\nabla v_\epsilon-\nabla
      \Pi_kv_\epsilon|^2+|\nablaG (v- v_\epsilon)-\nabla
      \Pi_k(v-v_\epsilon)|^2\dx
      \\
      &\Cleq
      \int_{\neighk(\gridk^-)}\hG[k]^2\sum_{|\alpha|=2}|D^\alpha
      v_\epsilon|^2\dx+\int_{\neighk(\gridk^-)}|\nablaG
      (v-v_\epsilon)|^2\dx
      \\
      &\Cleq \|h_k\chi_{\Omega_k^-}\|_{L^\infty(\Omega)}^2
      \int_{\Omega}\sum_{|\alpha|=2}|D^\alpha
      v_\epsilon|^2\dx+\int_{\neighk(\gridk^-)}|\nablaG
      (v-v_\epsilon)|^2\dx,
    \end{aligned}
  \end{multline*}
  where we have used that
  $\|h_k\|_{L^\infty(\Omega(\neighk(\gridk^-)))}\Cleq
  \|h_k\chi_{\Omega_k^-}\|_{L^\infty(\Omega)}\to 0$ as $k\to\infty$, thanks to the local quasi-uniformity
  of $\gridk$ and 
  Lemma~\ref{lem:Omegastar}. Moreover, we conclude
  $\int_{\neighk(\gridk^-)}|\nablaG (v-v_\epsilon)|^2\dx\rightarrow
  \int_{\Omega^-}|\nabla (v-v_\epsilon)|^2\dx<\epsilon$ from
  Lemma~\ref{lem:Omegastar} and the absolute continuity of the
  Lebesgue integral. Consequently, 
  $\lim_{k\to\infty}I^-_k\Cleq \epsilon $, which completes the proof
  of the first claim, since $\epsilon>0$ is arbitrary. 

  The second claim follows similarly by replacing $\sides_k$ by
  $\sides^+$ and noting that $\enorm[k]{\Pi_kv}=\enorm[\infty]{\Pi_k v}$,
  since $\Pi_k v$ is continuous in $\Omega\setminus\Omega^+$.
\end{proof}

\begin{proof}[Proof of Lemma~\ref{lem:V}]
  The positivity of $\enorm[\infty]{\cdot}$ on $\V_\infty$
    follows from Lemma~\ref{lem:Pik2} together with 
    Corollary~\ref{C:FriedrichsVinfty}.  

  In order to prove that $\V_\infty$ is complete with respect to
  $\enorm[\infty]{\cdot}$, let 
  $\{v^\ell\}_{\ell\in\N}\subset\V_\infty$ be a Cauchy sequence with
  respect to $\enorm[\infty]{\cdot}$. Note that thanks to the Friedrichs
  inequality (Corollary~\ref{C:FriedrichsVinfty}),
  there exists the limit $v^\ell\to v\in L^2(\Omega)$; this is the
  candidate for the limit of $v^\ell$ in $\V_\infty$. 

  We first observe that, 
  since 
  $v^\ell|_\elm\in\P_r$ for all $\elm\in\grid^+$, it follows
  from the definition of $\enorm[\infty]{\cdot}$ that 
  $v|_\elm\in\P_r$ for all $\elm\in\grid^+$.
  Moreover, 
  Propositions~\ref{P:|Dv|<dG} and~\ref{lem:Pik2} imply
  that $\{v^\ell\}_{\ell\in\N}$ is also a Cauchy Sequence in
  $BV(\Omega)$ and thus $v\in BV(\Omega)$.
  Therefore, $v$ has $L^1$-traces on each $\partial \elm$,
  \(\elm\in\gridk\), $k\in
  \N$.

  Next, we deal with the jump terms. To this end, we first
  observe that, for $k\in\N$, $\{v^\ell\}_{\ell\in\N}$ is also a Cauchy sequence with
  respect to the $\enorm[k]{\cdot}$-norm and uniqueness of
  limits imply  $\jump{v^\ell}|_\side\to \jump{v}|_\side$ in $L^2(\side)$ as
  $\ell\to\infty$, for all
  $\side\in\sides_k$, in the sense of traces.
  Let $\epsilon>0$ arbitrary fixed, then there exists $L=L(\epsilon)$,
  such that $\enorm[k]{v^j-v^\ell}\le \enorm[\infty]{v^j-v^\ell}\le
  \epsilon$ for all $j\ge \ell\ge L$. Fix $\ell\ge L$, then thanks to
  Proposition~\ref{prop:V}, there exists $K\equiv K(\epsilon,L)$, such that for all $k\ge K$, we have
  \begin{align*}
    \int_{\sides_k\setminus\sides_k^+}h_+^{-1} \jump{v^L}^2\ds\le
  \epsilon^2.
  \end{align*}
  Consequently, we have
\begin{align*}
  \begin{aligned}
    \int_{\sides_k} \hG[k]^{-1} \jump{v}^2 \ds &=
    \int_{\sides_k\setminus\sides_k^+} \hG[k]^{-1} \jump{v}^2
    \ds+\int_{\sides_k^+} \hG[k]^{-1} \jump{v}^2 \ds
    \\
    &=\lim_{\ell\to\infty}\int_{\sides_k\setminus\sides_k^+} \hG[k]^{-1}
    \jump{v^\ell}^2 \ds+\int_{\sides_k^+} \hG[k]^{-1} \jump{v}^2
    \ds.
  \end{aligned}
\end{align*}
Thus,  it follows from
\begin{align}
  \int_{\sides_k\setminus\sides_k^+} \hG[k]^{-1}
\jump{v^\ell}^2 \ds\le 2\enorm[k]{v^\ell-v^L}^2+2\int_{\sides_k\setminus\sides_k^+} \hG[k]^{-1}
\jump{v^L}^2 \ds\le 4\epsilon^2,\label{eq:bnd-restjump}
\end{align}
for $\ell\ge L$, that
\begin{align}\label{conv:jumps_CS}
  \int_{\sides_k} \hG[k]^{-1}
  \jump{v}^2 \ds \to
  \int_{\sides^+} \hG[+]^{-1}
  \jump{v}^2 \ds \qquad\text{as}~ k\to\infty,
\end{align}
since $\epsilon>0$ is arbitrary.

We next verify that $v|_{\Omega^-}\in
    H^1_{\partial\Omega\cap\partial\Omega^-}(\Omega^-)$, i.e., that $v$ is a
  restriction of a function from $H_0^1(\Omega)$ to $\Omega^-$. To
  this end, for each $\ell,m\in\N$, we
  define $v^\ell_m:=\Pi_mv^\ell\in\V_m$ for $\ell\ge m\in\N$ and since $v^\ell_m\in
  C(\Omega\setminus\Omega^{+}_k)\subset
  C(\Omega\setminus\Omega^{+}_m)$ (see
  Lemma~\ref{lem:Pik}\eqref{Pik:3}) for $k\ge m$,  we have that
  $\enorm[m]{v_m^\ell}=\enorm[k]{v_m^\ell}=\enorm[\infty]{v_m^\ell}$. 
  Thanks to Lemma~\ref{lem:Pik2}, for each $\ell\in\N$, there exists a
  monotone sequence 
  $\{m_\ell\}_\ell\in\N$, such that $\enorm[\infty]{v^\ell-v^\ell_{m_\ell}}\le \frac1\ell$ and
  thus
   \begin{align*}
    \enorm[m_\ell]{v_{m_\ell}^\ell}=\enorm[\infty]{v_{m_\ell}^\ell}\le \enorm[\infty]{v^\ell-v_{m_\ell}^\ell}
+\enorm[\infty]{v^\ell}<\frac1\ell+\enorm[\infty]{v^\ell}<\infty 
   \end{align*}
   Consequently, the conforming interpolation
   $\mathcal{I}_{m_\ell}v_{m_\ell}^\ell\in\V_{m_\ell}\cap H_0^1(\Omega)$ 
  from Proposition~\ref{P:dist-dGcG} is bounded uniformly in
  $H_0^1(\Omega)$ and thus there exists a weak limit $\tilde v\in
  H_0^1(\Omega)$ of a subsequence, which for convenience we denote with the same label. Moreover, again from
  Proposition~\ref{P:dist-dGcG}, we have
  \begin{align}\label{eq:L2v-Imvm}
    \begin{aligned}
      \norm[\Omega^-]{v-\mathcal{I}_{m_\ell}v^\ell_{m_\ell}}&\le \norm[\Omega]{v-v^\ell}+
      \norm[\Omega]{v^\ell-v^\ell_{m_\ell}}+
      \norm[\Omega^-]{v^\ell_{m_\ell}-\mathcal{I}_{m_\ell}v^\ell_{m_\ell}}
      \\
      &\Cleq \norm[\Omega]{v-v^\ell}+\frac1\ell+
      \int_{\sides(\gridk[m_\ell]\setminus\gridk[m_\ell]^+)}\jump{v^\ell_{m_\ell}}^2\ds
      \\
      &\le \norm[\Omega]{v-v^\ell}+\frac2\ell+ \norm[L^\infty(\Omega)]{\chi_{\Omega_{m_\ell}^-}h_{m_\ell}}\enorm[m_\ell]{v^\ell}^2,
    \end{aligned}
  \end{align}
  which vanishes as $\ell\to\infty$ thanks to the Friedrichs
  inequality (Corollary~\ref{C:FriedrichsVinfty}) and
  Lemma~\ref{lem:Omegastar}. Therefore, $v|_{\Omega^-}=\tilde
  v|_{\Omega^-}$ and we can define the piecewise gradient of $v$ as
  in~\eqref{df:nablapw}.

  We shall next show that $\enorm[\infty]{v-v^\ell}\to0$ as $\ell \to
  \infty$. Arguing similar as for~\eqref{conv:jumps_CS}, we have 
  \begin{align*}
    \int_{\sides^+}h_+^{-1}\jump{v-v^\ell}^2\ds\to
    0\quad\text{as}~\ell\to \infty.
  \end{align*}
  Consequently, it remains to show that $\norm{\nablaG v -\nablaG
    v^\ell}\to 0$ as $\ell\to\infty$. To this end, we observe that
  $\{\nablaG v^\ell\}_\ell$ is a Cauchy Sequence in $L^2(\Omega)^d$
  and thus there exists $\vec d\in L^2(\Omega)^d$ with $\norm{\nablaG
    v^\ell-\vec d}\to 0$ as $\ell\to\infty$ and it thus suffices to prove
  $\vec d=\nablaG v$. Let $\phi\in C_0^\infty(\Omega)$, then we have
  from Lemma~\ref{lem:Pik2}
  for the distributional derivative on the one hand, that
  \begin{align*}
    \langle Dv^\ell,\phi\rangle&=\int_{\Omega}
      \nablaG v^\ell \cdot\phi \dx -\int_{\sides^+} \jump{v^\ell}\cdot\phi\ds
                                 \to \int_{\Omega}
      \vec d \cdot\phi \dx -\int_{\sides^+} \jump{v}\cdot\phi\ds
  \end{align*}
  as $\ell\to\infty$.
  On the other hand, 
  \begin{align*}
    \begin{aligned}
      \langle Dv^\ell,\phi\rangle&=\int_{\Omega\setminus\Omega_k^+}
      \nablaG v^\ell \cdot\phi \dx +\int_{\Omega_k^+} \nablaG v^\ell
      \cdot\phi \dx -\int_{\sides^+} \jump{v^\ell}\cdot\phi\ds
      \\
      &=\int_{\Omega\setminus\Omega_k^+}
      \nabla\mathcal{I}_{m_\ell}v^\ell_{m_\ell} \cdot\phi \dx
      +\int_{\Omega\setminus\Omega_k^+} \nablaG
      (v^\ell-\mathcal{I}_{m_\ell}v^\ell_{m_\ell})
      \cdot\phi \dx\\
      &\quad+\int_{\Omega_k^+} \nablaG v^\ell \cdot\phi \dx
      -\int_{\sides^+} \jump{v^\ell}\cdot\phi\ds.
    \end{aligned}
  \end{align*}
  In order to estimate the second term, we employ
  Proposition~\ref{P:dist-dGcG}, and obtain for some arbitrary given
  $\epsilon>0$ that 
  \begin{align*}
    \int_{\Omega\setminus\Omega_k^+}\abs{ \nablaG
    (v^\ell-\mathcal{I}_{m_\ell}v^\ell_{m_\ell})}^2\dx&\Cleq \int_{\Omega}\abs{ \nablaG
                                                        (v^\ell-v^\ell_{m_\ell})}^2\dx
    \\
                                                      &\quad+\int_{\Omega\setminus\Omega_k^+}\abs{ \nablaG
                                                        (v^\ell_{m_\ell}-\mathcal{I}_{m_\ell}v^\ell_{m_\ell})}^2\dx
    \\
                                                      &\Cleq \frac1\ell+\int_{\sides_k^-}h_{m_\ell}^{-1}\jump{v^\ell}^2\ds\le\frac1\ell+\epsilon^2
  \end{align*}
  for all $\ell\ge L(\epsilon)$ and $k\ge K(\epsilon,L)$ similarly as in~\eqref{eq:bnd-restjump}.
  Recalling that $\tilde v$ is the weak limit of
  $\{\mathcal{I}_{m_\ell}v^\ell_{m_\ell}\}_\ell$ in $H_0^1(\Omega)$
  and $v^\ell$ converges strongly in $\P_r(\grid_k^+)$,
  we thus conclude that
  \begin{align*}
    \abs{\int_\Omega\big(\chi_{\Omega\setminus\Omega_k^+}\nabla\tilde
    v+\chi_{\Omega_k^+}\nablaG v-\vec d\big)\cdot\phi\dx}\le \epsilon \norm[\Omega]{\phi}.
  \end{align*}
  Recalling~\eqref{df:nablapw} the assertion follows by letting $k\to\infty$ from the uniform integrability
  of $\nabla \tilde v$ and $\nablaG v|_{\Omega^+}$.

  Finally note that $\V_{k}\subset\V_j$ for $j\ge k$ and thus defining 
  $w_k:=v_{m_\ell}^\ell$ for  $k\in\{m_\ell,\ldots,m_{\ell+1}-1\}$
  yields $w_k\in\V_k$. Consequently, 
  \begin{align*}
  \norm{v-w_k}+\enorm[k]{v-w_k}\Cleq
  \enorm[\infty]{v-w_k}=\enorm[\infty]{v-v_{m_\ell}^\ell}\le
  \enorm[\infty]{v-v^\ell}+\frac1\ell,
  \end{align*}
  where we have used that the
  Friedrichs inequality (Corollary~\ref{C:FriedrichsVinfty}) is
  inherited since
  \(\enorm[\infty]{v-w_k}=\lim_{\ell\to\infty}\enorm[\infty]{v_\ell-w_k}\).
The right-hand side
  vanishes because $\ell\to
  \infty$ as
  $k\to\infty$. 
\end{proof}

\begin{proof}[Proof of Corollary~\ref{cor:u8}]
  The assertion follows from Lemma~\ref{lem:V} and the observation that 
  \begin{align*}
    \enorm[\infty]{v}^2\Cleq\bilin[\infty]{v}{v}\qquad\text{and}
    \qquad\bilin[\infty]{v}{w}\Cleq \enorm[\infty]{v}\enorm[\infty]{w}
  \end{align*}
  for all $v,w\in\V_\infty$. Indeed, the continuity follows with standard
  techniques using~\eqref{eq:lift-stab} and the coercivity is a consequence of 
  \begin{align*}
    \enorm[\infty]{\Pi_kv}^2=
    \enorm[k]{\Pi_kv}^2\Cleq \bilin[k]{\Pi_kv}{\Pi_kv}=\bilin[\infty]{\Pi_kv}{\Pi_kv}
  \end{align*}
  and Lemma~\ref{lem:Pik2}.
\end{proof}

\section{(Almost) best approximation property}
\label{sec:uinfty}

In this section we shall prove that the solution
$u_\infty\in\V_\infty$ of~\eqref{eq:u8} is indeed
the limit of the discontinuous Galerkin solutions produced by
\ADGM. 
This is a consequence of the density of spaces
$\{\V_k\}_{k\in\N_0}$ in $\V_\infty$ and the (almost) best approximation
property of discontinuous Galerkin solutions; the latter generalises \cite{Gudi:10}.

\begin{lem}\label{lem:conv1}
  Let $u_\infty\in\V_\infty$ be the solution of~\eqref{eq:u8} and $u_k\in\Vk$ be the
  \DGFEM approximation from \eqref{ipdg} on $\gridk$ for some $k\in\N$
  and $u_\infty$ the unique solution of the limit problem from
  Corollary~\ref{cor:u8}.
  Then, we have
  \begin{align*}
    \enorm[k]{u_\infty-u_k}\Cleq \enorm[\infty]{u_\infty-\Pi_ku_\infty}+\frac{\scp{f}{u_k-\Pi_ku_k}-\bilin[k]{\Pi_ku_\infty}{u_k-\Pi_ku_k}}{\enorm[k]{u_k-\Pi_ku_\infty}}.
  \end{align*}
\end{lem}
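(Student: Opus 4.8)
The plan is to prove a Strang/medius-type identity in the spirit of \cite{Gudi:10}, testing the discrete problem with the discrete error $e_k:=u_k-\Pi_k u_\infty\in\Vk$ (note $\Pi_k u_\infty\in\Vk\cap\V_\infty$ by Lemma~\ref{lem:Pik}\eqref{Pik:2},\eqref{Pik:5a}) and the limit problem~\eqref{eq:u8} with a suitable element of $\V_\infty$.

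First I would use coercivity~\eqref{eq:coercive}, $\alpha\enorm[k]{e_k}^2\le\bilin[k]{e_k}{e_k}$, and, since $e_k\in\Vk$, replace $\bilin[k]{u_k}{e_k}$ by $\scp{f}{e_k}$ via~\eqref{ipdg}, obtaining $\bilin[k]{e_k}{e_k}=\scp{f}{e_k}-\bilin[k]{\Pi_k u_\infty}{e_k}$. Since $\Pi_k$ is a projection (Lemma~\ref{lem:Pik}\eqref{Pik:1}), one has $\Pi_k u_k-\Pi_k u_\infty=\Pi_k e_k$, so I split $e_k=(u_k-\Pi_k u_k)+\Pi_k e_k$. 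By bilinearity the $(u_k-\Pi_k u_k)$-part is exactly the numerator $N_k:=\scp{f}{u_k-\Pi_k u_k}-\bilin[k]{\Pi_k u_\infty}{u_k-\Pi_k u_k}$ of the claim and is kept as it stands; for the $\Pi_k e_k$-part, since $\Pi_k e_k\in\V_\infty$ (Lemma~\ref{lem:Pik}\eqref{Pik:5a}), I insert the limit equation~\eqref{eq:u8}, $\scp{f}{\Pi_k e_k}=\bilin[\infty]{u_\infty}{\Pi_k e_k}$, so that this part becomes $\bilin[\infty]{u_\infty}{\Pi_k e_k}-\bilin[k]{\Pi_k u_\infty}{\Pi_k e_k}$.

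The crucial, and I expect most delicate, step is to show that $\bilin[\infty]{\Pi_k u_\infty}{\Pi_k e_k}=\bilin[k]{\Pi_k u_\infty}{\Pi_k e_k}$, so that the above difference collapses to $\bilin[\infty]{u_\infty-\Pi_k u_\infty}{\Pi_k e_k}$. This is a matter of careful face bookkeeping: both $\Pi_k u_\infty$ and $\Pi_k e_k$ are piecewise $\P_r$ over $\gridk$ and continuous on $\Omega\setminus\Omega_k^{+}$ (Lemma~\ref{lem:Pik}\eqref{Pik:3}), hence all jump, consistency and lifting contributions over faces in $\sides_k\setminus\sides_k^{+}$ vanish in $\bilin[k]{\cdot}{\cdot}$, while faces in $\sides^{+}\setminus\sides_k^{+}$ lie interior to elements of $\gridk$ and therefore carry no jump in $\bilin[\infty]{\cdot}{\cdot}$; on the common faces $\sides_k^{+}=\sides(\gridk^{+})$ the integrands coincide because the elements of $\gridk^{+}$ are never refined, so there $h_k=h_{+}$ and $\riftS[k]=\riftS[\infty]$, and the volume terms agree since $\grid^{+}$ coincides with $\gridk$ on $\Omega_k^{+}$ and the two functions lie in $H^1(\Omega\setminus\Omega_k^{+})$.

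Finally I would estimate $\bilin[\infty]{u_\infty-\Pi_k u_\infty}{\Pi_k e_k}\le C\,\enorm[\infty]{u_\infty-\Pi_k u_\infty}\,\enorm[\infty]{\Pi_k e_k}$ by the continuity of $\bilin[\infty]{\cdot}{\cdot}$ (established in the proof of Corollary~\ref{cor:u8}), and then use $\enorm[\infty]{\Pi_k e_k}=\enorm[k]{\Pi_k e_k}\Cleq\enorm[k]{e_k}$ (Lemma~\ref{lem:Pik}\eqref{Pik:5a} and the stability Lemma~\ref{lem:PikStab}). Collecting everything yields $\alpha\enorm[k]{e_k}^2\le N_k+C\,\enorm[\infty]{u_\infty-\Pi_k u_\infty}\,\enorm[k]{e_k}$; dividing by $\enorm[k]{e_k}=\enorm[k]{u_k-\Pi_k u_\infty}$ (in the degenerate case $e_k=0$ one has $N_k=0$ and $\enorm[k]{u_\infty-u_k}=\enorm[k]{u_\infty-\Pi_k u_\infty}\le\enorm[\infty]{u_\infty-\Pi_k u_\infty}$ by Proposition~\ref{prop:V}, so the claim is trivial) and then bounding $\enorm[k]{u_\infty-u_k}\le\enorm[k]{u_\infty-\Pi_k u_\infty}+\enorm[k]{e_k}$ together with $\enorm[k]{u_\infty-\Pi_k u_\infty}\le\enorm[\infty]{u_\infty-\Pi_k u_\infty}$ (Proposition~\ref{prop:V}) gives the asserted estimate.
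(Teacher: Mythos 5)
Your proposal is correct and follows essentially the same route as the paper: split $\psi:=u_k-\Pi_k u_\infty$ into $\Pi_k\psi=\Pi_k e_k$ and $\psi-\Pi_k\psi=u_k-\Pi_k u_k$, use coercivity~\eqref{eq:coercive}, Galerkin orthogonality \eqref{ipdg} on the $\V_k$ side and \eqref{eq:u8} on the $\V_\infty$ side, and then bound the resulting term via continuity of $\bilin[\infty]{\cdot}{\cdot}$ together with Lemma~\ref{lem:Pik}\eqref{Pik:5a} and Lemma~\ref{lem:PikStab}; the paper just carries out the identity $(I)=\bilin[\infty]{u_\infty-\Pi_ku_\infty}{\Pi_k\psi}$ term by term rather than asserting $\bilin[\infty]{\Pi_ku_\infty}{\Pi_k\psi}=\bilin[k]{\Pi_ku_\infty}{\Pi_k\psi}$ up front. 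One small imprecision in your bookkeeping: a face $S\in\sides^+\setminus\sides_k^+$ need not lie in the interior of a single element of $\gridk$ (it can be a sub-face of an old face shared by two refined elements of $\gridk\setminus\gridk^+$); what saves the argument is that $\Pi_k\psi$ and $\Pi_ku_\infty$ are continuous on the closed set $\overline{\Omega\setminus\Omega_k^+}$ and have vanishing jump across $\partial(\Omega\setminus\Omega_k^+)$ by Lemma~\ref{lem:Pik}\eqref{Pik:3}, which you do invoke, so the conclusion stands.
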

\begin{proof}
  Assume that $u_k\neq\Pi_ku_\infty\in \Vk\cap\V_\infty$ and set $\psi=u_k-\Pi_ku_\infty$. Then, we have from~\eqref{eq:coercive} that
  \begin{align*}
    \alpha\enorm[k]{u_k-\Pi_ku_\infty}^2&\le\bilin[k]{u_k-\Pi_ku_\infty}{\psi}=\scp{f}{\psi}-\bilin[k]{\Pi_ku_\infty}{\psi}
    \\
    &=\scp{f}{\Pi_k\psi}+\scp{f}{\psi-\Pi_k\psi}-\bilin[k]{\Pi_ku_\infty}{\psi}
      \\
    &=\big(\bilin[\infty]{u_\infty}{\Pi_k\psi}-\bilin[k]{\Pi_ku_\infty}{\Pi_k\psi}\big)
      \\
    &\quad
      +\big(\scp{f}{\psi-\Pi_k\psi}-\bilin[k]{\Pi_ku_\infty}{\psi-\Pi_k\psi}\big)\equiv (I)+(II),
  \end{align*}
 using that $\Pi_k\psi\in \V_k\cap\V_\infty$ from Lemma~\ref{lem:Pik}\eqref{Pik:5a}. 
  For $(I)$, we have, respectively, 
  \begin{align*}
     (I) &=\int_{\Omega}\nablaG u_\infty\cdot\nablaG \Pi_k\psi\,\ud x
      -\int_{\sides^+}\big(\mean{\nabla
        u_\infty}\cdot\jump{\Pi_k\psi}+\theta \mean{\nabla
        \Pi_k\psi}\cdot\jump{u_\infty}\big)\ds
      \\
      &\quad
      +\int_{\mathring\sides^+}
       \big(\vec{\beta}\cdot\jump{u_\infty}\jump{\nabla \Pi_k\psi}+\jump{\nabla
         u_\infty}\vec{\beta}\cdot\jump{\Pi_k\psi}\big)\ds
       \\
       &\quad + \int_\Omega \gamma
         \big(\riftG[\infty](\jump{u_\infty})+\liftG[\infty](\vec{\beta}\cdot\jump{u_\infty})\big)\cdot
         \big(\riftG[\infty](\jump{\Pi_k\psi})+\liftG[\infty](\vec{\beta}\cdot\jump{\Pi_k\psi})\big)\dx 
      \\
      &\quad+\int_{\sides^+}\frac{\sigma}{\hG[+]}\jump{u_\infty}\cdot\jump{\Pi_k\psi}\,\ud
      s
      \\
      &\quad-\int_{\Omega}\nablaG \Pi_ku_\infty\cdot\nablaG \Pi_k\psi\,\ud x
      +\int_{\sides_k}\big(\mean{\nabla
        \Pi_ku_\infty}\cdot\jump{\Pi_k\psi}+\theta \mean{\nabla
        \Pi_k\psi}\cdot\jump{\Pi_ku_\infty}\big)\ds
      \\
       &\quad -\int_{\mathring\sides^+}
       \big(\vec{\beta}\cdot\jump{\Pi_ku_\infty}\jump{\nabla \Pi_k\psi}+\jump{\nabla
         \Pi_ku_\infty}\vec{\beta}\cdot\jump{\Pi_k\psi}\big)\ds
       \\
       &\quad - \int_\Omega \gamma
         \big(\riftG[k](\jump{\Pi_ku_\infty})+\liftG[k](\vec{\beta}\cdot\jump{\Pi_ku_\infty})\big)\cdot
         \big(\riftG[k](\jump{\Pi_k\psi})+\liftG[k](\vec{\beta}\cdot\jump{\Pi_k\psi})\big)\dx 
      \\
      &\quad-\int_{\sides^+}\frac{\sigma}{\hG[k]}\jump{\Pi_ku_\infty}\cdot\jump{\Pi_k\psi}\,\ud
      s
      \\
      &=\int_{\Omega}\nablaG (u_\infty-\Pi_ku_\infty)\cdot\nablaG \Pi_k\psi\,\ud x
      \\
      &\quad -\int_{\sides^+_k}\mean{\nabla
        (u_\infty-\Pi_ku_\infty)}\cdot\jump{\Pi_k\psi}
      \ds-\theta\int_{\sides^+}\mean{\nabla
        \Pi_k\psi}\cdot\jump{u_\infty-\Pi_ku_\infty}\ds
       \\
      &\quad +\int_{\mathring\sides^+}
       \big(\vec{\beta}\cdot\jump{u_\infty-\Pi_ku_\infty}\jump{\nabla
         \Pi_k\psi}+\jump{\nabla u_\infty-\nabla
         \Pi_ku_\infty}\vec{\beta}\cdot\jump{\Pi_k\psi}\big)\ds
      \\
      &\quad+ \int_\Omega \gamma
        \big(\riftG[\infty](\jump{u_\infty-\Pi_ku_\infty})+\liftG[\infty](\vec{\beta}\cdot\jump{u_\infty-\Pi_ku_\infty})\big)
        \\
    &\qquad\cdot \big(\riftG[\infty](\jump{\Pi_k\psi})+\liftG[\infty](\vec{\beta}\cdot\jump{\Pi_k\psi})\big)\dx
    \\
      &\quad+\int_{\sides^+_k}\frac{\sigma}{\hG[k]}\jump{u_\infty-\Pi_ku_\infty}
      \cdot\jump{\Pi_k\psi}\,\ud
      s
      \\
      &\Cleq \enorm[\infty]{u_\infty
        -\Pi_ku_\infty}\enorm[\infty]{\Pi_k\psi}=\enorm[\infty]{u_\infty
        -\Pi_ku_\infty}\enorm[k]{\Pi_k\psi}
      \\
      &\Cleq \enorm[\infty]{u_\infty
        -\Pi_ku_\infty}\enorm[k]{u_k-\Pi_ku_\infty};
  \end{align*}
  here we used that $\Pi_ku_\infty,\Pi_k\psi\in\Vk\cap\V_\infty$, $h_\infty=h_k$
  on $\sides_k^+$ and that $\Pi_ku_\infty$
  and $\Pi_k\psi$ are continuous on $\Omega\setminus\Omega^+_k$, i.e.,
  $\jump{\Pi_ku_\infty}
  =\jump{\Pi_k\psi}=0$ on $\sides^+\setminus\sides_k^+$, 
  which follows from
  Lemma~\ref{lem:Pik}. 
  Note that this and $\jump{\Pi_ku_\infty}
  =\jump{\Pi_k\psi}=0$ on 
  $\partial(\Omega\setminus\Omega_k^+)$ from Lemma~\ref{lem:Pik}
  also implies that
  $\liftG[k](\Pi_k\psi)=\liftG[\infty](\Pi_k\psi)$ and
  $\liftG[k](\Pi_ku_\infty)=\liftG[\infty](\Pi_ku_\infty)$ as well as
  the corresponding relations between $\riftG[k]$ and $\riftG[\infty]$;
  compare with~\eqref{eq:lift-inf}. Thus, the above estimate follows
  from the Cauchy-Schwarz inequality, application of inverse inequalities in
  conjunction with the stability of the lifting
  operators~\eqref{eq:lift-stab},
  and Lemma~\ref{lem:PikStab}.

  Consequently, triangle
  inequality and the above imply
  \begin{align*}
    \enorm[k]{u_\infty-\uk}&\leq
    \enorm[k]{u_\infty-\Pi_ku_\infty}+\enorm[k]{\uk-\Pi_ku_\infty}
    \\
    &\Cleq
    \enorm[k]{u_\infty-\Pi_ku_\infty}+\enorm[\infty]{u_\infty-\Pi_ku_\infty}
      \\
    &\qquad+
    \frac{\scp{f}{\psi-\Pi_k\psi}-\bilin[k]{\Pi_ku_\infty}{\psi-\Pi_k\psi}}{\enorm[k]{u_k-\Pi_ku_\infty}}. 
  \end{align*}
  Thanks to $\enorm[k]{u_\infty-\Pi_ku_\infty}\le\enorm[\infty]{u_\infty-\Pi_ku_\infty}$,
  this proves the assertion.
\end{proof}

The properties of  the quasi-interpolation~\eqref{df:Pik} allow for the
consistency term in Lemma~\ref{lem:conv1} to be
bounded
by the a posteriori indicators of essentially the elements, which 
will experience further refinements. 

\begin{lem}\label{lem:conv2}
  Let $u_\infty\in\V_\infty$ be the solution of~\eqref{eq:u8} and $u_k\in\Vk$ be the
  \DGFEM approximation from \eqref{ipdg} on $\gridk$ for some $k\in\N$. Then, we have 
  \begin{align*}
    \frac{\scp{f}{u_k-\Pi_ku_k}-\bilin[k]{\Pi_ku_\infty}{u_k-\Pi_ku_k}}{\enorm[k]{u_k-\Pi_ku_\infty}}
    \Cleq 
    \Big(\sum_{\elm\in\grid_k\setminus\grid_k^{3+}}\est_k(\Pi_ku_\infty,\elm)^2\Big)^{1/2},
  \end{align*}
  where $\gridk^{3+}:=\{\elm\in\gridk:\neighk(\elm)\subset \gridk^{++}\}$.
\end{lem}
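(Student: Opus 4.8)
The plan is a medius-type (hybrid a posteriori/a priori) argument exploiting the locality of $\Pi_k$. Abbreviate $v_k:=\Pi_ku_\infty\in\V_k\cap\V_\infty$, $\psi:=u_k-\Pi_ku_\infty$ and $w:=u_k-\Pi_ku_k$, so that the numerator in question is $\scp{f}{w}-\bilin[k]{v_k}{w}$. Since $\Pi_k$ is a linear projection onto $\V_k$ (Lemma~\ref{lem:Pik}\eqref{Pik:1},\eqref{Pik:2}) and $\Pi_kv_k=v_k$, we have the identity $w=\psi-\Pi_k\psi\in\V_k$; this is precisely what will later produce $\enorm[k]{u_k-\Pi_ku_\infty}$ rather than $\enorm[k]{u_k}$. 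Moreover, by Lemma~\ref{lem:Pik}\eqref{Pik:4} we have $\Pi_ku_k|_\elm=u_k|_\elm$ for every $\elm\in\gridk^{++}$, hence $w|_\elm\equiv0$ there, so $w$ (and thus $\jump{w}$, $\mean{w}$ and $\nabla w$) is supported in $\overline{\Omega(\gridk^-)}$ together with those faces $S\in\sides_k$ having an adjacent element in $\gridk^-=\gridk\setminus\gridk^{++}$.

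First I would write the element-wise residual representation of the numerator. Integrating by parts element by element in $\int_{\gridk}\nabla v_k\cdot\nabla w\dx$ and regrouping the face contributions (the $\mean{\nabla v_k}\cdot\jump{w}$-terms cancel, also on $\partial\Omega$), one obtains
\[
\scp{f}{w}-\bilin[k]{v_k}{w}=\sum_{\elm\in\gridk}\int_\elm(f+\Delta v_k)w\dx-\int_{\mathring\sides_k}\jump{\nabla v_k}\mean{w}\ds+\theta\int_{\sides_k}\mean{\nabla w}\cdot\jump{v_k}\ds+(\cdots),
\]
where $(\cdots)$ gathers the $\vec{\beta}$-, lifting- and $\sigma$-penalty terms of $\bilin[k]{\cdot}{\cdot}$, each carrying a factor $\jump{w}$ or $\jump{\nabla w}$. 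By the support property, the volume sum effectively runs only over $\elm\in\gridk^-$, and every surface integral is effectively restricted to faces $S$ touching $\gridk^-$; for any such $\elm$ or $S$ the adjacent elements lie in $\neighk(\gridk^-)$, and by symmetry of the neighbour relation together with the definition $\gridk^{3+}=\{\elm:\neighk(\elm)\subset\gridk^{++}\}$ one checks $\neighk(\gridk^-)\subset\gridk\setminus\gridk^{3+}$ (if $\elm$ meets some $\elm'\in\gridk^-$, then $\elm'\in\neighk(\elm)\setminus\gridk^{++}$, so $\elm\notin\gridk^{3+}$).

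Next I would estimate the individual terms by element- or face-wise Cauchy--Schwarz together with scaled trace and inverse estimates (admissible since $w,v_k\in\V_k$ are element-wise polynomials): $\int_\elm(f+\Delta v_k)w\le\|h_k(f+\Delta v_k)\|_\elm\|h_k^{-1}w\|_\elm$; $\int_S\jump{\nabla v_k}\mean{w}\le\|h_k^{1/2}\jump{\nabla v_k}\|_S\|h_k^{-1/2}\mean{w}\|_S$ with $\|h_k^{-1/2}\mean{w}\|_S\Cleq\|h_k^{-1}w\|_{\omegak(S)}$; $\int_S\mean{\nabla w}\cdot\jump{v_k}\le\|h_k^{1/2}\mean{\nabla w}\|_S\|h_k^{-1/2}\jump{v_k}\|_S$ with $\|h_k^{1/2}\mean{\nabla w}\|_S\Cleq\|\nabla w\|_{\omegak(S)}$; the $\sigma$-penalty term and the $\vec{\beta}$/lifting terms analogously, the latter using~\eqref{eq:liftGstab}. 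Since every factor built from $v_k$ is a summand of $\est_k(v_k,\elm)^2=\est_k(\Pi_ku_\infty,\elm)^2$ for some $\elm\in\gridk\setminus\gridk^{3+}$ (for \LDG with $\sigma=0$ the $\sigma$-penalty term vanishes but $\jump{v_k}$ still enters through the liftings; there one uses the locally equivalent estimator with $\bar\sigma$ in place of $\sigma$, cf.\ the Remark after Proposition~\ref{prop:lower}), summation and the finite overlap of the patches $\{\omegak(\elm)\}$ yield
\[
\scp{f}{w}-\bilin[k]{v_k}{w}\Cleq\Big(\sum_{\elm\in\gridk\setminus\gridk^{3+}}\est_k(\Pi_ku_\infty,\elm)^2\Big)^{1/2}\Big(\sum_{\elm\in\gridk\setminus\gridk^{3+}}\|\nabla w\|_\elm^2+\|h_k^{-1}w\|_\elm^2+\|h_k^{-1/2}\jump{w}\|_{\partial\elm}^2\Big)^{1/2}.
\]

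Finally, since $w=\psi-\Pi_k\psi$ with $\psi\in\V_k$, the interpolation estimate of Corollary~\ref{cor:PikIpol} (case $\ell=k$) bounds the $\elm$-th summand of the second factor by $\Cleq\|\nabla\psi\|_{\omegak(\elm)}^2+\sum_{S\in\sides_k,\,S\subset\omegak(\elm)}\|h_k^{-1/2}\jump{\psi}\|_S^2$; summing over $\elm\in\gridk\setminus\gridk^{3+}$ and using finite overlap together with $\bar\sigma\ge1$, the second factor is $\Cleq\enorm[k]{\psi}=\enorm[k]{u_k-\Pi_ku_\infty}$, and dividing through by $\enorm[k]{u_k-\Pi_ku_\infty}$ gives the claim. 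The main obstacle I anticipate is the bookkeeping of the second paragraph: deciding precisely which volume and surface contributions survive the support restriction, and confirming that the associated elements all lie in $\gridk\setminus\gridk^{3+}$ -- i.e.\ that $\gridk^{3+}$ is exactly the two-layer interior that must be excluded. The remaining ingredients are the routine estimates of medius analysis as in \cite{Gudi:10}.
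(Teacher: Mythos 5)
Your argument is correct and is essentially the paper's own proof, modulo organization: you integrate by parts element-wise, use Lemma~\ref{lem:Pik}\eqref{Pik:4} to conclude that $w=u_k-\Pi_ku_k=\psi-\Pi_k\psi$ vanishes on $\gridk^{++}$ (and $\jump{w}$ on $\sides_k^{++}$) so that only $\gridk\setminus\gridk^{3+}$-terms survive, apply term-wise Cauchy--Schwarz with trace/inverse estimates and the lifting bound \eqref{eq:liftGstab}, and finally invoke Corollary~\ref{cor:PikIpol} to absorb the $w$-factor into $\enorm[k]{u_k-\Pi_ku_\infty}$. The only cosmetic difference from the paper is that the paper first records the restricted identity \eqref{eq:17} (also exploiting $\jump{\Pi_ku_\infty}|_S=0$ for $S\notin\sides_k^+$) and then estimates term by term, whereas you restrict supports and estimate in one pass; the ingredients, including the identification of $\gridk^{3+}$, are the same.
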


\begin{proof}
  Let $v_k:=\Pi_ku_\infty$ and $\phi:=u_k-\Pi_k
  u_k=u_k-\Pi_ku_\infty-\Pi_k(u_k-\Pi_ku_\infty)$. Then, using
  integration by parts, we have
  \begin{align*}
    \scp{f}{\phi}&-\bilin[k]{v_k}{\phi}
    \\
    &=\int_{\gridk}(f+\Delta
                                        v_k)\phi\dx
                                        -\int_{\sides_k}\jump{\nabla
                                        v_k}\mean{\phi}\ds
      +\int_{\sides_k}\theta \mean{\nabla \phi}\jump{v_k}\ds
      \\
      &\quad -\int_{\mathring\sides_k}
       \big(\vec{\beta}\cdot\jump{v_k}\jump{\nabla \phi}+\jump{\nabla
         v_k}\vec{\beta}\cdot\jump{\phi}\big)\ds
       \\
    &\quad - \int_\Omega \gamma
      \big(\riftG[k](\jump{v_k})+\liftG[k](\vec{\beta}\cdot\jump{v_k})\big)\cdot
      \big(\riftG[k](\jump{\phi})+\liftG[k](\vec{\beta}\cdot\jump{\phi})\big)\dx
    \\
    &\quad-\sigma\int_{\sides_k}h_k^{-1}\jump{v_k}\jump{\phi}\ds.
  \end{align*}
  Thanks to properties of $\Pi_k$ (see Lemma~\ref{lem:Pik}), we have
  that $\jump{v_k}|_\side \equiv0$ for
  $S\in\sides_k\setminus\sides_k^+$, $\jump{v_k}|_{\Omega\setminus\Omega_k^+} \equiv0$,
  $\phi|_{\elm}\equiv0$ for $\elm\in\gridk^{++}$, and $\jump{\phi}|_{S}\equiv0$ for $S\in\sides_k^{++}$. Therefore, we have 
  \begin{align}\label{eq:17}
    \begin{aligned}
      \scp{f}{\phi}&-\bilin[k]{v_k}{\phi}
      \\
      &=\int_{\gridk\setminus\gridk^{++}}(f+\Delta v_k)\phi\dx
      -\int_{\sides_k\setminus\sides_k^{++}}\jump{\nabla
        v_k}\mean{\phi}\ds
      \\
      &\quad+\theta\int_{\sides_k^+}\mean{\nabla \phi}\jump{v_k}\ds
      \\
      &\quad -\int_{\mathring\sides_k^+}
        \vec{\beta}\cdot\jump{v_k}\jump{\nabla
          \phi}\ds-\int_{\mathring\sides_k\setminus\sides_k^{++}}
        \jump{\nabla v_k}\vec{\beta}\cdot\jump{\phi}\ds
      \\
      &\quad - \int_\Omega \gamma
        \big(\riftG[k](\jump{v_k})+\liftG[k](\vec{\beta}\cdot\jump{v_k})\big)\cdot
        \big(\riftG[k](\jump{\phi})+\liftG[k](\vec{\beta}\cdot\jump{\phi})\big)\dx
      \
      \\
      &\quad-\sigma\int_{\sides_k^+\setminus\sides_k^{++}}h_k^{-1}\jump{v_k}\jump{\phi}\ds
    \end{aligned}.
\end{align}    
The last term on the right-hand side of \eqref{eq:17} can be estimated using Cauchy-Schwarz' inequality;
  for the first two terms we use the interpolation estimates from
  Corollary~\ref{cor:PikIpol} for 
    $\phi=\psi-\Pi_k\psi$ with $\psi=u_k-\Pi_ku_\infty\in\V_k$ as to
    obtain 
    \begin{multline*}
      \int_{\gridk\setminus\gridk^{++}}(f+\Delta
                                        v_k)\phi\dx
                                        -\int_{\sides_k\setminus\sides_k^{++}}\jump{\nabla
                                        v_k}\mean{\phi}\ds
                                      \\
                                      \Cleq \Bigg[\Big(\int_{\gridk\setminus\gridk^{++}}h_k^2|f+\Delta
      v_k|^2\dx\Big)^{1/2}
      + \Big(\int_{\sides_k\setminus\sides_k^{++}} h_k\jump{\nabla
      v_k}^2\ds\Big)^{1/2}\Bigg]\enorm[k]{u_k-\Pi_ku_\infty}. 
    \end{multline*}
    Moreover, from $\phi|_\elm\equiv0$, $\elm\in\gridk^{++}$, we have
    that $\phi|_{\omegak(S)}\equiv0$ and thus
      $\mean{\nabla\phi}|_\side\equiv0$ for all 
    $\side\in\sides_k^{3+}=\sides(\gridk^{3+})$. Therefore, by
    standard trace inequalities, inverse estimates and
    Corollary~\ref{cor:PikIpol}, we have  that
    \begin{align*}
      \int_{\sides_k^+}\mean{\nabla \phi}\jump{v_k}\ds&=
                                                        \int_{\sides_k^+\setminus\sides_k^{3+}}\mean{\nabla
                                                        \phi}\jump{v_k}\ds
                                                        \Cleq
                                                        \Big(\int_{\sides_k^+\setminus\sides_k^{3+}}
                                                        h_k^{-1}\jump{v_k}^2\ds\Big)^{1/2}   
                                                        \enorm[k]{\phi}.
     \end{align*}
     A similar argument yields 
     \begin{align*}
       \int_{\mathring\sides_k^+}
       \vec{\beta}\cdot\jump{v_k}\jump{\nabla
        \phi}\ds&=\int_{\mathring\sides_k^+\setminus\sides_k^{3+}}
       \vec{\beta}\cdot\jump{v_k}\jump{\nabla
        \phi}\ds 
       \\
       &\Cleq \abs{\vec{\beta}}
                                                        \Big(\int_{\mathring\sides_k^+\setminus\sides_k^{3+}}
                                                        h_k^{-1}\jump{v_k}^2\ds\Big)^{1/2}   
                                                        \enorm[k]{\phi}.
     \end{align*}
     Finally we have with~\eqref{eq:liftGstab} and the local support
     of the local liftings, that
     \begin{align*}
       \int_\Omega 
      \riftG[k](\jump{v_k})\cdot
      \riftG[k](\jump{\phi})\dx &=\int_\Omega
       \big(\sum_{\side\in \sides_k^+
       }\riftS[k](\jump{v_k})\big)
       \cdot\big(\sum_{\side\in\sides_k\setminus\sides_k^{++} 
       }\riftS[k](\jump{\phi})\big)\dx
                                  \\
       &= \int_{\gridk^+\setminus\gridk^{++}}\riftG[k](\jump{v_k})\cdot
      \riftG[k](\jump{\phi})\dx 
         \\
       &\Cleq
         \big(\int_{\sides_k^{+}\setminus\sides_k^{3+}}\hk^{-1}\jump{v_k}^2\ds
         \Big)^{1/2}  \enorm[k]{\phi}.
     \end{align*}
     Similar bounds hold for the remaining terms in~\eqref{eq:17}.
     Combining the above observations proves the desired assertion.
\end{proof}

In order to conclude convergence of the sequence of discrete discontinuous
Galerkin approximations from Lemma~\ref{lem:conv2}, we need to control
the error estimator. To this end, we shall use Verf\"urth's bubble
function technique. 

Let $n\in\N$, such that 
$n$ uniform refinements of an element ensure that the element as well
as  each of its sides have at least one interior node.
We specify the elements in $\gridk$, which neighbourhood is eventually uniformly
refined $n$ times by
\begin{align*}
  \grid_k^0\definedas \big\{\elm\in\gridk\colon& \exists
  \ell=\ell(\elm)\ge k+n~\text{such that} \\
  &\text{all $\elm'\in\neighk(\elm)$ are $n$ 
times uniformly refined in $\grid_\ell$} \big\}
\end{align*}
This
guarantees that suitable discrete interior and side bubble
functions are available in $\V_\infty$ for all $\elm\in\gridk^0$;
(compare also with \cite{Doerfler:96}, \cite{MoNoSi:00} and
\cite{MorinSiebertVeeser:08}). 
We define $\Omega_k^0:=\Omega(\gridk^0)
\subset\Omega_k^-$. Introducing $\Omega_k^\star=\Omega(\gridk^\star)$ with
$\gridk^\star\definedas\gridk\setminus(\gridk^{++}\cup\gridk^0)$,
we have from 
\cite[(4.15)]{MorinSiebertVeeser:08} that
\begin{align}
  \label{eq:MSV(4.15)}
  |\Omega(\gridk^\star)|
  \to 0 \qquad\text{as}~k\to\infty.
\end{align}

\begin{prop}\label{prop:lower8}
  Let $u_\infty$ be the solution of~\eqref{eq:u8}. 
  Then, for every $\elm\in\gridk^0$ and $v\in\Vk$, $k\in\N$, we have
  \begin{multline*}
    \int_{\elm}h_k^2|f+\Delta
      v|^2\dx+\int_{\partial\elm\cap\Omega} h_k\jump{\nablaG
        v}^2\ds\\
      \begin{split}
        &\Cleq \norm[\omegak(\elm)]{\nablaG (u_\infty-v)}^2
        +\int_{\{S\in\sides^+:S\subset\omegak(\elm)\}}h_+^{-1}\jump{u_\infty-v}^2\ds
        \\
        &\qquad+\osc(\neighk(\elm),f)^2;
      \end{split}
  \end{multline*}
  in particular, we also have 
  \begin{multline*}
    \sum_{\elm\in\grid_k^0}\int_{\elm}h_k^2|f+\Delta
      v|^2\dx+\int_{\partial\elm\cap\Omega} h_k\jump{\nablaG
        v}^2\ds
    \\\Cleq \enorm[\infty]{u_\infty-v}^2+\sum_{\elm\in\gridk^0}\sum_{\elm'\in\omega_k(\elm)}\osc(\elm',f)^2.
  \end{multline*}
  Note that since $v\in\Vk\not\subset\V_\infty$ in general, the above terms may be equal to infinity.
\end{prop}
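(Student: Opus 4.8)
The plan is to run Verf\"urth's bubble--function technique \cite{Verfuerth:96,Verfuerth:2013} with the one essential modification that every test (bubble) function be chosen in $\V_\infty$, so that the defining relation $\scp{f}{w}=\bilin[\infty]{u_\infty}{w}$ of Corollary~\ref{cor:u8} may be used. This is exactly what membership of $\elm$ in $\gridk^0$ buys: by definition each $\elm'\in\neighk(\elm)$ is uniformly refined $n$ times in some $\grid_\ell$, and $n$ was fixed so that the $n$--fold uniform refinement of a simplex and of each of its faces carries interior Lagrange nodes; hence there is a continuous interior bubble $b_\elm$, piecewise polynomial on $\grid_\ell$ and supported in $\elm$, and for each $S\subset\partial\elm$ a continuous face bubble $b_S$ supported in $\omegak(S)$. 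The first thing to verify is that $w:=b_\elm q$ (resp.\ $b_S q$) with $q$ a suitable polynomial indeed lies in $\V_\infty$: it belongs to $\V_\ell\subset\V_j$ for all $j\ge\ell$ and is continuous, so the eventually constant sequence $v_j:=w$ witnesses the definition of $\V_\infty$; moreover $w|_{\elm''}\in\P_r$ for every $\elm''\in\grid^+$, because the newest--vertex--bisection structure forbids a $\grid^+$ element from being a descendant of $\elm$ of bisection depth strictly less than $n$ (it would have to be bisected when $\elm$ is uniformly $n$--refined), so every $\grid^+$ element contained in $\elm$ lies inside one depth--$n$ cell of the uniform refinement, on which $w$ is a single polynomial.

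For the element residual on $\elm\in\gridk^0$ I would choose $\overline f\in\P_{r-1}(\elm)$ realising $\osc(\elm,f)=\norm[\elm]{h_k(f-\overline f)}$, set $w:=b_\elm(\overline f+\Delta v)\in\V_\infty$, and expand $\scp{f}{w}=\bilin[\infty]{u_\infty}{w}$. Since $\jump{w}\equiv0$, the penalty term, both lifting terms and the $\vec{\beta}\cdot\jump{w}$ contribution in $\bilin[\infty]{u_\infty}{w}$ vanish, leaving $\int_\Omega\nablaG u_\infty\cdot\nablaG w\dx$ together with $-\theta\int_{\sides^+}\mean{\nabla w}\cdot\jump{u_\infty}\ds$ and $\int_{\mathring\sides^+}\jump{\nabla w}\,\vec{\beta}\cdot\jump{u_\infty}\ds$, in which only faces $S\in\sides^+$ with $S\subset\elm$ survive (this is precisely where the non--conformity of $\V_\infty$ on $\Omega^+$ enters). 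Integrating $\int_\elm(\overline f+\Delta v)\,w\dx$ by parts elementwise — the boundary term disappears since $w|_{\partial\elm}=0$ and $\nabla v$ is continuous inside $\elm$ — and substituting the above gives
\begin{align*}
\int_\elm(\overline f+\Delta v)\,w\dx&=\int_\elm\nablaG(u_\infty-v)\cdot\nabla w\dx-\int_\elm(f-\overline f)\,w\dx\\
&\quad-\theta\int_{\{S\in\sides^+:S\subset\elm\}}\mean{\nabla w}\cdot\jump{u_\infty}\ds+\int_{\{S\in\mathring\sides^+:S\subset\elm\}}\jump{\nabla w}\,\vec{\beta}\cdot\jump{u_\infty}\ds.
\end{align*}
On any such $S\subset\elm$ one has $\jump{v}|_S\equiv0$, because $v\in\Vk$ restricts to a single polynomial on $\elm$, hence $\jump{u_\infty}|_S=\jump{u_\infty-v}|_S$ and these faces lie in $\{S\in\sides^+:S\subset\omegak(\elm)\}$. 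Estimating every term by Cauchy--Schwarz, using the discrete trace and inverse inequalities for the piecewise--polynomial $w$ (with constants depending on $n$, hence on $d$ and $r$) and the bubble equivalences $\norm[\elm]{\overline f+\Delta v}^2\Cleq\int_\elm b_\elm(\overline f+\Delta v)^2\dx$ and $\norm[\elm]{w}+h_k\norm[\elm]{\nabla w}\Cleq\norm[\elm]{\overline f+\Delta v}$, then dividing and adding back $\norm[\elm]{h_k(f-\overline f)}=\osc(\elm,f)$, I obtain
\begin{align*}
\int_\elm h_k^2|f+\Delta v|^2\dx\Cleq\norm[\elm]{\nablaG(u_\infty-v)}^2+\int_{\{S\in\sides^+:S\subset\elm\}}h_+^{-1}\jump{u_\infty-v}^2\ds+\osc(\elm,f)^2.
\end{align*}
The same argument works verbatim for each $\elm'\in\neighk(\elm)$ (also $n$--refined), so the element bound holds on all of $\omegak(\elm)$.

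For the jump residual on an interior face $S\subset\partial\elm\cap\Omega$, $\elm\in\gridk^0$, I would take $w:=b_S\,\widetilde{\jump{\nabla v}}$ with $\widetilde{\jump{\nabla v}}$ a polynomial extension of $\jump{\nabla v}|_S$ to $\omegak(S)$, again in $\V_\infty$, and use $\scp{f}{w}=\bilin[\infty]{u_\infty}{w}$ together with elementwise integration by parts over the two elements of $\omegak(S)$: the interior boundary contributions collapse to $\int_S\jump{\nabla v}\,w\ds$, and one is left with $\int_S\jump{\nabla v}\,w\ds$ equal to $-\int_{\omegak(S)}\nablaG(u_\infty-v)\cdot\nabla w\dx+\int_{\omegak(S)}(f+\Delta v)\,w\dx$ minus the interface terms on $\sides^+$--faces inside $\omegak(S)$, the latter again being contributions of $\jump{u_\infty-v}$. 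Cauchy--Schwarz, the trace/inverse inequalities, the face bubble equivalence $\norm[S]{\jump{\nabla v}}^2\Cleq\int_S b_S\jump{\nabla v}^2\ds$ and the element bound just established on $\omegak(S)\subset\omegak(\elm)$ then give $\int_S h_k\jump{\nabla v}^2\ds\Cleq\norm[\omegak(\elm)]{\nablaG(u_\infty-v)}^2+\int_{\{S'\in\sides^+:S'\subset\omegak(\elm)\}}h_+^{-1}\jump{u_\infty-v}^2\ds+\osc(\neighk(\elm),f)^2$. Summing the element and face bounds over $S\subset\partial\elm\cap\Omega$ proves the first (local) assertion. For the second, I sum over $\elm\in\gridk^0$, invoke the finite overlap of the patches $\omegak(\elm)$ and of the collections $\{S\in\sides^+:S\subset\omegak(\elm)\}$, and use $\norm{\nablaG(u_\infty-v)}^2\le\enorm[\infty]{u_\infty-v}^2$ and $\int_{\sides^+}h_+^{-1}\jump{u_\infty-v}^2\ds\le\enorm[\infty]{u_\infty-v}^2$ (both understood in $[0,\infty]$, since $v\notin\V_\infty$ in general); this yields the global estimate.

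The step I expect to be the main obstacle is the interplay of two points that are absent from the conforming analysis of \cite{MorinSiebertVeeser:08} and from \cite{Gudi:10}: (i) certifying that the interior and face bubbles actually belong to $\V_\infty$, which is what forces the use of the set $\gridk^0$ and the bisection--depth argument showing the bubbles are polynomial on every $\grid^+$ element and continuous; and (ii) the interface terms $\int_{\sides^+}\mean{\nabla w}\cdot\jump{u_\infty}\ds$ and $\int_{\mathring\sides^+}\jump{\nabla w}\,\vec{\beta}\cdot\jump{u_\infty}\ds$ produced by $\bilin[\infty]{u_\infty}{w}$ on $\grid^+$--faces lying inside a $\gridk^0$--element — since $u_\infty$ is only piecewise polynomial on $\Omega^+$, not $H^1$--conforming, these do not vanish, and the correct device is to rewrite $\jump{u_\infty}=\jump{u_\infty-v}$ there (licit because $v$ is a single polynomial on each element of $\gridk$) so that they are absorbed into the jump term already on the right--hand side. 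Everything else is routine scaling.
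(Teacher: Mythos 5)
Your proof is correct and follows essentially the same route as the paper's: discrete bubble functions in $\V_\ell\cap C(\Omega)\subset\V_\infty$ (available by the definition of $\gridk^0$), the identity $\scp{f}{w}=\bilin[\infty]{u_\infty}{w}$ from Corollary~\ref{cor:u8}, elementwise integration by parts, and absorption of the resulting $\sides^+$-jump terms into $\jump{u_\infty-v}$ exactly as in the paper's sketch. One small point worth sharpening: your claim that $\jump{v}|_S\equiv 0$ for \emph{every} $S\in\sides^+$ with $S\subset\elm$ is only true for faces strictly interior to $\elm$; on $\sides^+$-subfaces of $\partial\elm$ (and likewise of $\partial\omegak(S)$ in the face-residual step) one has $\jump{v}\ne 0$, so the identity $\jump{u_\infty}=\jump{u_\infty-v}$ would fail there --- the standard remedy, implicit in the paper's choice of bubble, is to take $\phi_\elm$ (resp.\ $\phi_S$) with support strictly inside $\elm$ (resp.\ $\omegak(S)$), so that $\nabla w$ vanishes on the relevant boundary and those faces drop out of $\bilin[\infty]{u_\infty}{w}$ altogether.
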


\begin{proof}
  The proof follows from standard techniques; compare
  e.g. \cite{KarakashianPascal:03,BonitoNochetto:10} by replacing
  Verf\"urth's bubble functions by their discrete counterparts. 
  However, in order to keep the presentation self-contained, we
  provide a sketch of the proof. Let $\elm\in \gridk^0$, then, thanks
  to the  definition of $\gridk^0$, there exists some $\ell>k$ such that there exists a
  piecewise affine discrete bubble function
  $\phi_\elm\in\V_\ell\cap C(\Omega)$ 
  satisfying $ \phi_\elm\in H_0^1(\elm),$ and
  \begin{align}\label{eq:phiE}
    h_\elm^{{d}}\norm[L^\infty(\elm)]{\nabla q\phi_\elm}^2\Cleq\norm[\elm]{\nabla q\phi_\elm}^2\Cleq
    h_\elm^{-2}\norm[\elm]{q}^2\quad\text{for all}~q\in\P_{r-1} (E);
  \end{align}
  compare \cite{Doerfler:96,MorinSiebertVeeser:08}.
  Let $f_\elm\in\P_{r-1}(E)$ an arbitrary polynomial. Observing that
  $(f_\elm+\Delta v)\phi_\elm\in C(\Omega)$ and thus does not jump across
  sides, we have by
  equivalence of norms on finite dimensional spaces and a scaled trace inequality, that 
  \begin{multline*}
    \int_\elm |f_\elm+\Delta v|^2\dx
    \\
    \begin{aligned}
      &\Cleq \int_\elm (f_\elm+\Delta v) (f_\elm+\Delta v)\phi_\elm\dx
      \\
      &= \bilin[\infty]{u_\infty-v}{(f_\elm+\Delta v)\phi_\elm}-\int_\elm
      (f-f_\elm) (f_\elm+\Delta v)\phi_\elm\dx
      \\
      &\Cleq 
      \norm[\elm]{\nablaG (u_\infty - v)}
      \norm[\elm]{\nabla(f_\elm+\Delta v)\phi_\elm}
      - \int_{\sides^+}\jump{u_\infty-v}\mean{\nabla(f_\elm+\Delta v)\phi_\elm}\ds\\
      &\quad+\norm[\elm]{f-f_\elm}\norm[\elm]{(f_\elm+\Delta v)\phi_\elm}.
    \end{aligned}
  \end{multline*}
  From~\eqref{eq:phiE} and standard inverse estimates,  we conclude that 
  \begin{multline*}
    \abs{\int_{\sides^+}\jump{u_\infty-v}\mean{\nabla(f_\elm+\Delta
    v)\phi_\elm}\ds}\\
    \begin{aligned}
      &\le \sum_{S\in \sides^+,S\subset
        \elm}\int_{S}\jump{u_\infty-v}^2\ds\norm[L^\infty(\elm)]{\nabla(f_\elm+\Delta
    v)\phi_\elm}
        \\
      &\Cleq
      \Big(\int_{\sides^+}h_+^{{d-1}}\jump{u_\infty-v}^2\ds\Big)^{1/2}
      h_\elm^{-1-\frac{d}2}\norm[\elm]{f_\elm+\Delta
    v} 
  \\
  &\Cleq
  \Big(\int_{\sides^+}h_+^{{-1}}\jump{u_\infty-v}^2\ds\Big)^{1/2}
  h_\elm^{-1}\norm[\elm]{f_\elm+\Delta
    v}, 
    \end{aligned}
  \end{multline*}
  since ${h_+}\le h_\elm$ on $\elm$.
  Therefore, we arrive at 
  \begin{align}\label{eq:est_elm}
    \begin{aligned}
      \int_\elm h_k^2|f_\elm+\Delta v|^2\dx &\Cleq \norm[\elm]{\nablaG
        (u_\infty - v)}^2+ \sum_{S\in \sides^+,S\subset
        \elm}\int_\side h_+^{-1}\jump{u_\infty-v}^2\ds
      \\
      &\quad+h_\elm^2\norm[\elm]{f-f_\elm}^2.
    \end{aligned}
  \end{align}
  Thanks to the definition of $\gridk^0$, the same bound applies for
  all $\elm'\in\neighk(\elm)$.

  We now turn to investigate the jump terms. To this end, we fix
  one $\side\in\mathring\sides_k$, $\side\subset\elm$. Thanks to the definition of
  $\gridk^0$, there exists 
  a hat function $\phi_\side \in
  \V_\ell\cap C(\Omega)\cap H_0^1(\omegak(S))$,
  and for 
  $q\in\P_{r-1}(\side)$, let $\tilde
  q\in\P_{r-1}(\omegak(S))$ be some extension, such that
  \begin{align}\label{eq:phiS}
   h_\elm^d \norm[L^\infty(\omegak(S))]{\nabla\tilde q\phi_\side}\Cleq\norm[\omegak(S)]{\tilde q\phi_\side}^2\Cleq h_\elm \int_\side  |q|^2\ds.
  \end{align}
  Noting that $\jump{\nabla v}\in\P_{r-1}(S)$, we
  have, by the equivalence of norms on finite dimensional spaces, that 
  \begin{align*}
    \int_{S} \jump{\nabla
        v}^2\ds
      &\Cleq \int_{S} \jump{\nabla v}^2\phi_\side\ds
      \\
      &= 
      \bilin[\infty]{u_\infty-v}{\widetilde{\jump{\nabla
            v}}\phi_\side} - \int_{\omega_k(S)}
      (f+\Delta v)\widetilde{\jump{\nabla v}}\phi_\side\dx
      \\
      &\Cleq
      \norm[\omegak(S)]{\nablaG(u_\infty-v)}\norm[\omegak(S)]{\nabla\widetilde{\jump{\nabla
            v}}\phi_\side}
      \\
      &\quad + \int_{\sides^+} \jump{u_\infty-v}\mean{\nabla\widetilde{\jump{\nabla
            v}}\phi_\side}\ds
        \\
    &\quad+\big(\norm[\elm]{f+\Delta v}^2+\norm[\elm']{f+\Delta v}^2\big)^{\frac12}\norm[\omegak(S)]{\widetilde{\jump{\nabla
            v}}\phi_\side}.
  \end{align*}
   Similarly, as for the element residual, we have that 
  \begin{multline*}
    \int_{\sides^+} \jump{u_\infty-v}\mean{\nabla\widetilde{\jump{\nabla
            v}}\phi_\side}\ds
      \\
      \Cleq\Big(\sum_{\side'\in\sides^+,
    \side'\subset\omegak(S)}h_+^{-1}\jump{u_\infty-v}^2\Big)^{\frac12}
    \Big(\int_\side h_\elm\jump{\nabla v}^2\ds\Big)^{\frac12},
  \end{multline*}
using \eqref{eq:phiS}.  Combining this with \eqref{eq:phiS}, we obtain
  \begin{align*}
    \int_{S}h_\elm \jump{\nabla
        v}^2\ds
    &\Cleq\norm[\omegak(S)]{\nablaG(u_\infty-v)}^2+ \sum_{\side'\in\sides^+,
      \side'\subset\omegak(S)}\int_\side h_+^{-1}\jump{u_\infty-v}^2 \ds
      \\
    &\quad + h_\elm^2\norm[\elm]{f+\Delta v}^2+h_{\elm'}^2\norm[\elm']{f+\Delta
      v}^2. 
  \end{align*}
  Finally applying the bound \eqref{eq:est_elm} to
  $\elm,\elm'\in\neighk(\elm)$, we have proved the first assertion. 

  The second assertion follows, then, by summing over all
  $\elm\in\gridk^0$ together with an observation from
  \cite{MorinSiebertVeeser:08}, which we sketch here in order to keep the
  this work self-contained. Let
  $M:=\max\{\#\neighk(\elm):\elm\in\gridk^0\}$ 
  be the maximal number of neighbours, then $\gridk^0$ can be split
  into $M^2+1$ subsets $\grid_{k,0}^0,\ldots, \grid_{k,M^2}^0$ such
  that for each $j$, we have that $\elm',\elm\in \grid_{k,j}^0$ with
  $\elm\neq\elm'$ implies that $\neighk(\elm)\cap\neighk(\elm')=\emptyset$.
  Consequently, we have 
  \begin{align*}
    \sum_{\elm\in\gridk^0} \norm[\omegak(\elm)]{\nablaG
      (u_\infty-v)}^2 
    &\le  \sum_{j=0}^{M^2}\sum_{\elm\in\grid_{k,j}^0} \norm[\omegak(\elm)]{\nablaG
      (u_\infty-v)}^2
    \\
    &\le (M^2+1) \norm[\Omega_k^0]{\nablaG
      (u_\infty-v)}^2.
  \end{align*}  
  Together with similar estimates for the jump terms and the
  oscillations  the second assertion follows from the first one.
\end{proof}

\begin{thm}\label{Thm:uk->u8}
  Let $u_\infty$ the solution of~\eqref{eq:u8} and $u_k\in\Vk$ be the
\DGFEM approximation from \eqref{ipdg} on $\gridk$ for some $k\in\N$.
  Then, 
  \begin{align*}
    \enorm[k]{u_\infty-u_k}\to 0 \quad\text{as}~k\to\infty
  \end{align*}
  and in particular \(\norm[\Omega]{u_\infty-u_k}\to 0\) as \(k\to\infty\).
\end{thm}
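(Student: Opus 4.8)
The plan is to combine the (almost) best‑approximation estimate of Lemma~\ref{lem:conv1} with the a~posteriori bound of Lemma~\ref{lem:conv2}; together they give
\[
  \enorm[k]{u_\infty-u_k}\Cleq\enorm[\infty]{u_\infty-\Pi_ku_\infty}
  +\Big(\sum_{\elm\in\grid_k\setminus\grid_k^{3+}}\est_k(\Pi_ku_\infty,\elm)^2\Big)^{1/2}
\]
(for $u_k=\Pi_ku_\infty$ the claim is immediate from the first term), and it remains to let $k\to\infty$. The first term vanishes by Lemma~\ref{lem:Pik2}, so everything reduces to showing $\est_k(\Pi_ku_\infty,\grid_k\setminus\grid_k^{3+})\to0$. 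I would split the index set disjointly as $\grid_k\setminus\grid_k^{3+}=\grid_k^0\cup\grid_k^{\#}$ with $\grid_k^{\#}:=\grid_k\setminus(\grid_k^0\cup\grid_k^{3+})=\grid_k^\star\cup(\grid_k^{++}\setminus\grid_k^{3+})$, treating $\grid_k^0$ by the efficiency Proposition~\ref{prop:lower8} and $\grid_k^{\#}$ by a small‑volume argument. The $L^2$‑convergence then follows from the triangle inequality together with the Friedrichs inequalities of Corollaries~\ref{C:Friedrichs} and~\ref{C:FriedrichsVinfty} applied to $u_\infty-\Pi_ku_\infty\in\V_\infty$ and $\Pi_ku_\infty-u_k\in\Vk$, since $\enorm[\infty]{u_\infty-\Pi_ku_\infty}+\enorm[k]{\Pi_ku_\infty-u_k}\to0$.

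On $\grid_k^0$: every such element is eventually refined, hence contained in $\overline{\Omega\setminus\Omega_k^+}$, and none of its nodes has a neighbour in $\grid_k^{++}$; therefore Lemma~\ref{lem:Pik}\eqref{Pik:3} forces $\jump{\Pi_ku_\infty}\equiv0$ on $\partial\elm$ ($\Pi_ku_\infty$ being continuous across interior faces and vanishing on boundary faces). Thus the penalty part of $\est_k(\Pi_ku_\infty,\elm)$ drops out and $\sum_{\elm\in\grid_k^0}\est_k(\Pi_ku_\infty,\elm)^2$ coincides with the left‑hand side of Proposition~\ref{prop:lower8} summed over $\grid_k^0$, with $v=\Pi_ku_\infty\in\Vk$, so it is $\Cleq\enorm[\infty]{u_\infty-\Pi_ku_\infty}^2+\osc(\neighk(\grid_k^0),f)^2$. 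The first summand tends to $0$ by Lemma~\ref{lem:Pik2}; for the data oscillation, local quasi‑uniformity and Lemma~\ref{lem:Omegastar} give $\|h_k\|_{L^\infty(\Omega(\neighk(\grid_k^0)))}\Cleq\|h_k\chi_{\Omega_k^-}\|_{L^\infty(\Omega)}\to0$, whence $\osc(\neighk(\grid_k^0),f)^2\le\|h_k\|_{L^\infty(\Omega(\neighk(\grid_k^0)))}^2\norm{f}^2\to0$.

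On $\grid_k^{\#}$: by~\eqref{eq:MSV(4.15)} and the bootstrap from the proof of Lemma~\ref{lem:Omegastar} one has $|\Omega(\grid_k^{\#})|\to0$, and applying the same reasoning to one more layer of neighbours, $|\Omega(\neighk(\grid_k^{\#}))|\to0$. Since $\Pi_ku_\infty|_\elm\in\P_r(\elm)$ for all $\elm\in\grid_k$, inverse and scaled trace inequalities give
\[
  \sum_{\elm\in\grid_k^{\#}}\Big(\int_\elm h_k^2|f+\Delta\Pi_ku_\infty|^2\dx
  +\int_{\partial\elm\cap\Omega}h_k\jump{\nabla\Pi_ku_\infty}^2\ds\Big)
  \Cleq\int_{\Omega(\grid_k^{\#})}|f|^2\dx+\int_{\Omega(\neighk(\grid_k^{\#}))}|\nablaG\Pi_ku_\infty|^2\dx ,
\]
and both integrals vanish: the first by absolute continuity of $\int|f|^2$, the second because $\nablaG\Pi_ku_\infty\to\nablaG u_\infty$ in $L^2(\Omega)^d$ by Lemma~\ref{lem:Pik2}, so that the integrand is uniformly integrable while the domain has vanishing measure. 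For the penalty part, $\jump{\Pi_ku_\infty}$ is supported on $\sides_k^+\subset\sides^+$, where $h_k=h_+$; writing $\jump{\Pi_ku_\infty}=\jump{u_\infty}+\jump{\Pi_ku_\infty-u_\infty}$, the $(\Pi_ku_\infty-u_\infty)$‑contribution sums to $\Cleq\enorm[\infty]{u_\infty-\Pi_ku_\infty}^2\to0$, while the $u_\infty$‑contribution is bounded by the tail, over those $S\in\sides^+$ that are a face of some element of $\grid_k\setminus\grid_k^{3+}$, of the convergent series $\int_{\sides^+}h_+^{-1}\jump{u_\infty}^2\ds\le\bar\sigma^{-1}\enorm[\infty]{u_\infty}^2<\infty$; iterating Lemma~\ref{L:Nk=NK} (once for $\elm$ and once for its finitely many, eventually fixed neighbours) shows each fixed $S\in\sides^+$ is, for large $k$, a face only of elements of $\grid_k^{3+}$, so this tail vanishes. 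Collecting the estimates yields $\enorm[k]{u_\infty-u_k}\to0$.

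The principal obstacle is the negative power of $h$ in the penalty part of the estimator, which rules out a crude bound by the volume of its support: one must first use Lemma~\ref{lem:Pik}\eqref{Pik:3} to confine $\jump{\Pi_ku_\infty}$ to the faces of $\grid_k^+$ (where $h_k=h_+$ and the jump sum is a genuine convergent series), and then argue by a vanishing tail, exploiting that a face which is eventually never refined is also eventually surrounded only by elements that are never refined. A secondary, more technical point is the combinatorial bookkeeping behind $|\Omega(\grid_k^{\#})|\to0$ and $|\Omega(\neighk(\grid_k^{\#}))|\to0$, which requires iterating the neighbourhood‑propagation estimates underlying Lemma~\ref{lem:Omegastar} and~\eqref{eq:MSV(4.15)} once or twice more.
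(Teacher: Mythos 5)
Your proof is correct and follows essentially the same approach as the paper's: both combine Lemmas~\ref{lem:conv1}, \ref{lem:Pik2} and~\ref{lem:conv2}, split $\gridk\setminus\gridk^{3+}$ into $\gridk^0$ and its complement, treat $\gridk^0$ via Proposition~\ref{prop:lower8} together with the oscillation/mesh-size argument, and treat the complement via a small-volume/uniform-integrability argument for the residual and gradient-jump terms plus a vanishing-tail argument (built on Lemma~\ref{L:Nk=NK} and Proposition~\ref{prop:V}) for the penalty term. You make a few steps explicit that the paper leaves implicit — notably that the penalty contribution on $\gridk^0$ vanishes identically by Lemma~\ref{lem:Pik}\eqref{Pik:3}, and the neighbourhood-volume bookkeeping — but the decomposition, key lemmas, and overall structure are the same.
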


\begin{proof}
  Thanks to Lemma~\ref{lem:conv1}, Lemma~\ref{lem:Pik2} and
  Lemma~\ref{lem:conv2}, we have that 
  \begin{align*}
    \lim_{k\to\infty}\enorm[k]{u_\infty-u_k}^2&\Cleq
    \lim_{k\to\infty}\enorm[\infty]{u_\infty-v_k}^2+\sum_{\elm\in\grid_k\setminus\grid_k^{3+}}\est_k(v_k,\elm)^2
    \\
    &=\lim_{k\to\infty}\sum_{\elm\in\grid_k\setminus\grid_k^{3+}}\est_k(v_k,\elm)^2,
  \end{align*}
  where $v_k:=\Pi_ku_\infty$.
  We conclude
  from~\eqref{eq:MSV(4.15)} that 
  \begin{align*}
   \big|\Omega\setminus\big(\Omega_k^0\cup\Omega_k^{3+}\big)\big|&\le
   \big|\Omega\setminus(\Omega_k^0\cup\Omega_k^{++})\big|+|\Omega_k^{++}\setminus\Omega^{3+}_k| 
      \to 0 ,
  \end{align*}
  as $k\to\infty$. Indeed, for $k\in\N$, it
  follows from Lemma~\ref{L:Nk=NK} and 
  $\#\gridk[k]^{+}<\infty$, that there exists $K=K(k)$, such that 
  $\gridk[k]^{+}\subset\grid_K^{3+}$,
  i.e. $|\Omega^+\setminus\Omega^{3+}_K|\le
  |\Omega^+\setminus\Omega^{+}_k|\to0$ as $k\to\infty$.
  Thanks to monotonicity we conclude
  that $|\Omega^{++}_k\setminus\Omega^{3+}_k|\le|\Omega^{+}\setminus\Omega^{3+}_k| \to 0$ as $k\to\infty$.
  We next show that this implies
  \begin{align*}
    \sum_{\elm\in\grid_k\setminus(\grid_k^0\cup\grid_k^{3+})}
    \est_k(v_k,\elm)^2\to
    0. 
  \end{align*}
  Lemma~\ref{lem:Pik2} implies that
  $\enorm[\infty]{u_\infty-v_k}\to0$ and, thus, the interior
  residual and the gradient
  jumps part of the estimator vanish due to uniform
  integrability. Moreover, it follows from Proposition~\ref{prop:V} that
  \begin{align*}
    \int_{\sides(\gridk\setminus(\grid_k^0\cup\grid_k^{3+}))}
    h_k^{-1}\jump{v_k}^2\ds
    &\Cleq  
    \int_{\sides(\gridk\setminus\gridk^{3+})}
    h_k^{-1}\jump{u_\infty}^2\ds+\enorm[k]{u_\infty-v_k}^2
      \\
    &\le \int_{\sides(\grid^+\setminus\gridk^{3+})}
    h_+^{-1}\jump{u_\infty}^2\ds+\enorm[k]{u_\infty-v_k}^2.
  \end{align*}
  The last term on the right-hand side of the above estimate vanishes thanks to Lemma~\ref{lem:Pik2}. 
  Again, letting $K=K(k)$, such that 
  $\gridk^{+}\subset\grid_K^{3+}$, we have
  \begin{align*}
    \int_{\sides(\grid^+\setminus\gridk[K(k)]^{3+})}
    h_+^{-1}\jump{u_\infty}^2\ds\le \int_{\sides(\grid^+\setminus\gridk^{+})}
    h_+^{-1}\jump{u_\infty}^2\ds\to 0,\quad\text{as}~k\to\infty. 
  \end{align*}
  Thanks to monotonicity, we thus conclude $\int_{\sides(\grid^+\setminus\gridk^{3+})}
    h_+^{-1}\jump{u_\infty}^2\ds\to0$, as $k\to\infty$.

  On the remaining elements $\gridk^-$, it follows from Proposition~\ref{prop:lower8} 
  that 
  \begin{align*}
    \sum_{\elm\in\grid_k^0}\est_k(v_k,\elm)^2\Cleq
    \enorm[\infty]{u_\infty-v_k}^2+
    \sum_{\elm\in
\grid_k^0}\osc(\neighk(\elm),f)^2.
  \end{align*}
  The first term on the right-hand side vanishes due to Lemma~\ref{lem:Pik2}. For the
  second term we observe that $|\bigcup\{\omegak(\elm):\elm\in
  \gridk^0\}|\Cleq| \Omega_k^0|$, depending on the shape regularity of
  $\grid_0$ and, therefore, it vanishes since 
  \begin{align}\label{eq:oscG0}
    \norm[L^\infty(\Omega)]{h_k\chi_{\Omega_k^0}} \le
    \norm[L^\infty(\Omega)]{h_k\chi_{\Omega_k^-}} 
    \to 0 \quad\text{as}~k\to\infty,
  \end{align}
  thanks to Lemma~\ref{lem:Omegastar}.

  The second limit follows then from
    \begin{align*}
      \norm[\Omega]{u_\infty-u_k}&\le \norm[\Omega]{u_\infty-\Pi_k
                                   u_\infty}+\norm[\Omega]{\Pi_ku_\infty-u_k}
      \\
                                 &\Cleq \enorm[\infty]{u_\infty-\Pi_k
      u_\infty}+\enorm[k]{\Pi_ku_\infty-u_k},
    \end{align*}
  which vanishes due to the above observations.
  \end{proof}

\section{Proof of the main result}
\label{sec:convergence-est}

We are now in the position to prove that the error estimator
vanishes by splitting the estimator according to 
\begin{align}\label{eq:splitting}
  \gridk=\gridk^0\cup\gridk^{++}\cup\gridk^\star
\end{align}
and consider each part separately following the ideas of \cite{MorinSiebertVeeser:08}.
This in turn implies that the sequence of discontinuous Galerkin
  approximations produced by \ADGM indeed converges to the exact
  solution of \eqref{eq:elliptic}.

\begin{lem}\label{lem:Gk0}
  We have that 
  \begin{align*}
    \est_k(\gridk^0)\to 0 ,\quad\text{as}~k\to\infty.
  \end{align*}
\end{lem}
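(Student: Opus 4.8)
The plan is to control the $\gridk^0$--part of the estimator, evaluated at the discrete solution $\uk$, by the corresponding part evaluated at the quasi-interpolant $v_k:=\Pi_ku_\infty\in\Vk\cap\V_\infty$ (Lemma~\ref{lem:Pik}\eqref{Pik:5a}), and then to feed $v_k$ into the local lower bound of Proposition~\ref{prop:lower8}. The reason one cannot simply apply Proposition~\ref{prop:lower8} with $v=\uk$ is that $\uk\notin\V_\infty$ in general, so the term $\enorm[\infty]{u_\infty-\uk}$ appearing on its right-hand side need not even be finite; routing the argument through $v_k$, which \emph{does} lie in $\V_\infty$, is exactly what makes the bound usable. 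This structural point is the only genuine subtlety of the proof, everything else being routine inverse-estimate bookkeeping.

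First I would establish the reduction. Since $\uk$ and $v_k$ are both piecewise polynomials on $\gridk$, elementwise inverse estimates give $\norm[\elm]{h_k\Delta(\uk-v_k)}\Cleq\norm[\elm]{\nabla(\uk-v_k)}$, and scaled trace inequalities combined with inverse estimates give $\int_{\partial\elm}h_k\jump{\nabla(\uk-v_k)}^2\ds\Cleq\int_{\omegak(\elm)}|\nablaG(\uk-v_k)|^2\dx$. Writing $f+\Delta\uk=(f+\Delta v_k)+\Delta(\uk-v_k)$, applying the triangle inequality to the jump seminorms $v\mapsto(\int_{\partial\elm}h_k^{-1}\jump{v}^2\ds)^{1/2}$ and $v\mapsto(\int_{\partial\elm}h_k\jump{\nabla v}^2\ds)^{1/2}$, and summing over $\elm\in\gridk^0$ with the bounded overlap of the patches $\omegak(\elm)$, one arrives at
\begin{align*}
  \est_k(\gridk^0)\Cleq\Big(\sum_{\elm\in\gridk^0}\est_k(v_k,\elm)^2\Big)^{1/2}+\enorm[k]{\uk-v_k}.
\end{align*}
The perturbation is harmless: by Theorem~\ref{Thm:uk->u8} and Lemma~\ref{lem:Pik2},
\begin{align*}
  \enorm[k]{\uk-v_k}\le\enorm[k]{\uk-u_\infty}+\enorm[k]{u_\infty-\Pi_ku_\infty}\longrightarrow0\qquad\text{as }k\to\infty.
\end{align*}

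It then remains to show $\sum_{\elm\in\gridk^0}\est_k(v_k,\elm)^2\to0$, which is essentially the computation already present inside the proof of Theorem~\ref{Thm:uk->u8}. For $\elm\in\gridk^0$ one has $\neighk(\elm)\cap\gridk^{++}=\emptyset$ --- otherwise a face-neighbour $\elm'\in\gridk^{++}$ would force $\elm\in\neighk(\elm')\subset\grid^+$, contradicting that $\elm$ is refined --- so $\elm$ together with its face-neighbours lies in $\overline{\Omega\setminus\Omega_k^+}$, where by Lemma~\ref{lem:Pik}\eqref{Pik:3} (and \eqref{df:Pikz} near $\partial\Omega$) $v_k$ is continuous; hence $\jump{v_k}|_{\partial\elm}\equiv0$ and the penalty contribution to $\est_k(v_k,\elm)$ vanishes. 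Proposition~\ref{prop:lower8}, applied with $v=v_k\in\Vk$, therefore yields
\begin{align*}
  \sum_{\elm\in\gridk^0}\est_k(v_k,\elm)^2\Cleq\enorm[\infty]{u_\infty-v_k}^2+\sum_{\elm\in\gridk^0}\osc(\neighk(\elm),f)^2,
\end{align*}
whose first term tends to zero by Lemma~\ref{lem:Pik2}. For the oscillation term, every $\elm'\in\neighk(\elm)$ with $\elm\in\gridk^0$ belongs to $\gridk^-$, so $\osc(\neighk(\elm),f)\le\norm[L^\infty(\Omega)]{h_k\chi_{\Omega_k^-}}\,\norm[\Omega(\neighk(\elm))]{f}$; summing with the bounded overlap of the patches and using $\norm[L^\infty(\Omega)]{h_k\chi_{\Omega_k^0}}\le\norm[L^\infty(\Omega)]{h_k\chi_{\Omega_k^-}}\to0$ (Lemma~\ref{lem:Omegastar}, cf.\ \eqref{eq:oscG0}) together with $f\in L^2(\Omega)$ shows this term vanishes as well. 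Combining the two steps gives $\est_k(\gridk^0)\to0$. The main obstacle is not any individual estimate but precisely the structural issue flagged above: Proposition~\ref{prop:lower8} is informative only when tested against a $\V_\infty$-function, which is why the passage through $v_k=\Pi_ku_\infty$ (rather than directly through $\uk$) is unavoidable.
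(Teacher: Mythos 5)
Your proof is correct, and it is in fact a more careful version of the paper's own argument. The paper applies Proposition~\ref{prop:lower8} \emph{directly} with $v=\uk$, arriving at a right-hand side $\enorm[\infty]{u_\infty-\uk}^2+\sum_{\elm\in\gridk^0}\osc(\neighk(\elm),f)^2$, and then asserts that this vanishes ``thanks to Theorem~\ref{Thm:uk->u8}''. That theorem, however, only controls $\enorm[k]{u_\infty-\uk}$, and since $\uk\notin\V_\infty$ the quantity $\enorm[\infty]{u_\infty-\uk}$ --- which integrates jumps of $\uk$ over the typically finer skeleton $\sides^+$ --- is \emph{not} dominated by $\enorm[k]{u_\infty-\uk}$; as the note after Proposition~\ref{prop:lower8} itself records, it need not even be finite. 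You identified precisely this difficulty and circumvented it: you first trade $\uk$ for $v_k=\Pi_ku_\infty\in\Vk\cap\V_\infty$ by inverse/trace estimates, at the cost of the perturbation $\enorm[k]{\uk-v_k}$, which Theorem~\ref{Thm:uk->u8} and Lemma~\ref{lem:Pik2} make vanish, and only then invoke Proposition~\ref{prop:lower8}, for which the required limit $\enorm[\infty]{u_\infty-v_k}\to 0$ is supplied by Lemma~\ref{lem:Pik2}. This mirrors the way Proposition~\ref{prop:lower8} is actually used inside the proof of Theorem~\ref{Thm:uk->u8}, and is the safer route. Your treatment of the penalty part coincides in substance with the paper's: both rely on $\jump{\Pi_ku_\infty}$ vanishing on $\sides(\gridk^0)$ so that the penalty contribution reduces to $\enorm[k]{\uk-\Pi_ku_\infty}$.

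One small imprecision in your write-up: to conclude $\jump{v_k}|_{\partial\elm}\equiv 0$ from Lemma~\ref{lem:Pik}\eqref{Pik:3} you need $\omegak(\elm)\subset\Omega\setminus\Omega_k^{+}$, whereas your displayed argument only establishes $\neighk(\elm)\cap\gridk^{++}=\emptyset$. The stronger inclusion does hold --- indeed every $\elm'\in\neighk(\elm)$ with $\elm\in\gridk^0$ is, by definition of $\gridk^0$, refined at least $n\ge 1$ times and hence $\elm'\notin\gridk^{+}$ --- so your conclusion stands, but the intermediate step should cite this rather than the $\gridk^{++}$-disjointness.
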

\begin{proof}
  Thanks to Proposition~\ref{prop:lower8}, we have 
  \begin{multline*}
    \sum_{\elm\in\grid_k^0}\int_{\elm}h_k^2|f+\Delta
      \uk|^2\dx+\int_{\partial\elm\cap\Omega} h_k\jump{\nabla
        \uk}^2\ds
    \\
    \Cleq \enorm[\infty]{u_\infty-\uk}^2+\sum_{\elm\in\gridk^0}\osc(\neighk(\elm),f)^2.
  \end{multline*}
  The right-hand side vanishes thanks to Theorem~\ref{Thm:uk->u8}
  and~\eqref{eq:oscG0}. 

  It remains to prove that 
  \begin{align*}
    \int_{\sides(\gridk^0)}h_k^{-1}\jump{\uk}^2\ds\to0,\quad\text{as}~k\to\infty.
  \end{align*}
  By definition,
  $\Omega_k^0\subset\Omega\setminus\Omega_k^+$ and, thanks to
  Lemma~\ref{lem:Pik}\eqref{Pik:3}, we have that $\Pi_k u_\infty\in C(\Omega\setminus\Omega_k^+)$. 
  Therefore, we conclude
  \begin{align*}
    \int_{\sides(\gridk^0)}h_k^{-1}\jump{\uk}^2\ds&=
    \int_{\sides(\gridk^0)}h_k^{-1}\jump{\uk-\Pi_ku_\infty}^2\ds
                                                    \le \enorm[k]{\uk-\Pi_ku_\infty}\to0
  \end{align*}
  as $k\to\infty$; see Lemma~\ref{lem:Pik2} and Theorem~\ref{Thm:uk->u8}.
\end{proof}

\begin{lem}\label{lem:Gk*}
  We have that 
  \begin{align*}
    \lim_{k\to\infty}\est_k(\gridk^\star)=0.
  \end{align*}
\end{lem}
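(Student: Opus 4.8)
The plan is to follow the strategy of the proof of Lemma~\ref{lem:Gk0}, replacing $\uk$ inside the estimator by the quasi-interpolant $\Pi_k u_\infty\in\Vk\cap\V_\infty$ at the cost of a term which vanishes in the energy norm. Writing $w_k:=\uk-\Pi_k u_\infty\in\Vk$ (Lemma~\ref{lem:Pik}\eqref{Pik:2}), using $f+\Delta\uk=(f+\Delta\Pi_k u_\infty)+\Delta w_k$, the linearity of $\jump{\cdot}$, and the triangle inequality in $L^2$, one has for each $\elm\in\gridk$
\begin{equation*}
  \est_k(\elm)\le \est_k(\Pi_k u_\infty,\elm)
  +\Big(\int_\elm h_k^2|\Delta w_k|^2\dx
  +\int_{\partial\elm\cap\Omega}h_k\jump{\nabla w_k}^2\ds
  +\sigma\int_{\partial\elm}h_k^{-1}\jump{w_k}^2\ds\Big)^{1/2}.
\end{equation*}
Since $w_k\in\Vk$, the three terms under the square root are each $\Cleq\enorm[\neighk(\elm)]{w_k}^2$ — by an inverse estimate, by a scaled trace inequality, and trivially, respectively. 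Summing over $\elm\in\gridk^\star$ and using the uniformly bounded overlap of the patches $\omegak(\elm)$ yields
\begin{equation*}
  \est_k(\gridk^\star)\Cleq \est_k(\Pi_k u_\infty,\gridk^\star)+\enorm[k]{w_k},
\end{equation*}
and $\enorm[k]{w_k}\le\enorm[k]{u_\infty-\uk}+\enorm[k]{u_\infty-\Pi_k u_\infty}\to 0$ as $k\to\infty$ by Theorem~\ref{Thm:uk->u8} and Lemma~\ref{lem:Pik2}.

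It therefore remains to show $\est_k(\Pi_k u_\infty,\gridk^\star)\to 0$. First I would record that $\gridk^{3+}\subset\gridk^{++}$: since $\elm\in\neighk(\elm)$, the defining condition $\neighk(\elm)\subset\gridk^{++}$ of $\gridk^{3+}$ already forces $\elm\in\gridk^{++}$. Hence
\begin{equation*}
  \gridk^\star=\gridk\setminus(\gridk^{++}\cup\gridk^0)\subset\gridk\setminus\gridk^{3+},
\end{equation*}
so that $\est_k(\Pi_k u_\infty,\gridk^\star)\le\est_k(\Pi_k u_\infty,\gridk\setminus\gridk^{3+})$. That $\sum_{\elm\in\gridk\setminus\gridk^{3+}}\est_k(\Pi_k u_\infty,\elm)^2\to 0$ was, however, already established inside the proof of Theorem~\ref{Thm:uk->u8}: there this sum is split into the contribution over $\gridk^0$, bounded through Proposition~\ref{prop:lower8} by $\enorm[\infty]{u_\infty-\Pi_k u_\infty}^2$ plus the data oscillation over $\Omega_k^0$ — both vanishing by Lemma~\ref{lem:Pik2} and \eqref{eq:oscG0} — and the contribution over $\gridk\setminus(\gridk^0\cup\gridk^{3+})$, which vanishes since $|\Omega\setminus(\Omega_k^0\cup\Omega_k^{3+})|\to 0$ (using Proposition~\ref{prop:V} and Lemma~\ref{lem:Pik2}). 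Combining the two displays gives $\est_k(\gridk^\star)\to 0$.

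The only genuinely delicate part of the estimator is, as throughout this work, the discontinuity-penalisation term $\sigma\int_{\sides(\gridk^\star)}h_k^{-1}\jump{\,\cdot\,}^2\ds$, which carries the negative power of the mesh size; this is exactly why $\est_k(\gridk^\star)$ cannot be controlled directly from $|\Omega(\gridk^\star)|\to 0$ (cf.\ \eqref{eq:MSV(4.15)}), as would suffice for the conforming counterpart~\cite{MorinSiebertVeeser:08}. What makes the argument go through is that $\Pi_k u_\infty$ does not jump across any face outside $\sides_k^+$ by Lemma~\ref{lem:Pik}\eqref{Pik:3}, that $h_k=h_+$ on $\sides_k^+$, and that consequently the penalisation energy of $\Pi_k u_\infty$ over $\gridk\setminus\gridk^{3+}$ is dominated — up to the vanishing perturbation $\enorm[\infty]{u_\infty-\Pi_k u_\infty}^2$ — by the tail of the finite quantity $\int_{\sides^+}h_+^{-1}\jump{u_\infty}^2\ds\le\bar\sigma^{-1}\enorm[\infty]{u_\infty}^2$. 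Since this mechanism has already been put to work in Theorem~\ref{Thm:uk->u8} and in Lemma~\ref{lem:Gk0}, the present lemma requires no new estimate, only the triangle-inequality bookkeeping above and the inclusion $\gridk^\star\subset\gridk\setminus\gridk^{3+}$; accordingly I do not expect any real obstacle here.
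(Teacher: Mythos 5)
Your proof is correct, but it takes a genuinely different route from the paper's. Both arguments hinge on the elementary but crucial inclusion $\gridk^{3+}\subset\gridk^{++}$ (and hence $\gridk^\star\subset\gridk\setminus\gridk^{3+}$), and both have to deal separately with the negative power of $h$ in the discontinuity-penalisation term. Where they diverge is in which intermediate results do the work.

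The paper first attacks the interior-residual and gradient-jump parts of $\est_k(\gridk^\star)$ via the classical local \emph{lower} bound, Proposition~\ref{prop:lower}, expressed in terms of the exact solution $u$; it then splits $u-\uk$ into contributions from $u$, $u_\infty-\uk$ and $u_\infty$, and concludes from $|\Omega_k^\star|\to 0$ (cf.~\eqref{eq:MSV(4.15)}), Theorem~\ref{Thm:uk->u8}, and uniform integrability. The penalisation jump is treated separately: the paper splits $\jump{\uk}$ into $\jump{\uk-\Pi_ku_\infty}$ and $\jump{\Pi_ku_\infty}$, uses $\enorm[k]{\uk-\Pi_ku_\infty}\to0$ for the first, and for the second subtracts a later quasi-interpolant $\Pi_\ell u_\infty$ (for $\ell=\ell(k)$ such that $\gridk^+\subset\gridk[K(k)]^{++}$ and $\gridk[K(k)]^+\subset\gridk[\ell(k)]^{++}$), whose jumps vanish on $\sides(\gridk^\star)$ by Lemma~\ref{lem:Pik}\eqref{Pik:3}, to get $\jump{\Pi_ku_\infty}=\jump{\Pi_ku_\infty-\Pi_\ell u_\infty}$, which is then controlled by Lemma~\ref{lem:Pik2}.

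You instead pass the \emph{whole} estimator from $\uk$ to $\Pi_ku_\infty$ in one step, using $\est_k(\uk,\elm)\le\est_k(\Pi_ku_\infty,\elm)+(\text{perturbation})$ with the perturbation absorbed by inverse/trace inequalities into $\enorm[\neighk(\elm)]{\uk-\Pi_ku_\infty}$, summing with finite overlap, and then you observe that $\sum_{\elm\in\gridk\setminus\gridk^{3+}}\est_k(\Pi_ku_\infty,\elm)^2\to 0$ was already established inside the proof of Theorem~\ref{Thm:uk->u8}. This buys you economy — you reuse the monotonicity and uniform-integrability work done there instead of redoing it — at the cost of citing an intermediate claim from another proof rather than a self-contained statement. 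For a paper that keeps proofs self-contained, the right way to incorporate your idea would be to extract that intermediate convergence $\sum_{\elm\in\gridk\setminus\gridk^{3+}}\est_k(\Pi_ku_\infty,\elm)^2\to 0$ as a named auxiliary lemma, after which your present lemma (and the corresponding step inside Theorem~\ref{Thm:uk->u8}) become two short corollaries.

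One cosmetic remark: your statement that each of the three perturbation terms ``is $\Cleq\enorm[\neighk(\elm)]{w_k}^2$ — by an inverse estimate, by a scaled trace inequality, and trivially'' should spell out that the middle one requires the jump decomposition $\abs{\jump{\nabla w_k}}^2\Cleq|\nabla w_k^+|^2+|\nabla w_k^-|^2$ before the trace/inverse estimate, so it genuinely touches both elements of $\neighk(\elm)$ adjacent to a face, and that the third one uses $\sigma\le\bar\sigma$. These are minor bookkeeping points; the argument stands.
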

\begin{proof}
  We conclude from the lower bound (Proposition~\ref{prop:lower}) that 
  \begin{multline*}
    \sum_{\elm\in\gridk^\star}\int_{\elm}h_k^2|f+\Delta
      \uk|^2\dx+\int_{\partial\elm} h_k\jump{\nabla
        \uk}^2\ds
      \\
      \begin{split}
        &\Cleq
        \sum_{\elm\in\gridk^\star}\norm[\omega_k(\elm)]{u-\uk}^2+\norm[\omega_k(\elm)]{\nabla
          u-\nablaG\uk}^2+\osc(\neighk(\elm),f)^2
        \\
        &\Cleq
        \sum_{\elm\in\gridk^\star}\Big\{\norm[\omega_k(\elm)]{u}^2+\norm[\omega_k(\elm)]{u_\infty-\uk}^2+\norm[\omega_k(\elm)]{u_\infty}^2
        \\
        &\qquad+\norm[\omega_k(\elm)]{\nabla
          u}^2+\norm[\omega_k(\elm)]{\nablaG
          u_\infty-\nablaG\uk}^2+\norm[\omega_k(\elm)]{\nablaG
          u_\infty}^2 \\
        &\qquad+\osc(\neighk(\elm),f)^2\Big\}.
      \end{split}
  \end{multline*}
  This vanishes as $k\to\infty$ thanks to Theorem~\ref{Thm:uk->u8} and~\eqref{eq:MSV(4.15)},
  together with the uniform integrability of the terms involving $u$
  and $u_\infty$. Note that
  $\big|\bigcup\{\omegak(\elm):\elm\in\gridk^\star\}\big|\Cleq
  |\Omega_k^\star|$, with the constant depending on the shape regularity of $\grid_0$.

  It remains to prove
  \begin{align*}
    \int_{\sides(\gridk^\star)}h_k^{-1}\jump{\uk}^2\ds\to0,\quad\text{as}~k\to\infty.
  \end{align*}
  To this end, we observe that
  \begin{align*}
   \int_{\sides(\gridk^\star)}h_k^{-1}\jump{\uk}^2\ds&=
    \int_{\sides(\gridk^\star)}h_k^{-1}\jump{\uk-\Pi_ku_\infty}^2\ds+\int_{\sides(\gridk^\star)}h_k^{-1}\jump{\Pi_ku_\infty}^2\ds
    \\
    &\le \frac1{\bar\sigma}\enorm[k]{\uk-\Pi_ku_\infty}^2+\int_{\sides(\gridk^\star)}h_k^{-1}\jump{\Pi_ku_\infty}^2\ds.
  \end{align*}
  As in the proof of Lemma~\ref{lem:Gk0}, we have that the first term
  vanishes as $k\to\infty$. Thanks to Lemma~\ref{L:Nk=NK}, there
  exists $\ell(k)\ge K(k)\ge k$ such that
  $\gridk^+\subset\gridk[K(k)]^{++}$ and
  $\gridk[K(k)]^+\subset\gridk[\ell(k)]^{++}$.
  Consequently, we have that $\jump{\Pi_\ell u_\infty}|_\side =0$ for all
  $S\in \gridk$; see Lemma~\ref{lem:Pik}\eqref{Pik:3}.  
  Therefore, we conclude from Lemma~\ref{lem:Pik2} that
  \begin{align*}
    \sigma\int_{\sides(\gridk^\star)}h_k^{-1}\jump{\Pi_ku_\infty}^2\ds&=\sigma\int_{\sides(\gridk^\star)}h_k^{-1}\jump{\Pi_ku_\infty-\Pi_\ell
    u_\infty}^2\ds
    \\
    &\Cleq \enorm[k]{\Pi_ku_\infty-
    u_\infty}^2+\enorm[\ell]{u_\infty-\Pi_\ell
    u_\infty}^2\to0,
  \end{align*}
  as $k\to\infty$.
\end{proof}

\begin{lem}\label{lem:Gk+}
  We have 
  \begin{align*}
    \est_k(\gridk^{++})\to0\quad\text{as}~k\to\infty.
  \end{align*}
\end{lem}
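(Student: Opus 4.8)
The plan is to combine the marking property~\eqref{eq:mark} with the two previous lemmas in order to identify $u_\infty$ as a \emph{classical} solution of the Poisson problem on the stable mesh $\grid^+$, and then to transfer this to $\uk$ via the convergence $\uk\to u_\infty$ from Theorem~\ref{Thm:uk->u8}. Throughout we use that $\gridk^{++}\subset\gridk^+=\gridk\cap\grid^+$, that $\hG[k]=\hG[+]$ on $\grid^+$, and the disjoint splitting $\gridk=\gridk^0\cup\gridk^{++}\cup\gridk^\star$ from~\eqref{eq:splitting}.

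First I would show $g(\est_k(\markedk))\to0$. Since every marked element is refined and hence removed from all later triangulations (never to reappear under newest vertex bisection), $\markedk\cap\grid^+=\emptyset$, so $\markedk\subset\gridk\setminus\gridk^{++}=\gridk^0\cup\gridk^\star$. Therefore $\est_k(\markedk)^2\le\est_k(\gridk^0)^2+\est_k(\gridk^\star)^2\to0$ by Lemmas~\ref{lem:Gk0} and~\ref{lem:Gk*}, whence $g(\est_k(\markedk))\to0$ because $g$ is continuous at $0$ with $g(0)=0$. As $\gridk^{++}$ is disjoint from $\markedk$, condition~\eqref{eq:mark} yields $\est_k(\elm)\le g(\est_k(\markedk))$ for every $\elm\in\gridk^{++}$; in particular, for each fixed $\elm\in\grid^+$ — which lies in $\gridk^{++}$ for all large $k$ by Lemma~\ref{L:Nk=NK} — we obtain $\est_k(\uk,\elm)\to0$.

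Next I would pass to the limit cell by cell. For fixed $\elm\in\grid^+$ the patch $\neighk(\elm)$ stabilises for large $k$ to a fixed finite collection of simplices of $\grid^+$ on which both $\uk$ and $u_\infty$ are piecewise polynomials of degree $r$. Since $\enorm[k]{u_\infty-\uk}\to0$ and $\norm[\Omega]{u_\infty-\uk}\to0$ (Theorem~\ref{Thm:uk->u8}), equivalence of norms on this fixed finite-dimensional space gives $\Delta\uk\to\Delta u_\infty$ in $L^2(\elm)$ and $\jump{\nabla\uk}\to\jump{\nabla u_\infty}$, $\jump{\uk}\to\jump{u_\infty}$ in $L^2(\partial\elm)$, so $\est_k(\uk,\elm)\to\est_k(u_\infty,\elm)$. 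Combined with $\est_k(\uk,\elm)\to0$ this forces $\est_k(u_\infty,\elm)=0$, i.e. $\Delta u_\infty=-f$ on $\elm$, $\jump{\nabla u_\infty}|_{\partial\elm\cap\Omega}\equiv0$, and (when $\sigma>0$) $\jump{u_\infty}|_{\partial\elm}\equiv0$. As this holds for every $\elm\in\grid^+$, we conclude $\est_k(u_\infty,\gridk^{++})=0$ for all $k$.

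Finally I would use the (locally Lipschitz) dependence of the indicator on its first argument. For $\elm\in\gridk^{++}$ the triangle inequality in the product $L^2$-norm defining $\est_k(\cdot,\elm)$ together with standard inverse and scaled trace estimates on the locally quasi-uniform stable patch $\neighk(\elm)\subset\grid^+$ give $\est_k(\uk,\elm)\le\est_k(u_\infty,\elm)+C\enorm[\neighk(\elm)]{\uk-u_\infty}$; here the inverse/trace constants depend only on $r$, $d$ and the shape regularity of $\grid_0$, and one uses $\bar\sigma=\max\{1,\sigma\}$ to absorb the jump term. Squaring, summing over $\gridk^{++}$, and invoking the bounded overlap of $\{\neighk(\elm)\}$ yields $\est_k(\gridk^{++})^2\Cleq\est_k(u_\infty,\gridk^{++})^2+\enorm[k]{\uk-u_\infty}^2=\enorm[k]{\uk-u_\infty}^2\to0$ by Theorem~\ref{Thm:uk->u8}. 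The main obstacle is the third step: establishing that $u_\infty$ is a genuine classical solution on $\grid^+$. This is the only place where the marking strategy truly enters the proof (through $\est_k(\markedk)\to0$), and it requires the indicator to localise well enough that the limit $\uk\to u_\infty$ can be taken separately on each stable element — which is why the argument is run elementwise on $\grid^+$ rather than globally.
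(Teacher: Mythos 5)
Your proof is correct, and it takes a genuinely different route from the paper's in Steps~2--4. The first step (showing $\markedk\subset\gridk^0\cup\gridk^\star$, hence $g(\est_k(\markedk))\to0$ by Lemmas~\ref{lem:Gk0} and~\ref{lem:Gk*}, and using the marking property~\eqref{eq:mark} to deduce $\sup_{\elm\in\gridk^{++}}\est_k(\elm)\to0$) coincides with the paper's Step~1. After that the two arguments diverge. The paper constructs, element by element on $\grid^+$, an integrable majorant of the indicators via the local lower bound (Proposition~\ref{prop:lower}) applied with the \emph{exact} solution $u$, shows the majorants converge in $L^1(\Omega^+)$ using Theorem~\ref{Thm:uk->u8}, and then invokes a generalised dominated convergence theorem \cite[Appendix (19a)]{Zeidler:90} to upgrade the pointwise decay of $\est_k(\elm)$ to convergence of $\est_k(\gridk^{++})$. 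You instead extract from Step~1 together with the elementwise convergence $\uk\to u_\infty$ (valid on each fixed $\elm\in\grid^+$ because there one is in a fixed finite-dimensional polynomial space) the qualitative conclusion that $f+\Delta u_\infty=0$ on $\elm$, $\jump{\nabla u_\infty}=0$ on $\partial\elm\cap\Omega$ and $\jump{u_\infty}=0$ on $\partial\elm$, i.e.\ $\est_k(u_\infty,\gridk^{++})=0$; then a local Lipschitz estimate of $\est_k(\cdot,\elm)$ (via inverse and trace inequalities on the stable patch $\neighk(\elm)\subset\grid^+$, with $\bar\sigma\ge\sigma$ to absorb the penalty term) gives $\est_k(\uk,\elm)\Cleq\enorm[\neighk(\elm)]{\uk-u_\infty}$, and summing with bounded patch overlap yields $\est_k(\gridk^{++})^2\Cleq\enorm[k]{\uk-u_\infty}^2\to0$ by Theorem~\ref{Thm:uk->u8}. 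Your route avoids the dominated convergence machinery entirely and is arguably more elementary; it also makes explicit a structural fact the paper leaves implicit, namely that on the stable mesh $\grid^+$ the limit $u_\infty$ is a strong (elementwise) solution and in particular $f|_\elm$ must be a polynomial of degree $\le r-2$ for every $\elm\in\grid^+$ (i.e.\ the data oscillation on $\grid^+$ vanishes). The paper's route, by contrast, does not need the intermediate identification $\est(u_\infty,\elm)=0$ and works purely with upper bounds for $\est_k(\elm)$, which is perhaps more robust under perturbations of the indicator, but at the cost of the auxiliary machinery of Steps~2--4. Both are sound.
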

\begin{proof}\textbf{Step 1:} By definition, elements in $\gridk^{++}$ will not be
  subdivided, i.e. we have that
  $\mathcal{M}_k\subset\gridk\setminus\gridk^{++}$; compare
  with~\eqref{eq:refine}. As a consequence of Lemmas~\ref{lem:Gk0}
  and~\ref{lem:Gk*},
  we conclude from~\eqref{eq:mark} for all
  $\elm\in\gridk^{++}$
  that 
  \begin{align}\label{eq:E->0}
    \est_k(\elm)\le \lim_{k\to\infty}g(\est_k(\mathcal{M}_k))=
    \lim_{k\to\infty} g(\est_k(\gridk^-\cup\grid_k^\star))\to0,
  \end{align}
  as $k\to\infty$. We shall reformulate the above element-wise convergence
  in an integral framework, in order to conclude $\est_k(\gridk^{++})\to 0$
  as $k\to\infty$ via a generalised version of the dominated
  convergence theorem. To this end, we shall consider some
  properties of the error indicators.
  
  \textbf{Step 2:}
  Thanks to the definition of $\gridk^{++}$, we have for all
  $\elm\in\gridk^{++}$, that
  $\omega_k(\elm)=\omega_\ell(\elm)=:\omega(\elm)$ and
  $\neighk(\elm)=\neighk[\ell](\elm)=\neigh(\elm)$ for all $\ell\ge k$.
  Therefore, we obtain by the lower bound,
  Proposition~\ref{prop:lower}, that 
  \begin{align}\label{eq:estBND}
    \begin{aligned}
      \est_k(\elm)^2&\Cleq
      \enorm[\neigh(\elm)]{\uk-u}^2+\osc(\neigh(\elm),f)^2
      \\
      &\Cleq
      \enorm[\neigh(\elm)]{\uk-u_\infty}^2+\norm[\neigh(\elm)]{u_\infty}^2+\norm[H^1(\omega(\elm))]{u}^2
      +\norm[\omega(\elm)]{f}^2
      \\
      &=:\enorm[\neigh(\elm)]{\uk-u_\infty}^2+C_\elm^2.
    \end{aligned}
  \end{align}
  Arguing as in the proof of Proposition~\ref{prop:lower8}, 
  we can conclude from the local estimate that
  \begin{align}\label{eq:CEBND}
    \sum_{\elm\in\gridk^{++}}C_\elm^2 \Cleq \enorm[\infty]{u_\infty}^2+\norm[H^1(\Omega)]{u}^2+\|f\|_{\Omega}^2<\infty
  \end{align}
  independently of $k$.
  
  \textbf{Step 3:} We shall now reformulate $\est_k(\gridk^{++})$ in
  integral form. Note that thanks to Lemma~\ref{L:Nk=NK}, we have that
  $\grid^+=\bigcup_{k\in\N_0}\gridk^+=\bigcup_{k\in\N_0}\gridk^{++}$,
  and also that the sequence $\{\gridk^{++}\}_{k\in\N_0}$ is nested. For
  $x\in\Omega^+$, let 
  \begin{align*}
    \ell=\ell(x):=\min\{k\in\N_0: ~\text{there
    exists}~\elm\in\grid_k^{++}~\text{such that}~x\in\elm\}.
  \end{align*}
  Then, we define 
  \begin{gather*}
    \epsilon_k(x):=M_k(x):=0\quad\text{for}~k<\ell,
    \intertext{and}
    \epsilon_k(x):=\frac1{|\elm|}\est_k^2(\elm),\qquad M_k:=\frac1{|\elm|}\Big(\enorm[\neigh(\elm)]{\uk-u_\infty}^2+C_\elm^2\Big)\quad\text{for}~k\ge\ell.
  \end{gather*}
  Consequently, for any $k\in\N_0$, we have 
  \begin{align*}
    \est_k(\gridk^{++})^2=\int_{\Omega^+}\epsilon_k(x)\dx.
  \end{align*}
  Moreover, thanks to the fact that the sequence
  $\{\gridk^{++}\}_{k\in\N_0}$ is nested, we conclude from~\eqref{eq:E->0} that 
  \begin{align*}
    \lim_{k\to\infty}\epsilon_k(x)=\lim_{k\to\infty} \frac1{|\elm|}\est_k^2(\elm)=0.
  \end{align*}
  It follows from \eqref{eq:estBND} and \eqref{eq:CEBND}
  that $M_k$ is an integrable majorant for $\epsilon_k$.

\textbf{Step 4:} We shall show that the majorants $\{M_k\}_{k\in\N_0}$
converge in $L^1(\Omega^+)$ to 
\begin{align*}
  M(x):=\frac1{|\elm|}C_\elm^2, \quad\text{for}~x\in
  \elm\quad\text{and}\quad\elm\in\grid^+. 
\end{align*}
Then the assertion follows from a generalised majorised convergence
theorem; see \cite[Appendix (19a)]{Zeidler:90}. 
In fact, by the definition of $M_k$, we have that 
\begin{align*}
  \norm[L^1(\Omega^+)]{M_k-M}=\sum_{\elm\in\gridk^{++}}\norm[L^1(\elm)]{M_k-M}+\sum_{\elm\in\grid^+\setminus\gridk^{++}}\norm[L^1(\elm)]{M}. 
\end{align*}
The latter term vanishes since it is the tail of a converging series (compare
with~\eqref{eq:CEBND}) and for the former term, we have, thanks to
Theorem~\ref{Thm:uk->u8}, that
\begin{align*}
  \sum_{\elm\in\gridk^{++}}\norm[L^1(\elm)]{M_k-M}=\sum_{\elm\in\gridk^{++}}\enorm[\neigh(\elm)]{\uk-u_\infty}^2\Cleq
  \enorm[k]{u_k-u_\infty}\to 0
\end{align*}
as $k\to\infty$.
\end{proof}

\begin{proof}[Proof of Theorem~\ref{thm:main}]
  The assertion follows from Proposition~\ref{prop:upper} together
  with Lemmas~\ref{lem:Gk0}, \ref{lem:Gk*}, and
  \ref{lem:Gk+} recalling
  the splitting~\eqref{eq:splitting}.
\end{proof}

\section*{Acknowledgement}
We thank the anonymous referee of  the paper
\cite{DominicusGaspozKreuzer2019} for finding a highly non-trivial
counterexample to the first statement in
\cite[Lemma~11]{DominicusGaspozKreuzer2019}, which lead to this
corrected version of \cite{KreuzerGeorgoulis:17}.

\iftrue
\providecommand{\bysame}{\leavevmode\hbox to3em{\hrulefill}\thinspace}
\providecommand{\MR}{\relax\ifhmode\unskip\space\fi MR }
\providecommand{\MRhref}[2]{%
  \href{http://www.ams.org/mathscinet-getitem?mr=#1}{#2}
}
\providecommand{\href}[2]{#2}


\end{document}

\else

\bibliographystyle{amsalpha} 
\bibliography{ADG-convergence}
\fi

\end{document}